\theoremstyle{plain}
\newtheorem{thm}{Theorem}[section]
\newtheorem{lem}[thm]{Lemma}
\newtheorem{pro}[thm]{Proposition}
\newtheorem{cor}[thm]{Corollary}
\newtheorem*{theorema}{Theorem B}
\newtheorem*{theoremb}{Theorem A}
\newtheorem*{theoremc}{Theorem C}
\newtheorem*{theoremd}{Theorem D}
\theoremstyle{remark}\theoremstyle{remark}
\newtheorem*{rem*}{Remark}
\theoremstyle{definition}
\newtheorem{definition}{Definition}
\numberwithin{equation}{section}
\newcommand{\Z}{\mathbb{Z}}
\def\G{\mathcal{G}}
\newcommand{\ackn}{  \noindent{\sc Acknowledgement }\hspace{5pt} }
\begin{document}

\title[Subgroup properties of pro-$p$ extensions of centralizers]{Subgroup properties of pro-$p$ extensions of centralizers}

\author{Ilir Snopce}
\address{Universidade Federal do Rio de Janeiro\\
  Instituto de Matem\'atica \\
  20785-050 Rio de Janeiro, RJ, Brasil }
\email{ilir@im.ufrj.br}

\thanks{This research was partially supported by CNPq}

\author{Pavel A. Zalesskii}
\address{Universidade de Bras\'ilia \\
  Departamento de Matem\'atica\\
  70910-900 Bras\'ilia, DF, Brasil }
\email{pz@mat.unb.br}

\begin{abstract}   We prove that a finitely generated pro-$p$ group acting on a pro-$p$ tree $T$ with procyclic edge
stabilizers is the fundamental pro-$p$ group of a finite graph of
pro-$p$ groups with edge and vertex groups being stabilizers of
certain vertices and edges of $T$ respectively, in the following
two situations: 1) the action is {\it $n$-acylindrical}, i.e., any
non-identity element fixes not more than $n$ edges; 2)  the group
$G$ is generated by its vertex stabilizers.  This theorem is
applied to obtain several results about pro-$p$ groups from the
class $\mathcal{L}$ defined and studied in \cite{Kochloukova2} as
pro-$p$ analogues of limit groups.  We prove that every pro-$p$
group $G$ from the class $\mathcal{L}$  is the fundamental pro-$p$ group of a finite graph of
pro-$p$ groups with infinite procyclic or trivial edge groups and
finitely generated vertex groups; moreover, all
 non-abelian  vertex groups are from the class
$\mathcal{L}$ of lower level than $G$ with respect to the natural hierarchy. This allows us to
give an affirmative answer to questions 9.1 and 9.3 in \cite{Kochloukova2}. Namely, we prove that a group $G$ from the class
 $\mathcal{L}$  has Euler-Poincar\'e characteristic zero if and only if it is abelian, and if every abelian pro-$p$ subgroup
 of $G$ is procyclic and $G$ itself is not procyclic, then $\textrm{def}(G)\geq 2$. Moreover, we prove that $G$ satisfies the Greenberg-Stallings property and any finitely generated
non-abelian subgroup
of $G$ has finite index in its commensurator.\\
  \indent  We also show that all non-solvable Demushkin groups satisfy the Greenberg-Stallings property and each of their finitely generated non-trivial subgroups has finite index in its commensurator.
\end{abstract}

\subjclass[2010]{Primary 20E18; Secondary 20E06, 20E08.}

\maketitle

\section{Introduction}

The main structure theorem of the Bass-Serre theory states that a
group $G$ acting on a tree $T$ is the fundamental group of a graph
of groups whose vertex and edge groups are the stabilizers of
certain vertices and edges of $T$. This means that $G$ can be
described by taking iterated amalgamated free products and HNN
extensions. The analogue of the structure theorem in the pro-$p$
case does not hold in general \cite{HZ:10}. Nevertheless, it was
proved in \cite{HZZ} that every finitely generated infinite
pro-$p$ group that acts virtually freely on some pro-$p$ tree $D$
is isomorphic to the fundamental pro-$p$ group of a finite graph
of finite $p$-groups whose edge and vertex groups are isomorphic
to the stabilizers of some edges and vertices of $D$. The first
objective of our paper is to prove  that such a pro-$p$ version of
the Bass-Serre theory structure theorem  holds for finitely
generated  pro-$p$ groups acting on a pro-$p$ tree with cyclic
edge stabilizers in any of the following two situations:

1) the action is {\it $n$-acylindrical}, i.e.,  any non-identity
element fixes not more than $n$ consecutive edges;

2)  the group $G$ is generated by its vertex stabilizers.

\begin{theoremb}\label{StructureThmGen}
Let $G$ be a finitely generated pro-$p$ group acting on a pro-$p$
tree $T$ with procyclic edge stabilizers. Suppose that either the
action is $n$-acylindrical or $G$ is generated by its vertex
stabilizers. Then $G$ is the
 fundamental pro-$p$ group of a finite graph of
pro-$p$ groups $(\G, \Gamma)$ with procyclic edge groups and
finitely generated vertex groups. Moreover, the vertex and edge
groups of $(\G, \Gamma)$ are  stabilizers of certain vertices and
edges of $T$ respectively, and stabilizers of vertices and edges
of $T$ in $G$ are conjugate to subgroups of vertex and edge groups
of $(\G, \Gamma)$ respectively.
\end{theoremb}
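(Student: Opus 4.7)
The plan is to reduce to the situation where the quotient graph $G\backslash T$ is finite, and then invoke the known reconstruction theorem for finitely generated pro-$p$ groups acting on a pro-$p$ tree with finite quotient graph: such a $G$ is canonically isomorphic to the fundamental pro-$p$ group of a finite graph of pro-$p$ groups $(\G,\Gamma)$ whose edge and vertex groups are, respectively, the stabilizers in $G$ of a chosen lift of $\Gamma$ into $T$. Once this reduction is made, the procyclicity of edge groups of $(\G,\Gamma)$ is immediate from the hypothesis, and the conjugacy statement for stabilizers of arbitrary vertices and edges of $T$ follows from the fact that any two $G$-translates of a chosen lift are $G$-conjugate.

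Before the main step, I would normalize the setup by replacing $T$ with a minimal $G$-invariant pro-$p$ subtree. If $G$ has a global fixed vertex the conclusion holds trivially with a single-vertex graph, so I may assume $G$ acts without global fixed points. This reduction does not affect the stabilizer conclusions of the theorem.

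The heart of the proof, and the step I expect to be the main obstacle, is showing that $G\backslash T$ is a finite (profinite) graph. I would split into the two cases. In the $n$-acylindrical case, the procyclicity of edge stabilizers together with the uniform bound that no nontrivial element fixes more than $n$ consecutive edges should force, via a compactness argument on the finitely generated pro-$p$ group $G$, that only finitely many $G$-orbits of simplices occur; controlling how finitely many generators can act on the tree without creating infinitely many orbits is the technically delicate point, and is where the procyclic edge hypothesis combines with acylindricity. In the case where $G$ is generated by its vertex stabilizers, I would start from a finite generating set, express each generator as an element of the stabilizer of a suitable vertex, and consider the finite subtree $\Delta\subseteq T$ spanning these vertices; I would then argue that the $G$-translates of $\Delta$ already cover $T$, so that $G\backslash T$ has finitely many orbits of vertices and edges.

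Having obtained finiteness of $G\backslash T$, the pro-$p$ reconstruction theorem yields the finite graph of pro-$p$ groups $(\G,\Gamma)$ with the prescribed edge and vertex groups, and the fundamental pro-$p$ group isomorphism with $G$. Finite generation of the vertex groups is the last point to verify: with procyclic edge groups and $G$ finitely generated, one obtains finite generation of each vertex stabilizer from a Reidemeister-Schreier type argument adapted to the pro-$p$ graph of groups decomposition (or directly from the finiteness of the number of orbits together with the procyclic edge condition, which controls the intersection of a vertex group with its neighbours). The conjugacy of stabilizers of arbitrary vertices/edges of $T$ with subgroups of the $(\G,\Gamma)$ vertex/edge groups then follows from transitivity of $G$ on lifts of each vertex and edge of $\Gamma$.
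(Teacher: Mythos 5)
Your proposal hinges on reducing to the case where $G\backslash T$ is a finite graph and then citing a reconstruction theorem for cofinite actions. That reduction is the genuine gap: for a pro-$p$ tree the quotient $T/G$ is a profinite graph that need not be finite even when $G$ is finitely generated and the action is acylindrical with procyclic edge stabilizers, and the paper never proves (or uses) such finiteness. Instead, the paper's Proposition \ref{General} passes to the quotients $G/\tilde U$ (where $\tilde U$ is generated by the intersections of an open subgroup $U$ with vertex stabilizers), which are virtually free pro-$p$ and hence, by the Herfort--Zalesskii--Zapata theorem, fundamental groups of \emph{finite} graphs of \emph{finite} $p$-groups; the bound $d(G_U)\le d(G)$ together with Lemma \ref{graph} forces these finite graphs to stabilize, and only from this inverse-limit picture does one extract the finiteness of the set of maximal vertex and edge stabilizers up to conjugacy. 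Your "compactness argument" in the acylindrical case is a placeholder for exactly this missing machinery, and it would not yield finitely many $G$-orbits of simplices in $T$ — what acylindricity actually buys in the paper is only that $\bigcup_{G_e\neq 1}T^{G_e}/G$ has finite diameter, after which $G$ is split as a free pro-$p$ product over the (finitely many) components and each factor is handled by exhibiting a \emph{finite subgraph} $\Delta$ of the possibly infinite quotient that already carries all of $G$ (Lemma \ref{acylindric}), not by showing the whole quotient is finite.

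The second case has the same defect in a sharper form. Topological generation by vertex stabilizers does not let you express a finite generating set as elements of finitely many chosen vertex stabilizers: a priori a profinite space's worth of stabilizers is involved, so your finite spanning subtree $\Delta$ is not available until one has already proved that there are only finitely many maximal vertex stabilizers up to conjugation — which is again the content of Proposition \ref{General}. Even granting that, the paper's proof of this case is not a quotient-graph argument at all: it requires a delicate coherent choice of conjugacy representatives so that $G(e)=G(d_0(e))\cap G(d_1(e))$ along a maximal subtree, and an Euler-characteristic comparison (Lemma \ref{presentation}) to show that the resulting map from $\Pi_1(\G,\Gamma)$ onto $G$ is injective; this last point matters because, as the paper notes citing \cite{HZ:10}, the naive pro-$p$ structure theorem fails in general, so surjectivity of the natural map is not enough. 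In short, the proposal identifies the right two cases but the route through "finiteness of $G\backslash T$" does not exist, and the substantive content of the paper's proof (the inverse limit over virtually free quotients, the rank bounds, and the injectivity argument) is absent.
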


The original motivation for this study was an attempt to
investigate further the  pro-$p$ analogues of abstract limit
groups  defined and studied by Kochloukova and the second author
in \cite{Kochloukova2}.

Limit groups have been studied extensively over the last ten years and they played a crucial role in the solution of the
Tarski problem [12-14, 27-32]. The name \emph{limit group} was introduced by Sela. There are different equivalent
definitions for these groups. The class of limit groups coincides with the class of fully residually free  groups; under
this name they were studied by Remeslennikov, Kharlampovich and Myasnikov. One can also define  limit groups as finitely
generated subgroups of groups obtained from free groups of finite rank by finitely many extensions of centralizers.
Starting from this definition,  a special class $\mathcal{L}$ of pro-$p$ groups  (pro-$p$ analogues of  limit groups) was
introduced in \cite{Kochloukova2}. The class $\mathcal{L}$ consists of all finitely generated subgroups of pro-$p$ groups
obtained from free pro-$p$ groups of finite rank by finitely many extensions of centralizers. In \cite{Kochloukova2} it
was shown that many properties that hold for  limit groups are also satisfied by the pro-$p$ groups from the class
$\mathcal{L}$. In the present paper we study further the group theoretic structure properties of the pro-$p$ groups from
the class $\mathcal{L}$ and prove some other results that are known to hold in the abstract case.

\medskip
It is well known that a freely-indecomposable limit group of height $h\geq 1$ is the fundamental group of a finite graph of
groups that has infinite cyclic edge groups and has a vertex group that is a non-abelian limit group of height $\leq h-1$;
for example, see Proposition 2.1 in \cite{Bridson}. This fact allows one to prove many interesting properties for limit
groups using induction arguments. The main theorem of this paper is  an analogue of this result for
pro-$p$ groups from the
class $\mathcal{L}$.

\begin{theorema}\label{StructureThm}
Let $G$ be a pro-$p$ group from the class $\mathcal{L}$. If $G$ has weight $n\geq 1$, then it is the fundamental pro-$p$ group of a finite graph of
pro-$p$ groups that has infinite procyclic or trivial edge groups and finitely generated vertex groups. Moreover, if $G$ is non-abelian, then it has at least one vertex group that is a non-abelian pro-$p$ group and all the non-abelian vertex groups of $G$ are pro-$p$ groups from the class $\mathcal{L}$ of weight $\leq n-1$.
\end{theorema}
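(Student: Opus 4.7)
I would proceed by induction on the weight $n \geq 1$, using Theorem B applied to the Bass--Serre pro-$p$ tree of the last extension of centralizers as the main engine.

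By the definition of $\mathcal{L}$, the group $G$ embeds as a finitely generated subgroup of a pro-$p$ group $H = H' *_{C} A$, where $H'$ is in $\mathcal{L}$ of weight $n-1$ (with $H_0$ a free pro-$p$ group of finite rank), $C$ is a procyclic centralizer in $H'$, and $A = C \times \mathbb{Z}_p^k$ for some $k \geq 1$. Let $T$ denote the standard pro-$p$ tree of this amalgamated free product, so that $H$ acts on $T$ with vertex stabilizers conjugate to $H'$ or $A$ and edge stabilizers conjugate to $C$. The first step is to verify that this action is acylindrical with a uniform bound, which reduces to showing that $C$ is malnormal in $H'$; this should follow from standard properties of centralizers in groups from $\mathcal{L}$ established in \cite{Kochloukova2}, namely that the centralizer of any non-trivial element is abelian and that maximal abelian subgroups are malnormal.

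Now restrict the action to $G$. If $G$ fixes a vertex of $T$, then $G$ is either conjugate into $A$ (hence abelian, so the conclusion holds trivially with $G$ as the unique vertex group) or conjugate into $H'$ (hence of weight $\leq n-1$, and covered by the inductive hypothesis). Otherwise the induced action on the minimal $G$-invariant subtree is non-trivial. Since acylindricity is inherited by the subgroup action, Theorem B applies and yields a finite graph of pro-$p$ groups $(\G,\Gamma)$ decomposing $G$, with procyclic or trivial edge groups and finitely generated vertex groups. Each vertex group is, up to conjugation, either contained in $A$ (hence abelian) or in $H'$, the latter being a finitely generated subgroup of a weight-$(n-1)$ group of $\mathcal{L}$ and therefore itself a member of $\mathcal{L}$ of weight $\leq n-1$.

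The step I expect to be the main obstacle is the final clause: when $G$ is non-abelian, at least one vertex group of $(\G,\Gamma)$ must be non-abelian. I would argue this by contradiction. If every vertex group were abelian, then $G$ would be the fundamental pro-$p$ group of a finite graph of finitely generated abelian pro-$p$ groups amalgamated along procyclic subgroups; I would then invoke the commutative-transitivity and CSA-type properties of groups in $\mathcal{L}$ (a consequence of the fact that centralizers of non-trivial elements are abelian, together with malnormality of the edge groups) to force such a $G$ to be abelian, contradicting the hypothesis. This is the part of the argument where the most delicate use of the centralizer and commensurator results of \cite{Kochloukova2} is likely to be required.
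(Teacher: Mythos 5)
Your main line coincides with the paper's: embed $G$ into $G_n=G_{n-1}\amalg_C A$, observe that the action on the standard pro-$p$ tree of this amalgam is $2$-acylindrical because non-trivial edge stabilizers are self-normalized in the vertex stabilizers (this is exactly Theorem 5.1 of \cite{Kochloukova2}, which is what the paper's Lemma on $2$-acylindricity uses), apply the general structure theorem for acylindrical actions (you call it Theorem B, but in the paper's numbering it is Theorem A; the statement you are proving is the paper's Theorem B), and note that every vertex group is a vertex stabilizer, hence conjugate into $G_{n-1}$ or into $A$, which yields the weight drop for the non-abelian vertex groups. Up to that point your plan is the paper's proof, modulo the unnecessary detour through a minimal invariant subtree and an induction that is not needed for the decomposition itself.

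The genuine gap is in the final clause, precisely where you expected trouble. The implication you propose to invoke --- that a fundamental pro-$p$ group of a finite graph of finitely generated abelian pro-$p$ groups with procyclic or trivial edge groups, enjoying CSA/commutative-transitivity, must be abelian --- is false: $\mathbb{Z}_p\amalg\mathbb{Z}_p$ is a free pro-$p$ group of rank $2$, hence CSA and in $\mathcal{L}$, and it is the fundamental group of a graph with two abelian vertex groups and trivial edge group. (It has weight $0$, so it does not contradict the theorem, but it does refute the lemma your contradiction would rest on.) Commutative transitivity says nothing about trivial edge groups, nor about stable letters of loops, which lie in no vertex group. The paper's actual argument is a case analysis: first collapse the fictitious edges of a maximal subtree $T_\Gamma$ so that every vertex group properly contains the incident edge groups; then, if all vertex groups were abelian, the centralizer of a non-trivial edge group $\G(e)$ with $e\in T_\Gamma$ would contain both adjacent vertex groups and hence be non-abelian, contradicting Theorem 5.1 of \cite{Kochloukova2}, so $T_\Gamma$ has a single vertex; finally $G=\textrm{HNN}(H,A,t)$ over procyclic $A$, and one splits again --- if $H$ is not procyclic then $\langle H,H^t\rangle=H\amalg_A H^t$ gives a non-abelian centralizer of $A$, while if $H$ is procyclic then $A=A^t$ is central and Theorem 5.1 forces $G$ abelian, a contradiction. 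None of these steps follows automatically from ``CSA-type properties,'' so as written your plan does not establish that the constructed decomposition has a non-abelian vertex group.
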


Case 1) of Theorem A is the key ingredient of the proof of Theorem B.

 \smallskip

 Theorem B has some interesting consequences. In \cite{Kochloukova1} Kochloukova proved that any limit group $G$ has
non-positive Euler-Poincar\'e characteristic $\chi(G)$ and that
$\chi(G)=0$ if and only if $G$ is abelian. Inspired from this
result, in \cite{Kochloukova2},  Kochloukova and the second author
proved that any pro-$p$ group $G$ from the class $\mathcal{L}$ has
a non-positive Euler-Poincar\'e characteristic and raised the
question whether it is true that $\chi(G)=0$ if and only if $G$ is
abelian (see question 9.3 in \cite{Kochloukova2}). We use Theorem
B to give an affirmative answer to this question. In the same
paper, Kochloukova and the second author noted that if $G$ is a
limit group such that every abelian subgroup of $G$ is cyclic and
$G$ itself is not cyclic then  the deficiency $\textrm{def}(G)\geq
2$, and they raised the question whether the analogue of this
result is also true for pro-$p$ groups from the class
$\mathcal{L}$ (see question 9.1 in \cite{Kochloukova2}). We use
Theorem B once more to give a positive answer to this question.

\smallskip

In \cite{Stallings}, based on results of Greenberg \cite{Greenberg} , Stallings proved that if $G$ is a  free group and
$H$ and $K$ are finitely
generated subgroups of $G$ with the property that $H\cap K$ has finite index in both $H$ and $K$, then $H\cap K$ has finite
index in $\langle H, K \rangle$, where $\langle H, K \rangle$ denotes the subgroup of $G$ generated by $H$ and $K$.
Nowadays this property is known as Greenberg-Stallings property. Kapovich \cite{Kapovich} proved that finitely generated
word-hyperbolic fully residually free
groups satisfy the Greenberg-Stallings property. Nikolaev and Serbin extended it to all limit groups \cite{Serbin}. In
this paper we prove that all pro-$p$
groups from the class $\mathcal{L}$ satisfy this property.

\smallskip

In \cite{Rosset} Rosset proved that every finitely generated subgroup $H$ of a free group $F$ has a ``root'': a subgroup
$K$ of $F$ that contains $H$ with
$|K:H|$ finite and which contains every subgroup $U$ of $F$ that contains $H$ with $|U:H|$ finite. We extend the result of
Rosset to the class of all   limit groups. We also prove the existence of the root for finitely generated closed subgroups
of  pro-$p$ groups from the class $\mathcal{L}$.
This allows us to show that every non-abelian finitely generated closed subgroup $H$ of a pro-$p$ group $G$ from the
class $\mathcal{L}$ has finite index
 in its commensurator $\textrm{Comm}_G(H)$. This property is also satisfied by  abstract limit groups \cite{Serbin}.

\smallskip

We list our results for the pro-$p$ analogues of limit groups in the following.

\begin{theoremc}\label{Finiteness}
Let $G$ be a pro-$p$ group from the class $\mathcal{L}$. Then
\begin{itemize}
\item[(1)] The group $G$ has a non-positive Euler-Poincar\'e characteristic. Moreover $\chi(G)=0$ if and only if $G$ is
abelian;
\item[(2)] If every abelian pro-$p$ subgroup of $G$ is procyclic and $G$ itself is not procyclic, then $\textrm{def}(G)\geq 2$;
\item[(3)] If every abelian pro-$p$ subgroup of $G$ is procyclic and $G$ itself is not procyclic, then $G$ has exponential subgroup growth;
\item[(4)] There are only finitely many conjugacy classes of non-procyclic maximal abelian subgroups of $G$;
\item[(5)] [Greenberg-Stallings Property] If $H$ and $K$ are finitely generated subgroups of $G$ with the property that $H\cap K$ has finite index in both $H$ and $K$, then $H\cap K$ has finite index in $\langle H, K \rangle$;
\item[(6)] If $H$ is a finitely generated subgroup of $G$, then $H$ has a root in $G$;
\item[(7)] If $H$ is a finitely generated non-abelian subgroup of $G$, then $|\textrm{Comm}_G(H):H|<\infty$.
\end{itemize}
\end{theoremc}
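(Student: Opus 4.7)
The plan is to prove all seven parts by induction on the weight $n$ of $G \in \mathcal{L}$, using Theorem B as the inductive engine. The base case $n=0$ (free pro-$p$ of finite rank) is classical in each item. For $n \geq 1$, Theorem B decomposes $G = \pi_1(\G, \Gamma)$ as the fundamental pro-$p$ group of a finite graph of pro-$p$ groups with procyclic or trivial edge groups, finitely generated vertex groups, and non-abelian vertex groups lying in $\mathcal{L}$ of weight at most $n-1$. The associated standard pro-$p$ tree $T$ on which $G$ acts with these stabilizers is the principal tool throughout.

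For (1) apply the Euler-Poincar\'e formula for graphs of pro-$p$ groups, $\chi(G) = \sum_v \chi(G_v) - \sum_e \chi(G_e)$. Procyclic edge groups contribute $0$, so $\chi(G) = \sum_v \chi(G_v)$; the inductive hypothesis together with the vanishing of $\chi$ on finitely generated abelian pro-$p$ groups of positive rank yields $\chi(G) \leq 0$, with equality forcing every vertex group to be abelian, after which a short analysis of the graph forces $G$ itself to be abelian. Part (2) follows by combining (1) with the identity $\textrm{def}(G) \geq 1-\chi(G)$: under the hypothesis that every abelian pro-$p$ subgroup of $G$ is procyclic, the vertices of vanishing Euler-Poincar\'e characteristic are procyclic, and if $G$ is not itself procyclic the resulting inequality becomes strict enough to give $\textrm{def}(G)\geq 2$.

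For (3) and (4) the same hypothesis forces the decomposition to contain either a free pro-$p$ subgroup of rank $\geq 2$ (from a nontrivial HNN or amalgam along a procyclic edge) or a non-abelian vertex group of smaller weight; by induction either alternative yields exponential subgroup growth, giving (3). For (4), a non-procyclic abelian subgroup $A \leq G$ cannot contain a hyperbolic element of $T$ without forcing a non-procyclic edge stabilizer, so $A$ lies in some vertex stabilizer; finiteness of the vertex set together with the inductive statement applied to each vertex group then produces the finitely many conjugacy classes of non-procyclic maximal abelian subgroups of $G$.

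The most delicate parts are (5), (6) and (7), which require a pro-$p$ Bass-Serre analysis of finitely generated subgroups $H \leq G$ via their minimal invariant subtrees in $T$. For (5), the finite-index hypothesis forces the minimal $H$-, $K$-, and $(H\cap K)$-invariant subtrees to coincide up to finite modification, reducing the claim to the vertex groups. For (6), any finite-index overgroup of $H$ is controlled by its action on $T$, so maximal overgroups assemble into a root. For (7), a commensurating element of a non-abelian finitely generated $H$ must preserve its minimal $H$-invariant subtree up to finite adjustment, and combined with (6) this bounds $|\textrm{Comm}_G(H):H|$. The main obstacle is the absence of the classical discrete tree combinatorics used by Kapovich and by Nikolaev-Serbin in the abstract setting; here Theorem A, applied to the action of the relevant subgroups on $T$ with procyclic edge stabilizers, is the pro-$p$ substitute that makes the Bass-Serre-type arguments go through.
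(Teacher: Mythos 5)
Your overall inductive frame (induct on weight, use Theorem B to decompose $G$ as a finite graph of pro-$p$ groups with procyclic/trivial edges and lower-weight non-abelian vertex groups) matches the paper, and parts (1) and (4) are essentially the paper's arguments. But several of the remaining parts have genuine gaps. For (2), the inequality $\textrm{def}(G)\geq 1-\chi(G)$ is not an identity: $\textrm{def}(G)=\dim H^1-\dim H^2$ while $1-\chi(G)=\dim H^1-\dim H^2+\dim H^3-\cdots$, so your inequality fails whenever $H^3\neq 0$ dominates; you would first have to prove $cd(G)\leq 2$ under the hypothesis (which is doable by induction and Mayer--Vietoris, but you never say so). The paper avoids this entirely by proving directly that $\textrm{def}(A\amalg_C B)\geq \textrm{def}(A)+\textrm{def}(B)-2$ and $\textrm{def}(\textrm{HNN}(H,A,t))\geq\textrm{def}(H)$ for procyclic $C$, $A$ (Lemma \ref{def}) and assembling $G$ from its vertex groups. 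For (3) your argument is broken: exponential subgroup growth does not pass from an infinite-index subgroup (or even from a vertex group of the decomposition) to the ambient pro-$p$ group, so "$G$ contains a free pro-$p$ subgroup of rank $\geq 2$ or a vertex group with exponential growth, hence $G$ has exponential growth" is a non sequitur. The paper instead applies Lackenby's criterion: using $\chi_2(G_n)\leq |G:G_n|\chi_2(G)$ one gets $d(G_n)-1\geq \textrm{def}(G_n)-1\geq |G:G_n|(\textrm{def}(G)-1)$, and part (2) makes the right-hand side at least $|G:G_n|$.

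For (5)--(7) you propose exactly the minimal-invariant-subtree combinatorics of Kapovich and Nikolaev--Serbin, which is the method the paper explicitly says cannot be transported to the pro-$p$ setting (elements of pro-$p$ groups are not finite words in generators, and "finite modification" of pro-$p$ subtrees has no meaning here); your reduction of (5) "to the vertex groups" is not substantiated and is where the real difficulty lives. The paper's actual proofs are algebraic: for (5) one forms the proper amalgam $H\amalg_{H\cap K}K$ and uses Ribes's filtration criterion (Theorem 9.2.4 of \cite{Ribes1}) to produce an open subgroup of $H\cap K$ normal in $\langle H,K\rangle$ (Lemma \ref{nearnormal}), then invokes the theorem that finitely generated normal subgroups of non-abelian groups in $\mathcal{L}$ have finite index (Theorem 6.5 of \cite{Kochloukova2}); for (6) the non-abelian case uses $\chi(H)<0$, multiplicativity of $\chi$ to bound $|K:H|\leq-\chi(H)$ for any finite-index overgroup $K$, and the already-proved Greenberg--Stallings property to show a $\chi$-maximal overgroup is the root (the abelian case is handled separately via the structure of $\mathbb{Z}_p$-modules); for (7) one reduces via Proposition \ref{commensurator} to $H=\textrm{root}_G(H)$ and uses $|N_G(H):H|<\infty$ (Theorem 6.7 of \cite{Kochloukova2}) plus the root property to get $H=N_G(H)=\textrm{Comm}_G(H)$. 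None of these ingredients appears in your sketch, so as written parts (3) and (5)--(7) are not proved.
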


By Corollary 5.5 in \cite{Kochloukova2}, we know that a solvable Demushkin group belongs to the class $\mathcal{L}$ if and
only if it is abelian. It is not clear which non-solvable Demushkin groups belong to the class $\mathcal{L}$.
In \cite{Kochloukova2} it was shown that if $G$ is a Demushkin group with the invariant $q=\infty$ and $d(G)$ divisible
by 4, then $G\in \mathcal{L}$; in the remaining cases it is not known whether $G\in \mathcal{L}$. Anyway, we show that
parts $(5)$, $(6)$ and $(7)$ of the above theorem also hold for any non-solvable Demushkin group $G$. Indeed, we study a
more general family of groups that includes finitely generated free pro-$p$ groups and Demushkin groups, and prove the
following.

\begin{theoremd}\label{Dem}
Let $G$ be a pro-$p$ group with the property that all infinite index finitely generated subgroups of $G$ are free pro-$p$. Suppose that $G$ is finitely presented and has an open subgroup of deficiency greater than 1.  Then
\begin{itemize}
\item[(1)] If $H$ is a finitely generated subgroup of $G$ that contains a non-trivial normal subgroup of $G$, then $H$ has finite index in $G$;
\item[(2)] [Greenberg-Stallings Property] If $H$ and $K$ are finitely generated subgroups of $G$ with the property that $H\cap K$ has finite index in both $H$ and $K$, then $H\cap K$ has finite index in $\langle H, K \rangle$;
\item[(3)] If $H$ is a finitely generated subgroup of $G$, then $H$ has a root in $G$;
\item[(4)] Suppose in addition that all infinite index subgroups of $G$ are free pro-$p$ groups. Then $|\textrm{Comm}_G(H):H|<\infty$ for any non-trivial finitely generated subgroup $H$ of $G$.
\end{itemize}
\end{theoremd}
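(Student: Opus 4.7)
The plan is to prove the four parts in order, with part (1) as the main engine for (2)--(4). Under the hypothesis, $G$ is automatically torsion-free: any finite subgroup would be a finitely generated subgroup of infinite index (assuming $G$ infinite), hence free pro-$p$, hence trivial. For part (1), I argue by contradiction. Assume $H$ has infinite index; then $H$ is free pro-$p$. Replacing $N$ by $\bigcap_{g\in G}gHg^{-1}$, we may take $N\trianglelefteq G$ with $N\le H$ and $N$ nontrivial. Pass to an open subgroup $U\le G$ of deficiency $\ge 2$; $U$ still satisfies the hypotheses of Theorem D. Then $V:=U\cap H$ is finitely generated free pro-$p$ of infinite index in $U$, and $M:=N\cap U$ is a nontrivial closed normal subgroup of $U$ contained in $V$, hence normal in $V$ itself. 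In the free pro-$p$ group $V$, the subgroup $M$ is either of finite index (yielding a nontrivial finitely generated closed normal subgroup of $U$ of infinite index) or of infinite rank. Both alternatives should be ruled out by a cohomological argument exploiting $\textrm{def}(U)\ge 2$: specifically, the positivity of $\beta_1^{(2)}(U)$ together with Hochschild--Serre applied to $M\hookrightarrow U\twoheadrightarrow U/M$ yields a contradiction by adapting the Bieri--Strebel strategy to the pro-$p$ setting. Establishing this cohomological fact cleanly is the main obstacle for (1).

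For part (2), set $L=H\cap K$ and $J=\langle H,K\rangle$. If $L$ has finite index in $G$, the statement is immediate. Otherwise $H,K,L$ are free pro-$p$. If $J$ has infinite index in $G$ then $J$ is free pro-$p$, and the Greenberg--Stallings property follows from its validity inside finitely generated free pro-$p$ groups: the Euler-characteristic identity $\chi(L)=[J:L]\chi(J)$ with $\chi(J)\ne 0$ in the non-procyclic case suffices (the procyclic case reduces to the commutativity of a common power, forcing both $H,K$ to lie in a common abelian centralizer). The remaining case, $J$ of finite index in $G$, is the main technical point: $J$ is then open and inherits the hypotheses of Theorem D, so applying part (1) inside $J$ it suffices to produce a nontrivial closed normal subgroup of $J$ contained in the finitely generated infinite-index subgroup $L$. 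A natural candidate is obtained by iterating the normal cores of $L$ alternately in $H$ and in $K$; showing that the resulting closed intersection is nontrivial is the main difficulty, and may require the pro-$p$ Bass--Serre setup of Theorem B applied to an action of $J$ on a tree with vertex stabilizers conjugate to $H$ and $K$.

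Part (3) then follows from (2). The trivial cases are $H$ of finite index ($K=G$) and $H=1$ ($K=1$ by torsion-freeness). Otherwise $H$ is free pro-$p$ of some rank $r\ge 1$ and infinite index. When $r=1$, the root is the unique maximal procyclic subgroup containing $H$, whose existence and uniqueness follow from the fact that centralizers of nontrivial elements inside infinite-index (hence free pro-$p$) subgroups are procyclic. When $r\ge 2$, any finitely generated $U$ with $H\le U$ and $[U:H]<\infty$ also has infinite index in $G$, hence is free pro-$p$, and the Euler-characteristic identity $\chi(U)\cdot[U:H]=\chi(H)=1-r$ bounds $[U:H]\le r-1$. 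Taking $U$ of maximal such index, the Greenberg--Stallings property from (2) shows that $\langle U,U'\rangle$ is again a finite-index extension of $H$ for any competing $U'$, and maximality forces $U'\le U$; hence this $U$ is the root $K$.

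For part (4), let $K$ be the root of $H$. I first identify $\textrm{Comm}_G(H)=N_G(K)$. Given $g\in\textrm{Comm}_G(H)$, the intersection $H\cap gHg^{-1}$ has finite index in both, so by the observation that the root depends only on the commensurability class of a finitely generated subgroup (an immediate corollary of (3)), $gKg^{-1}=\mathrm{root}(gHg^{-1})=K$, i.e., $g\in N_G(K)$; the reverse inclusion is clear from $[K:H]<\infty$. It remains to bound $[N_G(K):K]$. If $N_G(K)$ has finite index in $G$, it inherits the hypotheses of Theorem D, and part (1) applied to it (with the finitely generated subgroup $K$ containing the nontrivial normal subgroup $K\trianglelefteq N_G(K)$) gives $[N_G(K):K]<\infty$. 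If $N_G(K)$ has infinite index, the strengthened hypothesis of part (4) ensures it is free pro-$p$, and the classical fact that a nontrivial finitely generated normal subgroup of a non-procyclic free pro-$p$ group has finite index (the procyclic case is immediate) yields the bound. In all cases, $[\textrm{Comm}_G(H):H]=[N_G(K):K]\cdot[K:H]<\infty$.
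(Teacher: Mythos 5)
Your proposal has the right architecture --- part (1) as the engine, (2) from (1) plus a ``normal core'' statement, (3) from (2) plus Euler characteristics, (4) from (3) plus a normalizer bound --- and your arguments for (3) and (4) essentially coincide with the paper's. But the two steps you yourself flag as ``the main obstacle'' and ``the main difficulty'' are precisely the substance of the theorem, and neither is established, so this is not yet a proof. For (1), no $L^2$-Betti numbers, Hochschild--Serre, or Bieri--Strebel adaptation is needed (and $\beta_1^{(2)}$ is not an off-the-shelf tool for pro-$p$ groups). The paper's route is elementary: reduce to $\textrm{def}(G)\geq 2$; rule out $H\cong\mathbb{Z}_p$ by Hillman--Schmidt (a normal $\mathbb{Z}_p$ forces $\textrm{def}\leq 1$); then consider the family $\mathcal{T}(H)$ of finitely generated intermediate subgroups $M$ of infinite index in $G$. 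Each such $M$ is free pro-$p$ and contains $H\supseteq K$ with $K$ normal in $M$, so by Lubotzky's Proposition 3.3 (a finitely generated subgroup of a finitely generated free pro-$p$ group containing a nontrivial normal subgroup has finite index) one gets $|M:H|\leq -\chi(H)$; Zorn then gives a maximal $N\in\mathcal{T}(H)$. Writing $N=\bigcap_i W_i$ with $W_i=\langle N,w_i\rangle$ necessarily of finite index in $G$ (by maximality of $N$), the inequality $\textrm{def}(W_i)-1\geq |G:W_i|(\textrm{def}(G)-1)$, coming from the second partial Euler--Poincar\'e characteristic, bounds $|G:W_i|$ by $(d(N)+1)/(\textrm{def}(G)-1)$, so only finitely many $W_i$ occur and $N$ has finite index --- a contradiction. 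Your dichotomy (``$M$ of finite index in $V$ or of infinite rank'') is left entirely unresolved.

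For (2), the intersection of iterated normal cores of $L=H\cap K$ taken alternately in $H$ and $K$ has no reason to be nontrivial --- that is exactly the difficulty you name --- and no tree action is needed to fix it. The missing ingredient is the paper's Lemma 4.1: since the amalgam $H\amalg_{L}K$ is proper, Theorem 9.2.4 of Ribes--Zalesskii yields filtered families $U_i\trianglelefteq_o H$ and $V_i\trianglelefteq_o K$ with $\bigcap U_i=1=\bigcap V_i$ and $U_i\cap L=V_i\cap L$; once $U_k,V_k\leq L$ one gets $U_k=V_k$, an open subgroup of $L$ normal in both $H$ and $K$, hence in $\langle H,K\rangle$, and part (1) applies at once (no case split on the index of $\langle H,K\rangle$ is required). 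Note also that your treatment of the infinite-index case is circular: the identity $\chi(L)=[J:L]\chi(J)$ presupposes the finiteness of $[J:L]$ that you are trying to prove. Since (3) and (4) rest on (1) and (2), the whole argument is incomplete without these two inputs.
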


\smallskip

We note that we can not use in our proofs standard combinatorial methods as in the abstract case because not all elements
of  pro-$p$ groups can be expressed as finite words of generators.

\medskip

\noindent \textit{Organization.}  We prove Theorem A in section 2.
In section 3 we prove Theorem B and parts $(1)$, $(2)$, $(3)$ and $(4)$
of Theorem C. Parts $(5)$, $(6)$ and $(7)$ of  Theorem C are
proved in section 4.  Theorem D is proved in section 5; as an
immediate consequence we get our results for Demushkin groups. In
section 6 we note that every finitely generated subgroup of an
abstract limit group has a root.

\medskip

\noindent \textit{Notation.}  Throughout the paper $p$ denotes a
prime. The $p$-adic integers are denoted
by $\Z_p$. When $G$ is a topological group, then subgroups of $G$ are tacitly taken to be closed, unless otherwise stated; also $d(G)$ tacitly refers to the minimal number of topological generators of $G$. Moreover, homomorphisms between topological groups are tacitly taken to be continuous.
For a pro-$p$ group $G$ acting continuously on a pro-$p$ tree $T$ we define $\tilde{G}:=\langle G_x \mid x\in T \rangle$, where $G_x$ is the stabilizer of the point $x$.

\section{The decomposition theorem for pro-$p$ groups acting on a pro-$p$ tree $T$ with procyclic edge stabilizers}

In this section we prove Theorem A, stated in the introduction. We
start with some definitions, following \cite{Ribes2}. A
\emph{profinite graph} is a triple $(\Gamma, d_0, d_1)$, where
$\Gamma$ is a boolean space and $d_0, d_1: \Gamma \to \Gamma $ are
continuous maps such that $d_id_j=d_j$ for $i, j \in \{0, 1 \}$.
The elements of $V(\Gamma):=d_0(G)\cup d_1(G)$ are called the
\emph{vertices} of $\Gamma$ and the elements of
$E(\Gamma):=\Gamma-V(\Gamma)$ are called the \emph{edges} of
$\Gamma$. If $e\in E(\Gamma)$, then $d_0(e)$ and $d_1(e)$ are
called the initial and terminal vertices of $e$. If there is no
confusion, one can just write $\Gamma$ instead of $(\Gamma, d_0,
d_1)$.

Let $(E^*(\Gamma), *)=(\Gamma/V(\Gamma), *)$ be a pointed profinite quotient space with $V(\Gamma)$ as a distinguished point, and let $\mathbb{F}_p[[E^*(\Gamma), *]]$ and $\mathbb{F}_p[[V(\Gamma)]]$ be respectively the free profinite $\mathbb{F}_p$-modules over the pointed profinite space $(E^*(\Gamma), *)$ and over the profinite space $V(\Gamma)$ (cf. \cite{Ribes1}). Let the maps $\delta: \mathbb{F}_p[[E^*(\Gamma), *]] \to \mathbb{F}_p[[V(\Gamma)]] $ and $\epsilon : \mathbb{F}_p[[V(\Gamma)]] \to \mathbb{F}_p$ be defined respectively by $\delta(e)=d_1(e)-d_0(e)$ for all $e\in E^*(\Gamma)$ and $\epsilon(v)=1$ for all $v\in V(\Gamma)$. Then we have the following complex of free profinite $\mathbb{F}_p$-modules
\begin{equation*}
    \begin{CD}
      0 @>>> \mathbb{F}_p[[E^*(\Gamma), *]] @>\delta>> \mathbb{F}_p[[V(\Gamma)]] @>\epsilon>>
      \mathbb{F}_p @>>> 0.
    \end{CD}
  \end{equation*}

 We say that the profinite graph $\Gamma$ is a \emph{pro-$p$ tree} if the above sequence is exact. If $T$ is a pro-$p$ tree, then we say that a pro-$p$ group $G$ acts on $T$ if it acts continuously on $T$ and the action commutes with $d_0$ and $d_1$. For $t\in V(T)\cup E(T)$ we denote by $G_t$ the stabilizer of $t$ in $G$. For more details about pro-$p$ groups acting on pro-$p$ trees see \cite{Ribes2} and \cite{Melnikov}.

\medskip

We will need the following technical lemma, whose proof is similar to the proof of Lemma 2.7 in \cite{HZZ}. Recall that given a pro-$p$ group $G$, we denote by $d(G)$ the minimal number of topological generators of $G$.

\begin{lem}\label{decreasing}
Let $G$ be a finitely generated pro-$p$ group with $d(G)\geq 2$.
\begin{itemize}
\item [(a)] If $G=A\amalg_{C}B$ is a free amalgamated pro-$p$ product with $C$ procyclic, then $d(G)\geq d(A)+d(B)-1$.
\item[(b)] If $G=\textrm{HNN}(H, A,t)$ is a pro-$p$ HNN-extension with $A$ procyclic, then $d(G) \geq d(H)$.

\end{itemize}
\end{lem}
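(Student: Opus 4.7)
The plan is to compute $d(G)$ cohomologically via the identity $d(G)=\dim_{\mathbb{F}_p}H^1(G,\mathbb{F}_p)$, combined with the pro-$p$ Mayer--Vietoris cohomology exact sequence attached to a free amalgamated pro-$p$ product and to a pro-$p$ HNN-extension. These long exact sequences are available in our setting because, over procyclic edge groups, both decompositions are proper in the sense of Ribes--Zalesskii, so the usual graph-of-pro-$p$-groups resolution produces a Mayer--Vietoris sequence, which I will apply with coefficients in the trivial $G$-module $\mathbb{F}_p$. The hypothesis $d(G)\ge 2$ is used only to discard degenerate cases in which the decomposition collapses.

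For part~(a), the relevant fragment of the sequence is
\[
H^0(A)\oplus H^0(B)\longrightarrow H^0(C)\longrightarrow H^1(G)\longrightarrow H^1(A)\oplus H^1(B)\longrightarrow H^1(C).
\]
Each $H^0$ equals $\mathbb{F}_p$ and the first arrow is the difference map $(x,y)\mapsto x-y$, which is surjective; hence the connecting map $H^0(C)\to H^1(G)$ vanishes and $H^1(G)$ embeds into $H^1(A)\oplus H^1(B)$ with image equal to the kernel of $H^1(A)\oplus H^1(B)\to H^1(C)$. Since $C$ is procyclic, $\dim H^1(C,\mathbb{F}_p)\le 1$, so this kernel has dimension at least $d(A)+d(B)-1$, which yields $d(G)\ge d(A)+d(B)-1$.

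For part~(b), the HNN Mayer--Vietoris sequence with trivial coefficients reads
\[
H^0(H)\longrightarrow H^0(A)\longrightarrow H^1(G)\longrightarrow H^1(H)\longrightarrow H^1(A).
\]
The map $H^0(H)\to H^0(A)$ is the difference of the two natural inclusions of $A$ into $H$ and therefore vanishes on the trivial module, so $H^0(A)$ injects into $H^1(G)$ and contributes a one-dimensional subspace. The image of $H^1(G)$ in $H^1(H)$ equals the kernel of $H^1(H)\to H^1(A)$, which has dimension at least $d(H)-1$ because $\dim H^1(A,\mathbb{F}_p)\le 1$. Adding the two contributions gives $d(G)=\dim H^1(G)\ge 1+(d(H)-1)=d(H)$.

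The main point to check carefully is the validity of the Mayer--Vietoris sequences in the pro-$p$ category: one must confirm that, over procyclic edge groups, both constructions are proper, so that $A,B$ (respectively $H$) embed in $G$ and $C$ (respectively $A$) embeds into both sides. These properness facts are standard in the Ribes--Zalesskii theory of pro-$p$ groups acting on pro-$p$ trees and are exactly the inputs underlying the cited Lemma~2.7 of \cite{HZZ}; once they are in place, the calculation above is purely formal and mirrors the familiar Bass--Serre computation.
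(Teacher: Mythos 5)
Your part (a) is correct, but it takes a genuinely different and heavier route than the paper. The Mayer--Vietoris sequence you invoke comes from the action of $G$ on its standard pro-$p$ tree, and the groups that actually occur in it are the vertex and edge \emph{stabilizers}, i.e.\ the images of $A$, $B$, $C$ in $G$; to replace these by $A$, $B$, $C$ themselves you need properness, which for procyclic $C$ is a genuine theorem of Ribes (Theorem 3.2 of \cite{Ribes3}, quoted later in the paper) rather than a formality. The paper's argument needs no properness at all: using only the universal property of the pushout it maps $G$ onto the explicit elementary abelian group $(\bar A\times\bar B)/M$ with $M$ procyclic, and reads off $d(G)\ge d(A)+d(B)-1$ by linear algebra. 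Granting properness, your rank count in the exact sequence is fine.

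Part (b) has a genuine gap at exactly this point: pro-$p$ HNN extensions with procyclic associated subgroup need \emph{not} be proper, and no assumption of properness is made in the statement. For instance, let $H$ be free pro-$p$ on $x,y$, let $A=\langle x\rangle\cong\mathbb{Z}_p$ and $f(x)=x^p$. In every finite $p$-quotient of $\textrm{HNN}(H,A,t)$ the images of $x$ and $x^p$ are conjugate, hence have equal $p$-power order, which forces the image of $x$ to be trivial; thus $x=1$ in $G$ and $H$ does not embed. In the non-proper case the Mayer--Vietoris sequence only sees the image $H'$ of $H$ in $G$ and yields $d(G)\ge d(H')$, which can be strictly smaller than $d(H)$ (here $d(H')=1<2=d(H)$), so your argument does not establish the claimed inequality. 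The paper's proof sidesteps this entirely: from the presentation of the HNN extension one gets an epimorphism of $G$ onto $(\bar H\times\overline{\langle t\rangle})/\langle \bar a\,\overline{f(a)}^{-1}\rangle$, and since the single procyclic relator lowers the rank of $\bar H$ by at most one while $\bar t$ contributes a new generator, this quotient needs at least $(d(H)-1)+1=d(H)$ generators. If you want to keep a cohomological flavour, the properness-free substitute is the five-term exact sequence applied to this explicit presentation, not Mayer--Vietoris for the tree.
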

\begin{proof}
For a pro-$p$ group $H$ denote by $\bar{H}$ the Frattini quotient $H/{\Phi(H)}$.
\begin{itemize}
\item [(a)] Let $N$ be the kernel of the canonical homomorphism $\bar{A}\amalg \bar{B} \to \bar{G}$. Since $C$ is procyclic, the image $M$ of $N$ via the cartesian map $\bar{A}\amalg \bar{B} \to \bar{A} \times \bar{B}$ is also procyclic. The latter map induces an epimorphism from $\bar{G}$ to the elementary abelian pro-$p$ group $(\bar{A} \times \bar{B}) / M$. Hence
$d(G)=d(\bar{G})\geq d(\bar{A})+d(\bar{B})-1=d(A)+d(B)-1$.
\item [(b)] Suppose that $G=\textrm{HNN}(H, A,t)= \langle H, t ~ | ~ tat^{-1}=f(a) \rangle$, where $\langle a \rangle =A$.
Then there is an obvious epimorphism
$G\to (\bar{H}\times \bar{\langle t \rangle})/\langle \bar{t}\bar{a}{(\bar{t})}^{-1}{(\overline{f(a)})}^{-1}   \rangle$. Thus $d(G)\geq d(H)$.
\end{itemize}
\end{proof}

 Next we prove a preliminary result on the fundamental pro-$p$
group of a finite graph of finite $p$-groups. The fundamental
pro-$p$ group $\Pi_1(\G,\Gamma)$ of a finite graph of finite
$p$-groups $(\G, \Gamma)$ can be defined as the pro-$p$ completion
of the abstract (usual) fundamental group
$\Pi_1^{abs}(\G,\Gamma)$. Thus $G=\Pi_1(\mathcal{G}, \Gamma)$ has
the following presentation
\begin{displaymath}
\Pi_1(\mathcal{G}, \Gamma)=\langle \G(v), t_e\mid rel(\G(v)),
\partial_1(g)=\partial_0(g)^{t_e}, g\in \G(e), t_e=1 \  {\rm for}\ e\in
T\rangle;
\end{displaymath}
here $T$ is a maximal subtree of $\Gamma$ and $\partial_0:\G(e)\longrightarrow \G(d_0(e)),\partial_1:\G(e)\longrightarrow \G(d_1(e))$ are monomorphisms.

The fundamental group $\Pi_1(\G,\Gamma)$ acts on the standard pro-$p$ tree $S$ associated to it with vertex and edge stabilizers being conjugates of vertex and edge groups and such that $S/\Pi_1(\G,\Gamma)=\Gamma$ (see \cite{Melnikov}).

In contrast to the abstract case, the vertex groups of $(\G,
\Gamma)$ do not always embed in $\Pi_1(\G,\Gamma)$, i.e.,
$\Pi_1(\G,\Gamma)$ is not always proper. If
$\Pi_1^{abs}(\G,\Gamma)$ is residually $p$, then the vertex groups
of $(\G, \Gamma)$  embed in $\Pi_1(\G,\Gamma)$. Thus in the next
result we assume that $\Pi_1^{abs}(\G,\Gamma)$ is residually $p$.

\smallskip

\begin{lem}\label{graph}
Let $(\mathcal{G}, \Gamma)$ be a finite graph of finite $p$-groups
with cyclic edge groups $\G(e)$ such that $\G(e)\neq \G(v)$  for every edge $e$ in some maximal subtree $T_\Gamma$ of $\Gamma$ and every vertex $v$ incident to  $e$. Let
$G=\Pi_1(\mathcal{G}, \Gamma)$ be the fundamental pro-$p$ group
of $(\mathcal{G}, \Gamma)$. Then $d(G)$ tends to infinity whenever $|\Gamma|$ tends to infinity.\end{lem}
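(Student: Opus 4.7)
The plan is to compute the Frattini quotient of $G=\Pi_1(\G,\Gamma)$ explicitly and derive an equality expressing $d(G)$ in terms of the combinatorial data of $(\G,\Gamma)$; one then shows that any bound $d(G)\leq N$ forces $|\Gamma|$ to be bounded by a function of $N$. Write $\bar H := H/\Phi(H)$. Using the presentation of $\Pi_1(\G,\Gamma)$ relative to $T_\Gamma$ recalled above and abelianizing modulo $p$, each stable letter $\bar t_e$ for $e\in E(\Gamma)\setminus T_\Gamma$ becomes a free generator while every edge relation $t_e\partial_0(a)t_e^{-1}=\partial_1(a)$ reduces to $\bar\partial_0(\bar a)=\bar\partial_1(\bar a)$ for $a\in\G(e)$. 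Letting $\partial:\bigoplus_{e\in E(\Gamma)}\bar\G(e)\to\bigoplus_{v\in V(\Gamma)}\bar\G(v)$ denote the boundary map $\bar a\mapsto\bar\partial_0(\bar a)-\bar\partial_1(\bar a)$, one therefore obtains
\[
\bar G\;\cong\;\Bigl(\bigoplus_{v\in V(\Gamma)}\bar\G(v)\;\oplus\;\mathbb F_p^{|E(\Gamma)\setminus T_\Gamma|}\Bigr)\Big/\,\operatorname{image}(\partial),
\]
and rank-nullity applied to $\partial$, together with $\sum_e d(\G(e))=|E(\Gamma)|-e_0$ where $e_0:=\#\{e:\G(e)=1\}$ and $|E(\Gamma)\setminus T_\Gamma|=|E(\Gamma)|-|V(\Gamma)|+1$, yields the identity
\[
d(G)\;=\;\sum_{v\in V(\Gamma)}\bigl(d(\G(v))-1\bigr)\;+\;1\;+\;e_0\;+\;\dim\ker\partial.
\]

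Assume $d(G)\leq N$; every summand on the right of the above identity is then bounded by $N-1$. Letting $V_n$ denote the set of non-cyclic vertices, we obtain $|V_n|\leq N-1$, $e_0\leq N-1$, $\dim\ker\partial\leq N-1$, and $d(\G(v))\leq N$ for every $v$. The hypothesis $\G(e)\subsetneq\G(v)$ for $e\in T_\Gamma$ incident to $v$, combined with the fact that every proper subgroup of a cyclic $p$-group is contained in its Frattini subgroup, implies that for any tree edge $e$ with both endpoints cyclic, $\bar\partial_0(\bar\G(e))=\bar\partial_1(\bar\G(e))=0$. Such an edge is therefore either trivial or satisfies $\bar\G(e)\subseteq\ker\partial$, so the set $T_\Gamma^-$ of tree edges with both endpoints cyclic satisfies $|T_\Gamma^-|\leq e_0+\dim\ker\partial\leq 2(N-1)$. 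Likewise $\dim\operatorname{image}(\partial)\leq\sum_v d(\G(v))$ combined with the identity forces $|E(\Gamma)\setminus T_\Gamma|\leq N$.

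The main step is to bound $\deg_{T_\Gamma}(v)$ for each $v\in V_n$. Restrict $\partial$ to tree edges at $v$ whose other endpoint is cyclic; since the condition forces $\bar\partial_i$ to vanish at that cyclic endpoint, the kernel of this restriction embeds into $\ker\partial$ while its image lies in $\bar\G(v)$, of dimension $\leq N$. Rank-nullity then bounds the number of non-trivial such edges at $v$ by $\dim\ker\partial+d(\G(v))\leq 2N-1$. Adding the at most $e_0\leq N-1$ trivial tree edges at $v$ and the at most $|V_n|-1\leq N-2$ tree edges from $v$ into $V_n$, we obtain $\deg_{T_\Gamma}(v)\leq 4N-4$. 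Summing over $v\in V_n$ bounds the set $T_\Gamma^+$ of tree edges with at least one non-cyclic endpoint; the tree identity $|T_\Gamma^+|=|V_n|+k-1$, where $k$ is the number of connected components of $T_\Gamma^-$, then bounds $k$, hence $|V_c|=|T_\Gamma^-|+k$, hence $|V(\Gamma)|$ and $|E(\Gamma)|$, and finally $|\Gamma|$, all by polynomials in $N$. The main obstacle is this final degree bound, which requires carefully accounting for each tree edge at $v$ in exactly one of the already-bounded quantities $e_0$, $\dim\ker\partial$, or $d(\G(v))$.
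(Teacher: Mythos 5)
Your proof is correct, but it takes a genuinely different route from the paper's. The paper argues by successive reductions: since the free pro-$p$ group $\pi_1(\Gamma)$ of rank $|E(\Gamma)|-|V(\Gamma)|+1$ is a quotient of $G$, one may assume that rank is bounded and, using Lemma~\ref{decreasing}(b), pass to the tree case $\Gamma=T_\Gamma$; the hypothesis $\G(e)\neq\G(v)$ is then used to exhibit $\amalg_{l\in P}C_p$ (one factor per pending vertex) as a quotient, bounding the number of leaves and reducing to a long segment, which is handled by iterating Lemma~\ref{decreasing}(a). You instead compute $G/\Phi(G)$ in closed form from the presentation, obtaining the identity $d(G)=\sum_v(d(\G(v))-1)+1+e_0+\dim\ker\partial$, and then convert the bound $d(G)\leq N$ into explicit polynomial bounds on $|\Gamma|$ by rank--nullity and degree counting on $T_\Gamma$; here the hypothesis enters through the observation that a proper subgroup of a cyclic $p$-group lies in its Frattini subgroup, so tree edges between cyclic vertices contribute to $e_0$ or to $\ker\partial$. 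Your version is more self-contained (it bypasses Lemma~\ref{decreasing} and the reduction to segments) and is quantitative, at the cost of a longer bookkeeping argument; the paper's version is shorter but leans on the auxiliary lemma and a somewhat terse reduction. One small point you should make explicit: the step ``every summand on the right is bounded by $N-1$'' needs all summands $d(\G(v))-1$ to be non-negative, i.e.\ all vertex groups non-trivial; this does follow from your hypotheses, since a vertex $v$ with $\G(v)=1$ incident to a tree edge $e$ would give $\G(e)=1=\G(v)$, contradicting $\G(e)\neq\G(v)$.
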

\begin{proof}
 Since the fundamental group $\Pi_1(\Gamma)$ is a free quotient group of $G$ of rank $|E(\Gamma)|-|V(\Gamma)|+1$,  if $|E(\Gamma)|-|V(\Gamma)|\to \infty$, then $d(G)\to \infty$ and we are done.
 Therefore we may assume that $|E(\Gamma)|-|V(\Gamma)|$ is bounded by some constant $k$. Since $G=\textrm{HNN}(\Pi_1(\mathcal{G}, T_\Gamma), \G(e), t_e, e\in \Gamma \backslash T_\Gamma)$ and $\G(e)$'s are cyclic, by Lemma \ref{decreasing} (b) it suffices to show that $d(\Pi_1(\mathcal{G}, T_\Gamma))$ grows.
  Thus we may assume that $\Gamma$ is a tree, i.e., $\Gamma=T_\Gamma$. Let $P$ be the set of
  pending vertices. Since $\G(e) \neq \G(v)$, we have that the free pro-$p$ product
 $\amalg_{l\in P}C_p$ of cyclic groups of order $p$ is a quotient of $\Pi_1(\mathcal{G}, T_\Gamma)$ (one can see this by factoring out the normal subgroup generated by $\G(e)$'s). Thus $|P|$ is bounded by $d(G)$ and so it suffices to prove the result for $T_\Gamma$ being a segment.
  Numerating its edges consequently, we note that the vertex groups of every odd edge generate non-abelian and so non-cyclic group $G_i$, $i=1,3,5\ldots$. Thus we have $\Pi_1(\mathcal{G}, T_\Gamma)=G_1\amalg_{\G(e_2)}G_3\amalg_{\G(e_4)}G_5\cdots$. Now the result follows by Lemma \ref{decreasing} (a).
\end{proof}

\begin{pro}\label{General}
Let $G$ be a finitely generated pro-$p$ group acting on a pro-$p$ tree $T$ with procyclic edge stabilizers.
 Then  $G$ is a surjective inverse limit $G=\varprojlim_{U}\Pi_1(\G_U,\Gamma)$ of fundamental groups of  finite graphs
of pro-$p$ groups
$(\G_U,\Gamma)$ (over the same finite graph $\Gamma$), where the connecting maps $\psi_{U,W}$ map each vertex group $\G_U(v)$
and each
edge group $\G_U(e)$ onto a conjugate of the vertex group $\G_W(v)$ and a conjugate of the edge group $\G_W(e)$ respectively.
Moreover, the maximal (by inclusion) vertex  stabilizers in $G$ are finitely generated and there are only finitely many of
them in $G$  up to conjugation. There are also finitely many edge stabilizers $G_e$, up to conjugation, whose images in
$\Pi_1(\G_U,\Gamma)$ are conjugates of edge groups and any other edge stabilizer is conjugate to a subgroup of one of these
$G_e$.

\end{pro}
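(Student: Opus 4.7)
The plan is to approximate $G$ by its finite $p$-group quotients, apply classical Bass--Serre theory to each, and invoke Lemma~\ref{graph} to force the underlying graph to be finite and eventually constant.

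First, since $T$ is a pro-$p$ tree, I would write $T=\varprojlim_\lambda T_\lambda$ as an inverse limit of finite quotient graphs, where $T_\lambda$ is acted on by a finite quotient $G/N_\lambda$ of $G$ for some cofinal system of open normal subgroups $N_\lambda\trianglelefteq G$. Each finite pro-$p$ tree $T_\lambda$ is an ordinary tree, and the finite $p$-group $G/N_\lambda$ acts on it. Applying the classical Bass--Serre structure theorem to each such action would yield $G/N_\lambda\cong\Pi_1(\G_\lambda,\Gamma_\lambda)$ with $\Gamma_\lambda=T_\lambda/(G/N_\lambda)$, and with vertex and edge groups being stabilizers of chosen lifts. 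Since $G/N_\lambda$ is a $p$-group, this decomposition is automatically a pro-$p$ one; edge groups are cyclic because they are images of procyclic subgroups of $G$.

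Next, to bound $|\Gamma_\lambda|$ uniformly, I would collapse in a maximal subtree of $\Gamma_\lambda$ the edges $e$ with $\G_\lambda(e)=\G_\lambda(v)$ for some incident $v$ (an operation that does not change $\Pi_1$). Then the hypothesis of Lemma~\ref{graph} is satisfied; since $\Pi_1(\G_\lambda,\Gamma_\lambda)$ is a quotient of $G$, its generator rank is at most $d(G)$, so Lemma~\ref{graph} forces $|\Gamma_\lambda|\leq K$ for a constant $K$ depending only on $d(G)$. Because $T/G=\varprojlim\Gamma_\lambda$ and cardinalities are uniformly bounded, the profinite graph $T/G$ is finite, and the inverse system $(\Gamma_\lambda)$ is eventually constant; restricting to a cofinal subsystem I may assume $\Gamma_\lambda=\Gamma$ is a fixed finite graph.

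The connecting maps $\psi_{\lambda,\mu}\colon\Pi_1(\G_\lambda,\Gamma)\to\Pi_1(\G_\mu,\Gamma)$ are induced by the equivariant quotient maps $T_\lambda\to T_\mu$, so they send each vertex (resp.\ edge) group onto a conjugate of the corresponding vertex (resp.\ edge) group; passing to the inverse limit gives $G=\varprojlim\Pi_1(\G_\lambda,\Gamma)$ with the asserted compatibility. For the final statements, the maximal vertex groups $\G(v)=\varprojlim\G_\lambda(v)$ are finitely generated because each $\G_\lambda(v)$ embeds in $\Pi_1(\G_\lambda,\Gamma)$ and hence $d(\G_\lambda(v))\leq d(G)$; since $|V(\Gamma)|$ and $|E(\Gamma)|$ are finite, there are only finitely many stabilizers up to conjugation, and any vertex or edge stabilizer is conjugate to a subgroup of one corresponding to a vertex or edge of $\Gamma$. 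The main obstacle I anticipate is making the edge-collapsing coherent across the inverse system, since different $\lambda$ might collapse different edges, a priori breaking the system. This should be handled by observing that if $G_e\subsetneq G_v$ as subgroups of $G$, then $\G_\lambda(e)\neq\G_\lambda(v)$ for all sufficiently fine $N_\lambda$; hence on a cofinal subsystem the collapsing stabilizes and depends only on the essential coincidences of stabilizers in $G$ itself.
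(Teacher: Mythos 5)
Your first step contains the decisive gap. You propose to realize $G$ as an inverse limit of \emph{finite} $p$-groups $G/N_\lambda$ acting on \emph{finite} trees $T_\lambda$ and to apply the classical Bass--Serre structure theorem to each. This cannot work for two reasons. First, the finite quotient graphs of a pro-$p$ tree are not trees in general (a pro-$p$ tree is defined by exactness of a chain complex, not as an inverse limit of finite trees), and for a general open normal $N_\lambda$ the quotient $T/N_\lambda$ need not even be a pro-$p$ tree, let alone finite. Second, and more fatally, a finite $p$-group acting on a (pro-$p$) tree fixes a vertex (Theorem 3.10 in \cite{Melnikov}), so even where your setup makes sense the resulting decomposition of $G/N_\lambda$ is the trivial one with a single vertex; an inverse limit of trivial decompositions carries no structural information about $G$. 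More fundamentally, the naive pro-$p$ analogue of the Bass--Serre structure theorem is \emph{false} (see \cite{HZ:10}, cited in the introduction), so no argument that reduces it to ``classical Bass--Serre theory on finite approximations'' can succeed; some genuinely pro-$p$ input is required.

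The paper's proof supplies exactly that input. Instead of quotienting by open normal subgroups, one takes, for each open $U\trianglelefteq G$, the subgroup $\tilde U$ generated by the intersections of $U$ with all vertex stabilizers. By Proposition 3.5 and Corollary 3.6 in \cite{Ribes2}, $U/\tilde U$ acts freely on the pro-$p$ tree $T/\tilde U$ and is therefore free pro-$p$, so $G_U:=G/\tilde U$ is an infinite, \emph{virtually free} pro-$p$ group. The decomposition of each $G_U$ as $\Pi_1(\G_U,\Gamma_U)$ with finite vertex groups and cyclic edge groups then comes from Theorem 3.8 of \cite{HZZ} --- a substantial recent structure theorem for virtually free pro-$p$ groups, not from classical Bass--Serre theory. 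From that point on your outline does track the paper (Lemma~\ref{graph} plus $d(G_U)\le d(G)$ bounds $|\Gamma_U|$, one passes to a cofinal system with constant $\Gamma$, and the connecting maps respect vertex and edge groups up to conjugacy via the fixed-point theorem for finite $p$-groups), although the paper must also prove that the induced maps $\G_U(v)\to\G_W(v)^{g}$ are surjective and must treat separately the edges whose limit edge group is trivial; your sketch glosses over both points. But without replacing your first step by the $\tilde U$-construction and the theorem of \cite{HZZ}, the argument does not get off the ground.
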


\begin{proof}  For every open subgroup $U$ of $G$ consider
$\tilde U$, a subgroup generated by all intersections with vertex
stabilizers. Then by Proposition 3.5 and Corollary 3.6 in
\cite{Ribes2}, the quotient group $U/\tilde U$ acts freely on the
pro-$p$ tree $T/\tilde U$ and therefore it is free pro-$p$. Thus
$G_U:=G/\tilde U$ is virtually free pro-$p$. By Theorem 3.8 in
\cite{HZZ} it follows that $G_U$ is the fundamental pro-$p$
group $\Pi_1(\mathcal{G}_U, \Gamma_U)$ of a finite graph of finite
$p$-groups with cyclic edge stabilizers.  For a maximal subtree $T_{\Gamma_U}$ of $\Gamma_U$ we may assume that $\G_U(e)\neq \G_U(v)$  for every edge $e$ in $T_{\Gamma_U}$ and every vertex $v$ incident to  $e$ (if there is an edge $e\in T_\Gamma$ and a vertex $v$ incident to $e$ such that $\G_U(e)= \G_U(v)$, then we just collapse $e$).
Clearly  we have
$G=\varprojlim_{U}G_U$. Since $d(G_U)\leq d(G)$, by
Lemma \ref{graph} it follows that the number of vertices and edges
of $\Gamma_U$ is bounded for each $U$. Since there are only
finitely many finite graphs with bounded number of vertices and
edges, by passing to a cofinal system of $\{\Gamma_U \}$ if
necessary, we can assume that $\Gamma_U=\Gamma$ for each $U$. Fix a maximal subtree $T_\Gamma$ of $\Gamma$ and recall that $G_U=\Pi_1(\mathcal{G}_U, \Gamma)$ has the following presentation:
\begin{displaymath}
\Pi_1(\mathcal{G}_U, \Gamma)=\langle \G_U(v), t_U(e)\mid
rel(\G_U(v)),
\partial_1(g)=\partial_0(g)^{t_U(e)},
\end{displaymath}
\begin{displaymath}
g\in \G_U(e), t_U(e)=1 \  {\rm for}\ e\in
T_\Gamma \rangle.
\end{displaymath}

Now let $U$ and $W$ be open subgroups of $G$ such that $U\leq W$, let $v\in V(\Gamma)$ and let $\psi_{U,W}: G_U \to G_W$ be the natural epimorphism.
 Since $\mathcal{G}_U(v)$ is a finite $p$-group we have that $\psi_{U,W}(\mathcal{G}_U(v))$ also is a finite $p$-group, and so, by  Theorem (3.10) in \cite{Melnikov},
 it stabilizes a vertex (under the action of $G_W= \Pi_1(\mathcal{G}_W, \Gamma)$ on its associated pro-$p$ tree). Hence it is contained in a conjugate of some
 vertex group of $(\mathcal{G}_W, \Gamma)$. Since $\Gamma$ has only finitely many vertices, by passing to a cofinal system if necessary, for $U\leq W$ we have a
 homomorphism
$\mathcal{G}_{U}(v) \to {\mathcal{G}_{W}(v)}^{g_{U,W,v}}$, where
$g_{U,W,v}$ is some element of $\Pi_1(\mathcal{G}_W, \Gamma)$.

\smallskip
Let $e\in E(\Gamma)$ and suppose that $d_0(e)=u$ and $d_1(e)=v$.
Then, since $\mathcal{G}_{U}(e)=\mathcal{G}_{U}(u)\cap
\mathcal{G}_{U}(v)$, for $U\leq W$ we have
\begin{displaymath}
\psi_{U,W}(\mathcal{G}_{U}(e))\leq {\mathcal{G}_{W}(u)}^{g_{U,W,u}} \cap {\mathcal{G}_{W}(v)}^{g_{U,W,v}}\eqno{(1)}
\end{displaymath}

 Thus, as in the case with vertex groups, for $U\leq W$ (if necessary we pass to a cofinal system), the group $\mathcal{G}_{U}(e)$ maps to the group
 $\mathcal{G}_{W}(e)$, up to conjugation. Thus for every $e$ we have an inverse system $\{\G_U(e)^{g_U}\mid g_U\in G_U\}$ of conjugates of $\G_U(e)$. The inverse limit of
 these families, for every $e\in E(\Gamma)$, gives the family $\{\G_e\}$ of groups closed under the conjugation by elements of $G$.  Let us choose a representative
 $G(e)$ of  $\{\G_e\}$. Its images on $\Pi_1(\mathcal{G}_U, \Gamma)$ under the projection maps form the inverse system $\{\mathcal{G}_{U}'(e)\}$ (for each $e \in E(\Gamma))$; this inverse system is surjective by Lemma 2.1 (a)  in \cite{HZZ}, if $G(e)\neq 1$.
   For each $U$, the group $\mathcal{G}_{U}(e)$  is the stabilizer of an edge of the pro-$p$ tree $T/\tilde U$ by Theorem 3.8 in \cite{HZZ} and therefore so is $\mathcal{G}_{U}'(e)$. Hence $G(e)$
   stabilizes an edge of the pro-$p$ tree
   $T=\varprojlim_{U}T/\tilde U$. If $G(e)=1$, then we can factor out the normal closure of  $\G_U(e)$, since by Lemma 2.1 in \cite{HZZ} we have  $G=\varprojlim_{U} G_U/(\G_U(e))^{G_U}$ for such $e$. Thus we may assume that $\{\G'_U(e)\}$ is surjective for every $e$. It follows  that $G(e)$ is the stabilizer in $G$ of an edge
   of $T$.

 \smallskip

 Note that the homomorphism  $\mathcal{G}_{U}(v) \to {\mathcal{G}_{W}(v)}^{g_{U,W,v}}$ is an epimorphism. Indeed, suppose that this homomorphism is not surjective.
  Then, since ${\mathcal{G}_{W}(v)}^{g_{U,W,v}}$ is a finite $p$-groups,  $\psi_{U,W}(\mathcal{G}_{U}(v))$ is contained in a maximal subgroup of
  ${\mathcal{G}_{W}(v)}^{g_{U,W,v}}$, which is normal and of index $p$. Using the fact that the homomorphism
  $\mathcal{G}_{U}(e) \to {\mathcal{G}_{W}(e)}^{h_{U,W,e}}$ is an
  epimorphism, by factoring out the normal closure of all vertex groups
  of $\Pi_1(\mathcal{G}_W, \Gamma)$ except $\G_W(v)$,
 it is easy to see that we have a contradiction, since $\psi_{U,V}$ is an
 epimorphism.

\smallskip
 For every vertex $v$ we have an inverse system $\{\G_U(v)^{g_u}\mid g_u\in G_U\}$ of conjugates of $\G_U(v)$. The inverse
limit of
 these families gives the family $\{\G_v\}$ of groups closed under the conjugation by elements of $G$.  Let us choose a
representative $G(v)$ of  $\{\G_v\}$. Its images on $\Pi_1(\mathcal{G}_U, \Gamma)$ under the projection maps form the
surjective inverse system $\{\mathcal{G}_{U}'(v)\}$.
 For each $U$, the group $\mathcal{G}_{U}(v)$  is the stabilizer of a vertex of the pro-$p$ tree $T/\tilde U$ by Theorem 3.8
 in \cite{HZZ} and therefore $\mathcal{G}_{U}'(v)$ as a conjugate of $\G_U(v)$ is the stabilizer of a vertex of
$T/\tilde U$. Hence $G(v)$ is the stabilizer in $G$ of a vertex of $T=\varprojlim_{U}T/\tilde U$.

\smallskip

 Finally, note that from the fact that $d(G_U)\leq d(G)$ for each $U$ and Lemma \ref{decreasing} it follows easily that
 $G(v)$ is finitely generated for each $v\in V(\Gamma)$. To prove the last statement of the theorem, let $H$ be the stabilizer of a vertex $w$ in $T$. Then $\psi_U(H)$ is the stabilizer of the image of $w$ in $T/\tilde U$ and in particular it is finite.
 Therefore by Theorem 3.10 in \cite{Melnikov} it is conjugate to a subgroup of a vertex group $\G_U(v)$ and so to a
subgroup of  $\G'_U(v)$. Therefore $H$ is conjugate to a subgroup of $G(v)$. If $H$ is the stabilizer of an edge of $T$ one uses a similar argument combined with equation (1).
    This finishes the proof of the proposition.

\end{proof}

We now introduce two separate subsections to be treated  separately: the case of acylindrical action (that will be used in the rest
of the
paper)
and the case when $G$ is generated by its vertex stabilizers.

\subsection{Acylindrical action}

\begin{definition} Let $G$ be a pro-$p$ group acting on a pro-$p$ tree $T$. We say that this action is $n$-acylindrical if
for every non-trivial edge stabilizer $G_e$ the subtree of fixed points $T^{G_e}$ (cf. Theorem 3.7 in \cite{Ribes2})
 has diameter $n$. Note that
by Corollary 4 in \cite{HZ} this means that any element $1\neq
g\in G$ can fix at most $n$ edges  in any (profinite) geodesic
$[v,w]$ of $S(G)$.
\end{definition}

\begin{lem}\label{acylindric} Let $n$ be a natural number and $G$ be a finitely generated pro-$p$ group acting $n$-acylindrically on a pro-$p$ tree $T$
with procyclic edge stabilizers such that $T/G$ has finite
diameter.
 Then $G$ is the fundamental pro-$p$ group of a finite graph of
pro-$p$ groups $(\G, \Delta)$ with procyclic edge groups and
finitely generated vertex groups. Moreover, the vertex and edge
 groups of $(\G, \Delta)$ are  stabilizers of certain vertices and edges of $T$ respectively.\end{lem}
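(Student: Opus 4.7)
The plan is to bootstrap the inverse-limit description furnished by Proposition~\ref{General} into a single graph-of-pro-$p$-groups decomposition. Proposition~\ref{General} already expresses $G$ as $\varprojlim_U\Pi_1(\mathcal{G}_U,\Gamma)$ over a fixed finite graph $\Gamma$, with maximal vertex stabilizers finitely generated and edge stabilizers procyclic; the finite-diameter hypothesis on $T/G$ is used to identify $\Gamma$ with $T/G$.

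Next, I would construct a candidate graph of pro-$p$ groups $(\mathcal{G},\Delta)$ directly from the action on $T$. Fix a maximal subtree $T_\Gamma$ of $\Gamma$ and a lift $\sigma:\Gamma\to T$ whose image on $T_\Gamma$ is a subtree of $T$ and satisfies $d_0(\sigma(e))=\sigma(d_0(e))$ for each $e\notin T_\Gamma$. For such $e$, pick $t_e\in G$ with $t_e\cdot d_1(\sigma(e))=\sigma(d_1(e))$, and set $t_e=1$ for $e\in T_\Gamma$. Put $\Delta=\Gamma$, $\mathcal{G}(v):=G_{\sigma(v)}$, $\mathcal{G}(e):=G_{\sigma(e)}$, with the natural edge-to-vertex inclusions, twisted by $t_e$ on the $d_1$-side when $e\notin T_\Gamma$. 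By the last assertion of Proposition~\ref{General} the vertex groups are finitely generated and the edge groups are procyclic, as required.

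It remains to verify that the induced homomorphism $\Phi:\Pi_1(\mathcal{G},\Delta)\to G$ coming from the universal property of $\Pi_1$ is an isomorphism. For surjectivity, I would observe that $\Phi$ hits every vertex stabilizer $\mathcal{G}(v)$ and every stable letter $t_e$, so the composition with $G\to\Pi_1(\mathcal{G}_U,\Gamma)$ is surjective for every $U$; density plus compactness then give $\Phi(\Pi_1(\mathcal{G},\Delta))=G$. Injectivity is the main obstacle. The plan is to use $n$-acylindricity to choose the conjugators $g_{U,W,v}$ (and the analogous ones for edges) from the proof of Proposition~\ref{General} coherently, so that $\mathcal{G}(v)$ and $\mathcal{G}(e)$ are canonically identified with $\varprojlim_U\mathcal{G}_U(v)$ and $\varprojlim_U\mathcal{G}_U(e)$ respectively; here $n$-acylindricity is essential because the conjugator drift between levels lies in the pointwise stabilizer of a subtree, whose diameter is bounded by $n$, so the residual ambiguity is absorbed into the intended stabilizer itself rather than into a strictly larger group. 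Once this matching is in place, $\Pi_1(\mathcal{G},\Delta)$ is itself an inverse limit $\varprojlim_U\Pi_1(\mathcal{G}_U,\Gamma)$, which equals $G$ by Proposition~\ref{General}, completing the proof.
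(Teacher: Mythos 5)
Your plan founders at the very first step: you ``identify $\Gamma$ with $T/G$'' and then treat $T/G$ as a finite graph, choosing a lift $\sigma:\Gamma\to T$ and finitely many stable letters $t_e$. But the hypothesis is only that $T/G$ has finite \emph{diameter}, not that it is finite; a profinite graph of diameter $2$ can have a Cantor set of edges, and nothing in Proposition~\ref{General} says that the quotient graph $T/G$ coincides with the finite graph $\Gamma$ over which the inverse limit is taken. This is precisely the difficulty the lemma has to overcome, and the paper's proof is organized around it: one uses Proposition~\ref{General} to get finitely many maximal vertex stabilizers $G_{w_1},\dots,G_{w_m}$ up to conjugation, observes that $\pi_1(T/G)$ is the pro-$p$ completion of the finitely generated free group $\pi_1^{abs}(T/G)$, and then chooses a \emph{finite connected subgraph} $\Delta$ of $T/G$ containing the images $v_1,\dots,v_m$ of the $w_i$ and a set of simple circuits freely generating $\pi_1^{abs}(T/G)$. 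For a connected component $\Omega$ of the preimage of $\Delta$ one has $\Omega/\mathrm{Stab}_G(\Omega)=\Delta$, so the standard structure theorem for actions with finite quotient graph (Proposition 4.4 in [ZM:90]) applies to give $\mathrm{Stab}_G(\Omega)=\Pi_1(\G,\Delta)$; the remaining content is that $\mathrm{Stab}_G(\Omega)=G$, which holds because $G$ is generated (check modulo the Frattini subgroup) by the $G_{u_i}$ together with lifts of the generating circuits. None of this reduction appears in your proposal, and without it there is no finite graph to build the decomposition on.

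Two further points. First, your treatment of injectivity --- choosing the conjugators $g_{U,W,v}$ ``coherently'' via $n$-acylindricity so that $\Pi_1(\G,\Delta)$ becomes an inverse limit of the $\Pi_1(\G_U,\Gamma)$ --- is only a sketch of an idea, and it is not needed: once one works with the cofinite action of $\mathrm{Stab}_G(\Omega)$ on $\Omega$, the cited structure theorem already yields a genuine (proper) decomposition, so no separate injectivity argument through the inverse system is required. Second, your claim that the vertex groups $G_{\sigma(v)}$ are finitely generated ``by the last assertion of Proposition~\ref{General}'' is too quick: that assertion concerns the finitely many \emph{maximal} vertex stabilizers, and an arbitrary closed subgroup of a finitely generated pro-$p$ group need not be finitely generated; finite generation of all the vertex groups of $(\G,\Delta)$ has to be extracted from the finiteness of $\Delta$ together with Lemma~\ref{decreasing}, not read off directly.
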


 \begin{proof}
 Note first that $T/G$ is connected as an abstract graph (see Corollary 4 in \cite{HZ}) and
 therefore every finite cover of it is also connected. It follows
 that $\pi_1(T/G)$ is just the pro-$p$ completion of the ordinary
 fundamental group $\pi_1^{abs}(T/G)$ (see Proposition 2.1 in \cite{Z}). By Lemma \ref{General} there are finitely many maximal stabilizers of vertices
 $G_{w_1}, G_{w_2}, \ldots G_{w_m}$ up to conjugation. 
   Let $C_1,C_2,\ldots C_n$ be
 simple circuits that are free generators of $\pi_1^{abs}(T/G)$
 and let $v_1,v_2,\ldots v_m$ be the images of $w_1,\ldots, w_m$
 in $T/G$. Put $\Delta$ to be a minimal connected subgraph of $T/G$ containing $C_1,C_2,\ldots C_n$
 and  $v_1,v_2,\ldots v_m$; clearly $\Delta$ is finite. By the pro-$p$
version of Lemma 2.14 in \cite{CZ} for any connected component
$\Omega$ of the preimage of $\Delta$ in $T$ and its setwise
stabilizer $Stab_G(\Omega)$ we have
$\Omega/Stab_G(\Omega)=\Delta$. By  Proposition 4.4 in
\cite{ZM:90} a pro-$p$ group acting on a pro-$p$ tree cofinitely
is the fundamental group of a finite graph of groups in a standard
manner, i.e., in our case $Stab_G(\Omega)=\Pi_1(\G,\Delta)$. More
precisely,
   $\Delta$ admits a connected transversal $D$ in $\Omega$ with
$d_0(e)\in D$ for every $e\in D$. This
gives the standard structure of a graph of pro-$p$ groups
$(\G,\Delta)$ on $\Delta$, where the vertex and edge groups are
stabilizers of vertices  and edges of $D$  and we have
$$\Pi_1(\G,\Delta)=\langle G_v, x_e\in Stab_G(\Omega)\mid v\in V(D), x_ed_1(e)\in D,$$
$$\ {\rm for}\ e\in E(D)
\ {\rm with}\ d_1(e)\not\in D\rangle. $$ 
Let $u_1,\ldots u_m$  be the preimages of $v_1\ldots v_m$ in
$D$. Then  $G_{u_1}, G_{u_2}, \ldots G_{u_m}$ are conjugates of $G_{w_1}, G_{w_2}, \ldots G_{w_m}$, so that every vertex stabilizer of $G$ up to conjugation is contained in one of them.
Therefore  $G$ is generated by $\pi_1^{abs}(T/G)$ and $G_{u_1}, G_{u_2}, \ldots G_{u_m}$ (see it modulo Frattini). Thus we have $$G=\langle G_{u_i}, x_e\in Stab_G(\Omega)\mid
i=1,\ldots m, x_ed_1(e)\in D,$$
$$\ {\rm for}\ e\in E(D)
\ {\rm with}\ d_1(e)\not\in D\rangle$$ and so
$G=\Pi_1(\G,\Delta)$.
 \end{proof}

\begin{thm}\label{StructureTheorem-General}
Let $n$ be a natural number and $G$ be a finitely generated pro-$p$ group acting $n$-acylindrically on a pro-$p$ tree $T$
with procyclic edge stabilizers.
 Then $G$ is the fundamental pro-$p$ group of a finite graph of
pro-$p$ groups $(\G, \Gamma)$ with procyclic edge groups and finitely generated vertex groups. Moreover, the vertex and edge
 groups of $(\G, \Gamma)$ are  stabilizers of certain vertices and edges of $T$ respectively, and stabilizers of vertices
 and edges of $T$ in $G$ are conjugate to subgroups of vertex and edge groups of $(\G, \Gamma)$ respectively.

\end{thm}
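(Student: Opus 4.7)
The plan is to reduce the theorem to Lemma \ref{acylindric} by passing to a $G$-invariant pro-$p$ subtree $T' \subseteq T$ of finite quotient diameter that captures all stabilizers up to conjugacy.

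First, I would apply Proposition \ref{General} to the action of $G$ on $T$. This yields finitely many conjugacy classes of maximal vertex stabilizers $G(w_1), \ldots, G(w_m)$ and finitely many conjugacy classes of distinguished edge stabilizers $G(e_1), \ldots, G(e_k)$, all finitely generated, and each realized by an actual vertex $w_i \in V(T)$ or edge $e_j \in E(T)$. By the last clause of the proposition, every vertex stabilizer (respectively edge stabilizer) of $T$ in $G$ is conjugate to a subgroup of some $G(w_i)$ (respectively some $G(e_j)$).

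Second, let $T'$ be the smallest $G$-invariant closed connected profinite subgraph of $T$ containing the finite set
\[
\{w_1, \ldots, w_m\} \cup \{d_0(e_j), d_1(e_j) : 1 \leq j \leq k\}.
\]
As a closed convex (and therefore connected) subspace of a pro-$p$ tree, $T'$ is itself a pro-$p$ tree on which $G$ still acts $n$-acylindrically with procyclic edge stabilizers. The quotient $T'/G$ is the convex hull, inside the abstract graph $T/G$, of the finitely many images of these points, hence a finite graph; in particular $T'/G$ has finite diameter.

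Third, I would invoke Lemma \ref{acylindric} for the $n$-acylindrical action of $G$ on $T'$. This produces a decomposition $G = \Pi_1(\G, \Gamma)$ as the fundamental pro-$p$ group of a finite graph of pro-$p$ groups whose vertex and edge groups are stabilizers in $G$ of certain vertices and edges of $T'$ (and hence of $T$), with finitely generated vertex groups and procyclic edge groups. For the final clause, any stabilizer in $G$ of a vertex (respectively edge) of $T$ is conjugate into some $G(w_i)$ (respectively some $G(e_j)$) by Proposition \ref{General}; and each $G(w_i)$, being by construction the stabilizer of a vertex of $T'$, is contained in a conjugate of a vertex group of $(\G, \Gamma)$, with the analogous statement for the $G(e_j)$ following from the inclusion (1) in the proof of Proposition \ref{General}.

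The main obstacle is the second step: one must verify in the profinite setting that the closed $G$-invariant convex hull of a finite set of points in a pro-$p$ tree is itself a pro-$p$ tree whose quotient is an abstract finite graph. The abstract Bass--Serre intuition has to be transferred to the pro-$p$ framework, relying on basic convexity properties of pro-$p$ trees together with the cofinality arguments underlying Proposition \ref{General}. Once $T'$ is in place and its quotient is shown to have finite diameter, the rest of the proof is a direct application of the previous lemma and proposition.
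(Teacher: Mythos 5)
Your overall strategy---reduce to Lemma \ref{acylindric} by passing to a $G$-invariant subtree with finite-diameter quotient---is natural, but your Step 2 is exactly where the real difficulty of the theorem sits, and the justification you give for it does not work. The assertion that the minimal $G$-invariant closed connected subgraph $T'$ containing the finitely many distinguished vertices satisfies ``$T'/G$ is the convex hull in $T/G$ of the images of these points, hence finite'' is not correct: $T'$ must contain the geodesics $[w_i, g w_j]$ for \emph{all} $g\in G$, and these project to walks in $T/G$ of a priori unbounded length, so $T'/G$ is in general much larger than a minimal connected subgraph of $T/G$ containing the images (whose preimage in $T$ need not be connected, nor need any of its components be $G$-invariant). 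In abstract Bass--Serre theory one bounds the quotient of the minimal invariant subtree by writing each $g$ as a finite word in a generating set; as the authors stress in the introduction, precisely this fails for pro-$p$ groups, and no bound on the diameter of $T'/G$ follows from finite generation of $G$. A telling symptom: your argument uses acylindricity only to pass it down to $T'$, and the proof of Lemma \ref{acylindric} itself only exploits the finite-diameter hypothesis; so if your Step 2 were correct, the structure theorem would hold for arbitrary actions with procyclic edge stabilizers, whereas the paper only claims it under acylindricity or generation by vertex stabilizers.

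The paper's proof uses acylindricity exactly where you need finiteness. It considers $\bigcup_{G_e\neq 1}T^{G_e}$: by $n$-acylindricity each fixed subtree $T^{G_e}$ has diameter at most $n$ and contains the geodesic from $e$ to an edge fixed by one of the finitely many (up to conjugacy) maximal edge stabilizers supplied by Proposition \ref{General}, so the closure $\Delta$ of the image of this union in $T/G$ has finite diameter and finitely many connected components. Collapsing the components $\Omega_\alpha$ of its preimage yields a pro-$p$ tree on which $G$ acts with \emph{trivial} edge stabilizers, whence, by Proposition 2.12 of \cite{HZZ}, a free pro-$p$ product decomposition of $G$ into the setwise stabilizers $Stab_G(\Omega_\alpha)$, the remaining vertex stabilizers, and a free factor; Lemma \ref{acylindric} is then applied to each $Stab_G(\Omega_\alpha)$ acting on $\Omega_\alpha$ (whose quotient is $\Delta_\alpha$, of finite diameter by acylindricity), not to $G$ acting on a single invariant subtree. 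To rescue your approach you would have to prove the finite-diameter claim for $T'/G$, and any such proof would have to bring in acylindricity in essentially the way the paper does.
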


\begin{proof}
 \medskip
By Proposition \ref{General} there are only finitely many maximal
by inclusion edge and  vertex stabilizers in $G$ up to
conjugation. Then, since the action is  $n$-acylindrical,
$T^{G_e}$ has diameter at most  $n$  for every non-trivial edge
stabilizer $G_e$. It follows that $\bigcup_{G_e\neq 1}
T^{G_e}/G$ has finite diameter. Indeed, since there are only
finitely many maximal edge stabilizers up to conjugation, it
suffices to show that for a maximal edge stabilizer
$G_{me'}$ stabilizing an edge $e'$, the tree $\bigcup_{G_e\leq
G_{me'}} T^{G_e}$ has finite diameter. But for $G_e\leq G_{me'}$
the geodesic $[e,e']$ is stabilized by $G_e$ (cf. Corollary 3.8 in
\cite{Ribes2}) and so has length not more than $n$.

 Thus $\bigcup_{G_e\neq 1}
T^{G_e}/G$ has finite diameter and finitely many connected components. It follows that the closure $\Delta$ of it has also finite diameter (see appendix in \cite{HZ}) and finitely many connected components.

 Let $\Delta_\alpha$ be a connected component of $\Delta$. By the pro-$p$
version of Lemma 2.14 in \cite{CZ} for any connected component
$\Omega_\alpha$ of the preimage of $\Delta_\alpha$ in $T$ and its
setwise stabilizer $Stab_G(\Omega_\alpha)$ we have
$\Omega_\alpha/Stab_G(\Omega_\alpha)=\Delta_\alpha$.   Collapsing
all connected components of the preimage of $\Delta$ in $T$, by
the Proposition on page 486 in  \cite{Zalesskii:89} we get a
pro-$p$ tree $\bar T$ on which $G$ acts with trivial edge
stabilizers (since $\bar T^{G_e}$ is connected for every $e\in
E(\bar T)$ by Theorem 3.7 in \cite{Ribes2}), so by Proposition
2.12 in \cite{HZZ} we have that  $G$ is a free pro-$p$ product
$$G=(\coprod_{\alpha} Stab_G(\Omega_\alpha))\amalg (\coprod_{v\not\in \bigcup_\alpha D_\alpha} G(v))\amalg \pi_1(\bar T/G).$$

 Therefore $Stab_G(\Omega_\alpha), \pi_1(\bar T/G)$ and  $G(v)$ for $v\not\in \bigcup_\alpha D_\alpha$ are finitely generated.

 By Lemma \ref{acylindric} we have that
 $Stab_G(\Omega_\alpha)=\Pi_1(\G,\Delta_\alpha)$ is the fundamental group of a finite graph
of groups in a standard manner, where the vertex and edge groups
are stabilizers of vertices  and edges of $D_\alpha$  and  so
$$\Pi_1(\G,\Delta_\alpha)=\langle G_v, x_e\in Stab_G(\Omega_\alpha)\mid v\in V(D_\alpha), x_ed_1(e)\in D_\alpha,$$
$$\ {\rm for}\ e\in E(D_\alpha)
\ {\rm with}\ d_1(e)\not\in D_\alpha\rangle.$$

Since the free pro-$p$ product of the fundamental pro-$p$ groups of finitely many finite graphs of pro-$p$ groups is again the fundamental pro-$p$ group of a finite graph of pro-$p$ groups, we have the needed structure of the fundamental pro-$p$ group of a finite graph of pro-$p$ groups on $G$ in this case.

The last part of the theorem follows from Proposition \ref{General}.
\end{proof}

\subsection{Generation by stabilizers}\hskip10cm

\medskip
If $G$ is generated by vertex stabilizers we can prove the structure theorem without $n$-acylindricity. To accomplish this we need first the following.

\begin{lem}\label{presentation}
Let $(\mathcal{G}, \Gamma)$ be a finite tree of finite $p$-groups
 and let
$G=\Pi_1(\mathcal{G}, \Gamma)$ be the fundamental pro-$p$ group of
$(\mathcal{G}, \Gamma)$. Let  $G(\Gamma)=\coprod_{v\in V(\Gamma)} \G(v)$ be a free pro-$p$ product
and let $\psi:G(\Gamma)\longrightarrow G$ be the
epimorphism sending $\G(v)$  to their copies in $G$.  Suppose there is a collection
$\{G(v)=\G(v)^{g_v}, v\in V(\Gamma), g_v\in G(\Gamma)\}$ of conjugates of free factors of $G(\Gamma)$
  and a collection $\{G(e)=\G(e)^{g_e}, e\in E(\Gamma), g_e\in G\}$ of conjugates of edge groups of $G$
 such that
$\psi(G(d_1(e)))\cap
\psi(G(d_0(e)))=G(e)$. Then the kernel
of $\psi$ is generated by the set of elements
$\psi_{1,e}^{-1}(g^{-1})\psi_{0,e}^{-1}(g)$,
where $g\in G(e)$ and $\psi_{i,e}=\psi_{|G(d_i(e))}$, $i=0,1$.
\end{lem}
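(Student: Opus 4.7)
The plan is to compare $S$ with the canonical relators arising from the standard pushout presentation of $G$. Write $N$ for the closed normal subgroup of $G(\Gamma)$ generated by
$S := \{\psi_{1,e}^{-1}(g^{-1})\psi_{0,e}^{-1}(g) : g\in G(e),\ e\in E(\Gamma)\}$. Applying $\psi$ to any $s_g \in S$ visibly gives $g^{-1}\cdot g = 1$, so $N \subseteq \ker\psi$ trivially; the content of the lemma is the reverse inclusion.

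Since $\Gamma$ is a finite tree of finite $p$-groups and the vertex groups embed in $G$, the standard iterated amalgamated pro-$p$ product presentation identifies $\ker\psi$ with the closed normal closure in $G(\Gamma)$ of the \emph{canonical relators}
\[
r_{e,\gamma} := \partial_1(\gamma)\,\partial_0(\gamma)^{-1}, \qquad \gamma \in \G(e),\ e \in E(\Gamma),
\]
where $\partial_i(\gamma)$ is viewed inside $\G(d_i(e)) \subseteq G(\Gamma)$. It therefore suffices to show $r_{e,\gamma} \in N$ for every such $e$ and $\gamma$.

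Fix $e$ and $\gamma$; pick any preimage $\tilde g_e \in G(\Gamma)$ of $g_e \in G$ under $\psi$, and set $g := g_e^{-1} \gamma\, g_e \in G(e)$ (identifying $\gamma$ with its image in $G$). Both $y_i := \psi_{i,e}^{-1}(g) \in G(d_i(e))$ and $\tilde g_e^{-1}\partial_i(\gamma)\tilde g_e \in G(\Gamma)$ are preimages of $g$ under $\psi$. The technical heart of the proof is the congruence
\[
y_i \equiv \tilde g_e^{-1}\,\partial_i(\gamma)\,\tilde g_e \pmod{N}, \qquad i\in\{0,1\}, \qquad (\ast)
\]
from which
\[
s_g \,=\, y_1^{-1}y_0 \,\equiv\, \tilde g_e^{-1}\,r_{e,\gamma}^{-1}\,\tilde g_e \pmod{N};
\]
since $s_g \in N$ and $N$ is normal, this yields $r_{e,\gamma} \in N$, completing the proof.

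To establish $(\ast)$ we exploit the strength of the hypothesis $\psi(G(d_0(e))) \cap \psi(G(d_1(e))) = G(e)$, which ensures that $S$ contains a relator $s_{g'}$ for \emph{every} element $g'$ of the full intersection $G(e)$. Writing $y_i = g_{d_i(e)}^{-1} x_i g_{d_i(e)}$ with $x_i \in \G(d_i(e))$ the unique preimage in $\G(d_i(e))$ of the appropriate conjugate of $\gamma$ in $G$, and decomposing $\tilde g_e$ along a (possibly infinite) reduced form in the free pro-$p$ product $G(\Gamma)=\coprod_v \G(v)$, the plan is to absorb the letters of $\tilde g_e$ one at a time using the relators $s_{g'}$. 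Closedness of $N$ and continuity of multiplication then lift finite-word reasoning to the pro-$p$ setting. The main obstacle is this letter-by-letter absorption step: each letter $\xi \in \G(v)$ of $\tilde g_e$ must be matched with a specific $g' \in G(e)$ whose relator encodes the effect of conjugation by $\xi$, and it is precisely the intersection hypothesis that makes such a matching possible. Once it is carried out, induction on the length of the reduced form of $\tilde g_e$ yields $(\ast)$.
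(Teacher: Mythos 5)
Your reductions are fine as far as they go: $N\subseteq\ker\psi$ is immediate, $\ker\psi$ is indeed the closed normal closure of the canonical amalgamation relators $r_{e,\gamma}$ (there are no stable letters since $\Gamma$ is a tree), and the algebra deducing $r_{e,\gamma}\in N$ from the congruence $(\ast)$ is correct. But $(\ast)$ is not a postponed technicality: ranging over all $e$ and $\gamma$ it is equivalent to the lemma itself, and the method you propose for it does not go through. First, a general element $\tilde g_e$ of the free pro-$p$ product $\coprod_v\G(v)$ has no reduced form, finite or infinite, on which to run an induction; only the dense abstract free product does, and approximating $\tilde g_e$ by finite words changes $\psi(\tilde g_e)$, hence changes $g=g_e^{-1}\gamma g_e$ and the targets $y_i$, so closedness of $N$ does not carry the finite-word statements to the limit. (The authors explicitly remark that such word-by-word arguments are unavailable in the pro-$p$ setting.) Second, even for a finite word $\xi_1\cdots\xi_k$ the proposed absorption has no mechanism: the relators $s_{g'}$ only identify the two $\psi$-preimages of an element $g'\in G(e')$ inside the conjugated vertex groups $G(d_0(e'))$ and $G(d_1(e'))$; to transport a partially conjugated element across an edge of $\Gamma$ you must already know that it lies in the relevant edge group of the quotient $\Pi=G(\Gamma)/N$, and that normal-form information about $\Pi$ is precisely what the lemma is meant to establish. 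The ``matching'' of a letter $\xi$ with a relator $s_{g'}$ is therefore circular at its core.

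The paper's proof is global and avoids normal forms entirely. It observes that the relators $s_g$ endow $\Pi=G(\Gamma)/N$ with the structure of the fundamental pro-$p$ group of a finite tree of finite $p$-groups $(\G',\Gamma)$ whose vertex and edge groups have the same orders as those of $(\G,\Gamma)$. Taking an open free pro-$p$ subgroup $F\le G$, the preimage $f^{-1}(F)\le\Pi$ is open of the same index and torsion-free, hence free pro-$p$, and the Euler-characteristic (index) formula forces $\mathrm{rank}(f^{-1}(F))=\mathrm{rank}(F)$. A surjection of free pro-$p$ groups of the same finite rank is an isomorphism, so $\ker f=1$, i.e.\ $\ker\psi=N$. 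To repair your outline you would need to replace the absorption step by a counting argument of this kind, or otherwise establish injectivity of $f$ by independent means.
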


\begin{proof} Note that
$\psi(\psi_{1,e}^{-1}(g^{-1})\psi_{0,e}^{-1}(g))=g^{-1}g=1$ and
so the elements $\psi_{1,e}^{-1}(g^{-1})\psi_{0,e}^{-1}(g)$
belong to the kernel of $\psi$. This means that $\psi$ factors via the natural quotient homomorphism
$\pi:G(\Gamma)\longrightarrow \Pi$ modulo the normal closure of the elements
$\psi_{1,e}^{-1}(g^{-1})\psi_{0,e}^{-1}(g)$, i.e. there exists
 a natural epimorphism $f:\Pi\longrightarrow G$ such that $f\pi=\psi$.

Define now a tree of pro-$p$ groups $(\G',\Gamma)$  as follows. Put
$\G'(v)=\pi(G(v))$, $\G'(e)=\pi(\psi_{0,e}^{-1}(G(e))$  and define
$\partial_0,\partial_1$ to be the natural embeddings of $\G'(e)$
into $\G'(d_0(e))$ and into $\G'(d_1(e))$. Then  the relations
$$\psi_{1,e}^{-1}(g^{-1})\psi_{0,e}^{-1}(g),$$
where $g\in \psi(G(e))$,
define on  $\Pi$ the structure of  the fundamental group $\Pi_1(\G',\Gamma)$ of the graph $(\G',\Gamma)$ of groups.

Let $F$ be an open free pro-$p$ subgroup of $G$. Then $f^{-1}(F)$ is an open free pro-$p$ subgroup of $\Pi$ of the same
index as the index of $F$ in $G$. Then by the Euler characteristic formula (cf. Exercise 3 on page 123 in \cite{Serre-77}),
that holds  here since our groups are the pro-$p$ completions of the corresponding abstract groups, we have
  $$rank(F)-1=|G:F|(\sum_{e\in E(\Gamma)} 1/|\G(e)|- \sum_{v\in V(\Gamma)} 1/|\G(v)|)=$$
  $$|\Pi:f^{-1}(F)| (\sum_{e\in E(\Gamma)}1/|G(e)| - \sum_{v\in V(\Gamma)} 1/|G(v)|)=rank(f^{-1}(F))-1.$$ Thus
the free pro-$p$ groups $F$ and $f^{-1}(F)$ have the same rank and therefore they are isomorphic. Since the kernel of $f$
is torsion free, $f$ is an isomorphism, as desired.
\end{proof}

\begin{thm} Let $G$ be a finitely generated pro-$p$ group acting on a pro-$p$ tree $T$ with procyclic edge stabilizers.
 Suppose $G$ is generated by its vertex stabilizers. Then $G$ is the fundamental pro-$p$ group of a finite tree of
pro-$p$ groups $(\G, \Gamma)$ with procyclic edge groups and finitely generated vertex groups.
Moreover, the vertex and edge groups of $(\G, \Gamma)$ are  stabilizers of certain vertices and edges of $T$ respectively, and stabilizers of vertices and edges of $T$ in $G$ are conjugate to subgroups of vertex and edge groups of $(\G, \Gamma)$ respectively.
\end{thm}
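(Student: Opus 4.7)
The plan is to combine Proposition \ref{General} with Lemma \ref{presentation}, carrying out the latter at every finite level of an inverse system and passing to the limit. First, I would invoke Proposition \ref{General} to express $G=\varprojlim_{U}\Pi_1(\G_U,\Gamma)$ as a surjective inverse limit over a common finite graph $\Gamma$, with finitely generated representatives $G(v)$ ($v\in V(\Gamma)$) of the maximal vertex stabilizers and procyclic representatives $G(e)$ ($e\in E(\Gamma)$) of the edge stabilizers, each chosen as actual stabilizers of vertices or edges of $T$.

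Next, I would show that $\Gamma$ is a tree. Since every vertex stabilizer in $G$ is conjugate to a subgroup of some $G(v)$ and $G$ is generated by its vertex stabilizers, $G$ is topologically generated by the conjugates of the $G(v)$. Pushing this down, each quotient $G_U=\Pi_1(\G_U,\Gamma)$ is generated by conjugates of the vertex groups $\G_U(v)$, so killing the normal closure of the vertex groups gives a trivial group. But this quotient is the free pro-$p$ group $\pi_1(\Gamma)$ of rank $|E(\Gamma)|-|V(\Gamma)|+1$; triviality of $\pi_1(\Gamma)$ forces $\Gamma$ to be a tree.

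I would then coordinate the choices of representatives: for each $e\in E(\Gamma)$, the edge group $G(e)$ stabilizes an edge $\tilde e$ of $T$, and I would choose $G(d_0(e))$ and $G(d_1(e))$ to stabilize the two endpoints of $\tilde e$. This is possible because $\Gamma$ is a tree: one can pick a root vertex in $\Gamma$, select a preimage in $T$, and propagate the choices outward along adjacent edges of $\Gamma$, mimicking the transversal construction at the end of the proof of Theorem \ref{StructureTheorem-General}. With these coordinated choices, standard pro-$p$ tree theory (edge stabilizer equals intersection of endpoint stabilizers) yields $G(e)=G(d_0(e))\cap G(d_1(e))$, and this identity is preserved after projecting to any $G_U$ via the standard pro-$p$ tree $S_U$ of $G_U$. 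This is exactly the hypothesis of Lemma \ref{presentation}.

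Finally, applying Lemma \ref{presentation} to each $G_U$ yields a presentation of $G_U$ as the fundamental pro-$p$ group of a tree of finite $p$-groups over $\Gamma$ with vertex groups the images of the $G(v)$ and edge groups the images of the $G(e)$. These presentations are compatible with the surjective connecting maps of Proposition \ref{General}, so passing to the inverse limit produces a presentation of $G=\Pi_1(\G,\Gamma)$ with $\G(v)=G(v)$ and $\G(e)=G(e)$; the ``moreover'' clause on conjugacy of arbitrary vertex and edge stabilizers is inherited directly from Proposition \ref{General}. The main obstacle I expect is the coherent choice of representatives: one must simultaneously satisfy the intersection identity $G(e)=G(d_0(e))\cap G(d_1(e))$ for \emph{all} edges $e\in E(\Gamma)$ and check that it persists after projection to each $G_U$, which is exactly where the tree structure of $\Gamma$ and the standard compatibility of stabilizers along $T\to T/\tilde U$ are essential.
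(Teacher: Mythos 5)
Your overall architecture is the same as the paper's: invoke Proposition \ref{General}, arrange the representatives so that $G(e)=G(d_0(e))\cap G(d_1(e))$, then apply Lemma \ref{presentation} at each finite level and pass to the inverse limit. Your argument that $\Gamma$ is a tree (kill the normal closure of the vertex groups and observe that the resulting free pro-$p$ quotient $\pi_1(\Gamma)$ must be trivial) is correct and is a reasonable substitute for the paper's citation of Proposition 3.5 in \cite{Ribes2}. However, the step you yourself flag as the main obstacle --- the coherent choice of representatives --- is where your argument has a genuine gap. You propose to lift each $e\in E(\Gamma)$ to an edge $\tilde e$ of $T$ stabilized by $G(e)$ and to take the stabilizers of the two endpoints of $\tilde e$ as the vertex representatives $G(d_0(e))$ and $G(d_1(e))$. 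It is true that in a pro-$p$ tree $G_{\tilde e}=G_{d_0(\tilde e)}\cap G_{d_1(\tilde e)}$ (an element fixing two vertices fixes the geodesic between them, Corollary 3.8 in \cite{Ribes2}), but these endpoint stabilizers need not be conjugates of the \emph{full} representatives produced by Proposition \ref{General}: they contain $G_{\tilde e}$ and are contained in a conjugate of some $G(v)$, and that is all. The tree $T$ is an arbitrary pro-$p$ tree on which $G$ acts, not the standard tree of the decomposition being built. For a concrete failure, take $G=A\amalg_C B$ acting on the barycentric subdivision of its standard tree: every edge of $T$ has one endpoint whose stabilizer is only a conjugate of $C$, and no edge of $T$ has its two endpoint stabilizers conjugate to $A$ and to $B$. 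With your choice the free product $\coprod_v G(v)$ becomes $A\amalg C$, which maps onto $A\neq G$, so the hypothesis of Lemma \ref{presentation} (that $\psi$ is an epimorphism with the prescribed vertex groups) fails. The identity $C=A\cap B$ does hold, but it cannot be read off from a single edge of $T$. The same defect undermines the ``propagate outward from a root'' step: the $G$-orbit of edges above the next edge of $\Gamma$ need not contain an edge of $T$ incident to the vertex of $T$ you have already fixed.

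The paper's proof of its Claim repairs exactly this point by working at the finite levels rather than in $T$: for each open $U$ the containment $\G_U(e)\leq\G_U(v)$ holds by construction in the graph-of-groups structure of $G_U$, so the set $X_U$ of elements $x_U\in G_U$ with $\G'_U(e)^{x_U}\leq\G'_U(v)$ is a non-empty compact set, these sets form an inverse system, and an element $x$ of the (non-empty) inverse limit conjugates $G(e)$ into $G(v)$; a second such argument at the other endpoint $w$ produces $y$ with $G(v)\cap G(w)^{y}=G(e)$, because the corresponding identity holds at every finite level. Extending a maximal subtree of $\Gamma$ on which the identity already holds, edge by edge, then completes the Claim. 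Your proof would be correct if you replaced the geometric endpoint argument by this (or an equivalent) compactness argument on conjugators; as written, the coordination step does not go through.
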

\begin{proof} By Proposition \ref{General} the group $G$  is a surjective inverse limit $G= \varprojlim_{U}G_U$, where
$G_U=\Pi_1(\G_U,\Gamma)$ is the  fundamental group of a finite graph
of pro-$p$ groups
$(\G_U,\Gamma)$, where the connecting maps $\psi_{U,W}$ map each vertex group $\G_U(v)$
and each
edge group $\G_U(e)$ onto a conjugate of the vertex group $\G_W(v)$ and a conjugate of the edge group $\G_W(e)$ respectively.
Moreover,
there are only finitely many maximal by inclusion vertex stabilizers
in $G$ up to conjugation and also finitely many up to conjugation edge stabilizers $G_e$ whose images in
$\Pi_1(\G_U,\Gamma)$ are conjugates of edge groups and any other edge stabilizer is conjugate to a subgroup of one of these
$G_e$. Keeping the notation of the proof of  Proposition \ref{General} we denote  by
$G(e)$, $G(v)$  some
representatives of them. Note  that in this case, by Proposition 3.5 in \cite{Ribes2}, it follows that $\Gamma$ is a tree.

\medskip
\emph{Claim } We can choose the representatives $G(e)$ and
$G(v)$ such that for $e\in E(\Gamma)$ one has
 $G(e)=G(d_0(e))\cap G(d_1(e))$.

\smallskip
\emph{Proof of the claim}.

Let $D$ be a maximal subtree of $\Gamma$ such that this holds for all
$e\in E(D)$. We show that $D=\Gamma$. Suppose not. Then there exists
$e\in E(\Gamma)\setminus E(D)$ such that a vertex $v$ of $e$ is in $D$.
 Let $\G'_U(e)=\G_U(e)^{h_U}$ be the image
of $G(e)$ in $G_U$. Then clearly $\G_U(v)^{h_U}$ contains
$\G_U(e)^{h_U}$. Since $\G'_U(v)$ is a conjugate of $\G_U(v)$, it
follows that the set $X_U$ of elements $x_U\in G_U$ such
that $\G'_U(e)^{x_U}\leq \G'_U(v)$ is non-empty and clearly these sets form an
inverse system $\{ X_U \}$. It follows that the inverse limit $X$  of $\{ X_U \}$ is
non-empty and $G(e)^x\leq G(v)$ for any $x\in X$. So we replace
$G(e)$ by $G(e)^x$ (in this way $\G'_U(e)$ is replaced by $\G'_U(e)^{x'_U}$, where $x'_U$ is the image of $x$ in $G_U$).
Let $w$ be the other vertex of $e$. Similarly, there is an inverse system $\{Y_U \}$
of non-empty subsets of $G_U$ such that $\G'_U(e)\leq
{\G'_U(w)}^{y_U}$ for each $y_U \in Y_U$. Then the
inverse limit $Y$ of $\{Y_U \}$ is non-empty and for each $y\in Y$ we have $G(v)\cap G(w)^{y}=G(e)$
(since $\G'_U(v)\cap \G'_U(w)^{y_U}=\G_U(e)$ for every
$U$). Then $D\cup \{e\}\cup \{w\}$ satisfies the statement,
contradicting the maximality of $D$.

\medskip

Now consider the projection $\psi_U: G \to G_U$, and let
$\G^*_{U}(v)=\psi_U(G(v))$,
$\G^*_{U}(e)=\psi_U(G(e))$ for $G(v), G(e)$ being as in the Claim.  Let
\begin{displaymath}
G_U(\Gamma):=\coprod_{v\in V(\Gamma)}\G_{U}(v)
\end{displaymath}
and let $f_U: G_U(\Gamma) \to \Pi_1(\mathcal{G}_{U}, \Gamma)$
 be the  homomorphism defined by sending $\G_U(v)$ to their copies
 in $\Pi_1(\mathcal{G}_{U}, \Gamma)$. We choose an element $g_{U,v}\in G_U(\Gamma)$ such that
 $f_U(\G_U(v)^{g_{U,v}})=\G^*_U(v)$. Put $G_U(v)=\G_U(v)^{g_{U,v}}$. Since free products in the pro-$p$ case do not depend
on the conjugation of the factors
 (see Exercise 9.1.22 in \cite{Ribes1}), we have
 $G_U(\Gamma)=\coprod_{v\in V(\Gamma)}G_{U}(v)$.

 Now let $U$ and $W$ be open subgroups of $G$ such that
$U\leq W$. Then the maps $G_{U}(v) \to
G_{W}(v)$  induce
an epimorphism
 $\varphi_{U,W}:G_U(\Gamma)\to G_{W}(\Gamma)$,
which gives the following commutative diagram
\begin{displaymath}
\xymatrix {G_{U}(\Gamma) \ar[d]^{f_U} \ar[r]^{\varphi_{U,W}}\ar[d]^{f_U}&
G_{W}(\Gamma)\ar[d]^{f_W} \\
          \Pi_1(\mathcal{G}_{U}, \Gamma) \ar[r]^{\psi_{U,W}} & \Pi_1(\mathcal{G}_{W}, \Gamma)}
\end{displaymath}
 Let $G(\Gamma):=\coprod_{v\in
V(\Gamma)}G(v)$. Then the maps $G(v) \to {\mathcal{G}_{U}(v)}^{g_{U,v}}$  induce an epimorphism
 $\varphi_U:G(\Gamma) \to G_{U}(\Gamma)$
such that $\varphi_{U,W}\varphi_U=\varphi_W$. Thus we have a
surjective inverse system
$\{ G_{U}(\Gamma)\},$
 which
by Lemma 9.1.5 in \cite{Ribes1} has inverse limit
\begin{displaymath}
G(\Gamma)=\varprojlim_{U} G_{U}(\Gamma)) =\coprod_{v\in V(\Gamma)}G(v).
\end{displaymath}

 Note that $\mathcal{G}^{*}_{U}(e)$ and
$\mathcal{G}^{*}_{U}(v)$ are conjugates in $G_U$ of
$\mathcal{G}_{U}(e)$ and $\mathcal{G}_{U}(v)$ respectively and by the Claim the
relations of Lemma \ref{presentation} hold for
$\mathcal{G}^{*}_{U}(e)$ and $G_U(v)$.
It follows that $f_U$ is the epimorphism defined by just  imposing on $G_U(\Gamma)$
the amalgamation relations
 $f_{U,1,e}^{-1}(g)=f_{U,0,e}^{-1}(g)$ for $g\in \G^{*}_U(e)$, $e\in E(\Gamma)$, where
$f_{U,i,e}=(f_U)_{|G_U(d_i(e))}$, $i=0,1$;
this means that the kernel of $f_U$ is generated by the relators
$f_{U,1,e}^{-1}(g^{-1})f_{U,0,e}^{-1}(g)$ for
$g\in \G^{*}_U(e)$, $e\in E(\Gamma)$. Let $f:G(\Gamma)\longrightarrow G$ be the
 epimorphism given as the projective limit of the epimorphisms $ f_U $. Put $f_{i,e}=f_{|G(d_i(e))}$, $i=0,1$.
It follows that imposing on $G(\Gamma)$ the
relations
 $f_{1,e}^{-1}(g^{-1})f_{0,e}^{-1}(g)=1$, where $g\in G(e)$ defines exactly $f$. This gives the
desired structure  (i.e., presentation)  of the
 fundamental group of a graph of groups on $G=\Pi_1(\G,\Gamma)$, with vertex end edge groups $G(v)$ and $G(e)$ and with the
corresponding natural embeddings.

The rest of the proof follows directly from Proposition \ref{General}.
 \end{proof}

\section{The decomposition theorem for pro-$p$ groups from the class $\mathcal{L}$}

In this section we prove Theorem B and parts $(1)$, $(2)$,
$(3)$ and $(4)$ of Theorem C, stated in the introduction.

We say that the amalgamated free pro-$p$ product $A\amalg_{C}B$ is proper if $A$ and $B$ embed in $A\amalg_{C}B$.
 Ribes proved that an amalgamated free pro-$p$ product with procyclic amalgamation is proper
(see Theorem 3.2 in \cite{Ribes3}).

It is worth to recall the definition of the class $\mathcal{L}$ of pro-$p$ groups \cite{Kochloukova2}. Denote by
$\mathcal{G}_0$ the class of all free pro-$p$ groups of finite
rank. We define inductively the class $\mathcal{G}_n$ of pro-$p$
groups $G_n$ in the following way: $G_n$ is a free pro-$p$
amalgamated product $G_{n-1}\amalg_{C}A$, where $G_{n-1}$ is any
group from the class $\mathcal{G}_{n-1}$, $C$ is any
self-centralized procyclic pro-$p$ subgroup of $G_{n-1}$ and $A$
is any finite rank free abelian pro-$p$ group such that $C$ is a
direct summand of $A$. The class of pro-$p$ groups $\mathcal{L}$
consists of all finitely generated pro-$p$ subgroups $H$ of some
$G_n\in \mathcal{G}_n$, where $n\geq 0$. If $n$ is  minimal
with the property  that $H\leq G_n$ for some $G_n\in \G_n$, we say that $H$ has
weight $n$.
Then $H$ is a  subgroup of a free
amalgamated pro-$p$ product $G_n=G_{n-1}\amalg_{C}A$, where
$G_{n-1}\in \mathcal{G}_{n-1}$, $C\cong \mathbb{Z}_p$ and
$A=C\times B\cong \mathbb{Z}_p^m$. As was mentioned above, by Theorem 3.2 in
\cite{Ribes3}, this amalgamated pro-$p$ product is proper. Thus $H$
acts naturally on the pro-$p$ tree $T$ associated to $G_n$ (see
\cite{Ribes2}) and its edge stabilizers are procyclic.

\begin{lem}\label{2-acylindrical} Let $H$ and $T$ be as above. Then the action of $H$ on $T$ is 2-acylindrical.
 \end{lem}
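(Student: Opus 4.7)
The strategy is to argue by contradiction: assume some non-trivial $h\in H\leq G_n$ fixes three consecutive edges $e_1,e_2,e_3$ on a geodesic of $T$, and derive a contradiction from the properties of $C$ as a self-centralized procyclic subgroup of $G_{n-1}$.

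First, recall the structure of the standard pro-$p$ tree $T$ associated to $G_n=G_{n-1}\amalg_{C}A$: its vertex set is the disjoint union of $G_n/G_{n-1}$ and $G_n/A$ with stabilizers $gG_{n-1}g^{-1}$ and $gAg^{-1}$; its edge set is $G_n/C$, with $gC$ joining $gG_{n-1}$ and $gA$ and having stabilizer $gCg^{-1}$; and the stabilizer of any geodesic equals the intersection of the stabilizers of its edges (Corollary 3.8 in \cite{Ribes2}). Since every non-trivial edge stabilizer is a conjugate of $C$, after a suitable conjugation we may assume $\mathrm{Stab}(e_1)=C$, so that $h\in C\setminus\{1\}$.

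The tree $T$ is bipartite, so I consider the case where $e_1$ and $e_2$ share the $A$-type vertex $v_A$ (the other case is entirely symmetric). Then $e_2=aC$ for some $a\in A\setminus C$, and $\mathrm{Stab}(e_2)=aCa^{-1}=C$ because $A$ is abelian and $C\leq A$. To keep $e_1,e_2,e_3$ on a geodesic, $e_3$ must meet $e_2$ at its other endpoint $aG_{n-1}$, so $e_3=agC$ for some $g\in G_{n-1}\setminus C$, with stabilizer $agCg^{-1}a^{-1}$. The condition that $h$ fix $e_3$ becomes $h\in C\cap agCg^{-1}a^{-1}$, and since $a\in A$ commutes with every element of $C\leq A$, it reduces to $h\in C\cap gCg^{-1}$ for some $g\in G_{n-1}\setminus C$. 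Thus the failure of 2-acylindricity is precisely the failure of malnormality of $C$ in $G_{n-1}$.

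The proof is then completed by invoking the malnormality of $C$ in $G_{n-1}$: since $C$ is self-centralized procyclic in $G_{n-1}\in\mathcal{G}_{n-1}$, one uses the inductive structure of $\mathcal{G}_{n-1}$ together with pro-$p$ Bass-Serre theory (or the corresponding results established in \cite{Kochloukova2}) to conclude that $C\cap gCg^{-1}=\{1\}$ for all $g\in G_{n-1}\setminus C$, forcing $h=1$ and contradicting the choice of $h$. The main obstacle is precisely this upgrade from self-centralizedness to malnormality of $C$ in $G_{n-1}$; granted that, the remainder is a direct tree-theoretic computation that parallels the well-known argument showing that abstract extensions of centralizers are 2-acylindrical.
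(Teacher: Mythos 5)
Your tree-theoretic reduction is carried out correctly and is, in substance, the same route the paper takes: the paper likewise locates two distinct edges of $T^{G_e}$ meeting at a vertex whose stabilizer is (conjugate to) $G_{n-1}$ and derives a contradiction from a normalizer statement for the edge stabilizer inside $G_{n-1}$. Your computation showing that a violation of $2$-acylindricity forces $1\neq h\in C\cap gCg^{-1}$ for some $g\in G_{n-1}\setminus C$ is correct, including the observation that the two edges meeting at the $A$-type vertex have the same stabilizer because $A$ is abelian.

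The gap is the step you yourself flag as ``the main obstacle'': you assert, without proof, that self-centralizedness of $C$ in $G_{n-1}$ can be upgraded to malnormality ``using the inductive structure of $\mathcal{G}_{n-1}$ together with pro-$p$ Bass--Serre theory''. As written this is not an argument, and the implication is false for general pro-$p$ groups: a self-centralized procyclic subgroup that happens to be normal in a non-abelian group (for instance the base $\mathbb{Z}_p$ of a non-abelian semidirect product $\mathbb{Z}_p\rtimes\mathbb{Z}_p$ with faithful action) is as far from malnormal as possible. What actually closes the argument is Theorem 5.1 of \cite{Kochloukova2}, which is precisely what the paper invokes: for $G_{n-1}\in\mathcal{G}_{n-1}$ the centralizer of any non-trivial element is abelian, and $N_{G_{n-1}}(C)=C_{G_{n-1}}(C)=C$. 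Granting that, malnormality follows in two lines: if $1\neq h\in C\cap gCg^{-1}$, then $C$ and $gCg^{-1}$ both lie in the abelian group $C_{G_{n-1}}(h)$, so $gCg^{-1}\leq C_{G_{n-1}}(C)=C$ and symmetrically $C\leq gCg^{-1}$; hence $gCg^{-1}=C$ and $g\in N_{G_{n-1}}(C)=C$, a contradiction. Replace the appeal to ``the inductive structure'' by this citation and deduction; with that, your proof is complete and essentially coincides with the paper's.
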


\begin{proof} It suffices to prove that the action of $G_n$ on $T$ is 2-acylindrical.
Let $G_e$ be a non-trivial edge stabilizer. If the diameter of $T^{G_e}$ is bigger than 2, then it contains a non-pending
vertex $v$ whose
stabilizer is conjugate to $G_{n-1}$ and so we may assume without loss of generality that it is $G_{n-1}$. Let $e'$ be another edge incident
to $v$ stabilized by $G_e$. Then $ge=e'$ for some $g\in G_{n-1}$ and so $g\in N_{G_{n-1}}(G_e)$ (we use here that $G_e$ is
procyclic). By Theorem 5.1 in \cite{Kochloukova2} it follows that $N_{G_{n-1}}(G_e)=C_{G_{n-1}}(G_e)=G_e$. Thus $e=e'$, a contradiction.
 \end{proof}

\begin{thm}\label{StructureTheorem}
Let $G$ be a pro-$p$ group from the class $\mathcal{L}$. If $G$ has weight $n\geq 1$, then it is the fundamental pro-$p$ group of a finite graph of
pro-$p$ groups that has infinite procyclic or trivial edge groups and finitely generated vertex groups. Moreover, if $G$ is non-abelian, then it has at least one vertex group that is a non-abelian pro-$p$ group and all the non-abelian vertex groups of $G$ are pro-$p$ groups from the class $\mathcal{L}$ of weight $\leq n-1$.
\end{thm}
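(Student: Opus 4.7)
First, I would realize $G$ concretely as a subgroup of $G_n=G_{n-1}\amalg_C A$ provided by the definition of weight, with $G_{n-1}\in\mathcal{G}_{n-1}$, $C\cong\mathbb{Z}_p$ self-centralized in $G_{n-1}$, and $A=C\times B\cong\mathbb{Z}_p^m$, and let it act on the standard pro-$p$ tree $T$ associated with this amalgamation. By Theorem 3.2 of \cite{Ribes3} the amalgamated product is proper, so the action of $G_n$ on $T$ has vertex stabilizers conjugate to $G_{n-1}$ or to $A$ and edge stabilizers conjugate to $C$; restricting to $G$ and invoking Lemma~\ref{2-acylindrical}, this induced action is $2$-acylindrical with procyclic edge stabilizers.

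Next, since $G\in\mathcal{L}$ is finitely generated, the acylindrical case of Theorem B yields a decomposition $G=\Pi_1(\mathcal{G},\Gamma)$ over a finite graph $\Gamma$, with finitely generated vertex groups and procyclic edge groups, each realized as the stabilizer in $G$ of a vertex (resp.\ edge) of $T$. Consequently, edge groups are closed subgroups of conjugates of $C\cong\mathbb{Z}_p$, and so are either trivial or infinite procyclic. Vertex groups come in two flavours: those of the form $G\cap A^g$ are abelian, being subgroups of the free abelian pro-$p$ group $A$, while those of the form $G\cap G_{n-1}^g$ are finitely generated subgroups of conjugates of $G_{n-1}\in\mathcal{G}_{n-1}$, hence elements of $\mathcal{L}$ of weight at most $n-1$ by the very definition of weight. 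In particular every non-abelian vertex group is of the second type and therefore lies in $\mathcal{L}$ with strictly smaller weight, settling the structural half of the statement.

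The remaining claim — that if $G$ itself is non-abelian then at least one vertex group of the decomposition is non-abelian — is the most delicate point. The minimality of $n$ rules out $G$ being contained in any conjugate of $G_{n-1}$, and the non-abelianness of $G$ rules out any conjugate of $A$, so the decomposition is necessarily non-trivial. If every vertex group were abelian, then $G$ would be expressed as the fundamental pro-$p$ group of a finite graph of finitely generated abelian pro-$p$ groups over procyclic edges, and my plan would be to exploit the self-centralizing property of $C$ in $G_{n-1}$ together with the rigidity of the class $\mathcal{L}$ from \cite{Kochloukova2} (notably the centralizer result used already in the proof of Lemma~\ref{2-acylindrical}) to conclude that such a $G$ must be either abelian itself or contained in some $G_{n-1}'\in\mathcal{G}_{n-1}$ of strictly smaller weight, yielding the required contradiction. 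Establishing this last reduction — ruling out a non-abelian pro-$p$ group in $\mathcal{L}$ of weight exactly $n$ being built purely from abelian vertex pieces over procyclic edges — is where I expect the real obstacle to lie.
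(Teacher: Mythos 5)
Your first two paragraphs reproduce the paper's argument faithfully: the action of $G$ on the standard pro-$p$ tree of $G_n=G_{n-1}\amalg_C A$ is $2$-acylindrical by Lemma \ref{2-acylindrical}, Theorem \ref{StructureTheorem-General} yields $G=\Pi_1(\mathcal{G},\Gamma)$ with finitely generated vertex groups and procyclic edge groups realized as stabilizers of vertices and edges of $T$, and since every vertex stabilizer is conjugate into $G_{n-1}$ or into $A$, every non-abelian vertex group lands in a conjugate of $G_{n-1}$ and therefore lies in $\mathcal{L}$ with weight $\leq n-1$. The observation that edge groups, being closed subgroups of conjugates of $C\cong\mathbb{Z}_p$, are trivial or infinite procyclic is also correct.

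The genuine gap is the final claim, which you explicitly leave as a plan: that a non-abelian $G$ has at least one non-abelian vertex group. Your remark that the decomposition must be non-trivial does not suffice, since a non-trivial graph of abelian groups over procyclic edges can perfectly well have non-abelian fundamental pro-$p$ group; what rules this out here is the centralizer condition, and the paper makes this precise as follows. First collapse the fictitious edges of a maximal subtree $T_\Gamma$ (those with $\mathcal{G}(e)$ equal to an incident vertex group), so that every vertex group properly contains the edge groups of its incident edges. If all vertex groups were abelian, then for any $e\in T_\Gamma$ the two abelian vertex groups at its endpoints would both centralize $\mathcal{G}(e)$ and would generate a non-abelian group (a proper amalgam of two abelian groups over a proper procyclic subgroup), so $\mathcal{G}(e)$ would have non-abelian centralizer, contradicting Theorem 5.1 in \cite{Kochloukova2}. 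Hence $T_\Gamma$ is a single vertex with group $H$ and $G=\mathrm{HNN}(H,A,t)$ with $A$ procyclic. If $H$ is not procyclic, then $\langle H,H^t\rangle=H\amalg_A H^t$ (Proposition 4.4 in \cite{ZM:90}) is again a non-abelian group centralizing $A$, a contradiction; if $H$ is procyclic, then $A=A^t$ is normalized by $t$, hence central in $G$, and Theorem 5.1 in \cite{Kochloukova2} forces $G$ to be abelian, again a contradiction. Your alternative route --- arguing that such a $G$ would be abelian or would embed in a group of smaller weight --- is not carried out and is not what the paper does; without the explicit case analysis above, the proof is incomplete at exactly the point you flag as the obstacle.
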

\begin{proof} By Lemma \ref{2-acylindrical}, the action of $G$ on the standard pro-$p$ tree $T$ associated with $G_n$ is
2-acylindrical; so by Theorem \ref{StructureTheorem-General} it follows that  $G=\Pi_1(\G,\Gamma)$ is the fundamental pro-$p$ group of a finite graph of pro-$p$ groups with procyclic edge groups and finitely generated vertex groups. Moreover, each vertex group of $G$ is a vertex stabilizer of $G$ in $T$; thus it is a pro-$p$ group from the class $\mathcal{L}$ contained in a subgroup of $G_n=G_{n-1}\amalg_{C}A$ conjugate to $G_{n-1}$ or $A$. If it is non-abelian, then it must be contained in a subgroup of $G_n$ conjugate to $G_{n-1}$ and so it has weight $\leq n-1$. Thus, in order to finish the proof it remains to show that at least one of the vertex groups of $G$ is non-abelian.

Let $T_\Gamma$ be a maximal subtree of $\Gamma$. By
collapsing the fictitious edges of $T_\Gamma$ (i.e., edges whose edge group
is equal to the vertex group of a vertex of this edge) we may assume
that all vertex groups contain properly edge groups for
incident edges. Then if all vertex groups are abelian we can have
at most one vertex in $\Gamma$ because otherwise the centralizer of the edge group $\G(e)$ is not abelian for any edge $e\in T_\Gamma$, contradicting Theorem 5.1 in
\cite{Kochloukova2}. Thus we may assume that $T_\Gamma$ has only one vertex.  Let $H$ be the vertex group of this unique
vertex and $A$ the edge group (which is procyclic). Then $G=\textrm{HNN}(H, A, t)$.  If $H$ is not procyclic, then
since $\langle H, H^t \rangle=H\amalg_{A} H^t$ (cf. Proposition 4.4 in \cite{ZM:90}), we get once more a contradiction by
Theorem 5.1 in
\cite{Kochloukova2}. Now suppose that $H$ is procyclic. Then we must have $A=A^t$. Hence $A$ is normalized by $t$ and
therefore it is central in $G$. By Theorem 5.1
 in \cite{Kochloukova2} it follows that $G$ is abelian, which is a contradiction. Thus at least one of the vertex groups of
 $G$ is non-abelian.
\end{proof}

Now let $G$ be as in the above theorem. Using the theorem and induction we can deduce that there are only finitely many conjugacy classes of non-procyclic maximal abelian subgroups of $G$.  Indeed, let us suppose that this result holds for all pro-$p$ groups from the class $\mathcal{L}$ of weight $\leq n-1$ (and $G$ has weight $n$). Let $H$ be a non-procyclic maximal abelian subgroup of $G$ and consider the action of $G$ on $T$ according to the first paragraph in the above proof. Then $H$ is a subgroup of $G_{n-1}\amalg_{C}A$ and so, by Corollary 5.5 in \cite{Kochloukova2}, the group $H$ is conjugate in $G_n$ to a subgroup of $G_{n-1}$ or to a subgroup of $A$. Thus it stabilizes a vertex of $T$, and therefore, by Theorem \ref{StructureTheorem-General}, it is contained in a conjugate of a vertex group $\G(v)$ of $(\G,\Gamma)$. The non-abelian vertex groups of $G$ have weight $\leq n-1$ and therefore, by the induction hypothesis, they have only finitely many conjugacy classes of non-procyclic maximal abelian subgroups. Since $G$ has only finitely many vertex groups, the result follows. Let us record this result in the following.

\begin{cor}
Let $G$ be a pro-$p$ group from the class $\mathcal{L}$. Then there are only finitely many conjugacy classes of non-procyclic maximal abelian subgroups of $G$.
\end{cor}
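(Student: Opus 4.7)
The plan is to argue by induction on the weight $n$ of $G$. The base case $n=0$ is immediate: a finitely generated subgroup of a free pro-$p$ group of finite rank is itself free pro-$p$, so every abelian subgroup is procyclic and the statement is vacuously true. I then assume the conclusion for all pro-$p$ groups in $\mathcal{L}$ of weight at most $n-1$ and treat a group $G \in \mathcal{L}$ of weight $n \geq 1$.

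For the inductive step I would invoke Theorem A to decompose $G = \Pi_1(\mathcal{G}, \Gamma)$ as the fundamental pro-$p$ group of a finite graph of pro-$p$ groups, arising from the action of $G$ on the standard pro-$p$ tree $T$ associated with the amalgam $G_n = G_{n-1} \amalg_C A$ containing $G$. Given any non-procyclic maximal abelian subgroup $H$ of $G$, I would first observe that $H$ is an abelian subgroup of $G_n$, so Corollary 5.5 in \cite{Kochloukova2} forces $H$ to be conjugate in $G_n$ either into $G_{n-1}$ or into $A$; either alternative implies that $H$ fixes a vertex of $T$. By the last clause of Theorem \ref{StructureTheorem-General}, vertex stabilizers in $G$ are conjugate \emph{inside $G$} to subgroups of the vertex groups $\mathcal{G}(v)$, and maximality of $H$ among abelian subgroups of $G$ descends to maximality in the vertex group containing the relevant conjugate of $H$.

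To finish I would count contributions vertex by vertex. Each abelian vertex group of $(\mathcal{G}, \Gamma)$ is itself the only maximal abelian subgroup it contains, hence contributes at most one conjugacy class; each non-abelian vertex group has weight at most $n-1$ by Theorem A and thus, by the inductive hypothesis, contributes only finitely many conjugacy classes of non-procyclic maximal abelian subgroups. Because $\Gamma$ is finite, the total is finite; a short check confirms that two maximal abelian subgroups conjugate in $G$ after being pushed into vertex groups give the same $G$-conjugacy class in $G$, so no over-counting occurs.

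The main obstacle, and the step that deserves the most care, is the passage from ``$H$ is conjugate in $G_n$ into $G_{n-1}$ or $A$'' to ``a $G$-conjugate of $H$ lies in some vertex group of the decomposition of $G$'': a priori the conjugator produced by Corollary 5.5 in \cite{Kochloukova2} lies only in $G_n$, not in $G$. This is precisely why I would route the argument through the fixed vertex in $T$ and then apply the vertex-stabilizer statement of Theorem \ref{StructureTheorem-General}, which guarantees that the required conjugation can be realized inside $G$ itself.
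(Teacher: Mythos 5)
Your proposal is correct and follows essentially the same route as the paper: induction on the weight, using Corollary 5.5 of \cite{Kochloukova2} to conjugate a non-procyclic maximal abelian subgroup into $G_{n-1}$ or $A$ inside $G_n$, passing through the fixed vertex of $T$ to obtain a conjugation realized in $G$ itself via the last clause of Theorem \ref{StructureTheorem-General}, and then counting contributions over the finitely many vertex groups with the inductive hypothesis applied to the non-abelian ones. The extra care you take with the ``conjugator lives in $G_n$, not $G$'' issue and with descending maximality to the vertex groups is exactly the (implicit) content of the paper's argument.
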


Recall that if $cd(G)<\infty$ and if $\textrm{dim} _{\mathbb{F}_p} H^k(G,\mathbb{F}_p)< \infty$ for all $k\geq 0$, then the \emph{Euler-Poincar\'e characteristic} of $G$ is defined by
\begin{displaymath}
\chi(G):=\sum_{k=0}^{\infty} (-1)^k \textrm{dim} _{\mathbb{F}_p} H^k(G,\mathbb{F}_p).
\end{displaymath}
Moreover, if $G$ is the fundamental pro-$p$ group of a finite graph of pro-$p$ groups $(\G, \Gamma)$ such that the Euler-Poincar\'e characteristic is well defined for the vertex and edge groups, then the action of $G$ on the standard tree $S(G)$ implies the formula
\begin{displaymath}
\chi(G)=(\sum_{v\in V(\Gamma)}\chi(\G(v)))-(\sum_{e\in E(\Gamma)}\chi(\G(e))).
\end{displaymath}

The first part of the following theorem coincides with Theorem 8.1 in \cite{Kochloukova2}, while the second part generalizes Theorem 8.2 and gives an affirmative answer to the question 9.3 of the same paper.
\begin{thm}\label{Euler}
Let $G$ be a pro-$p$ group from the class $\mathcal{L}$. Then $G$ has a non-positive Euler-Poincar\'e characteristic. Moreover $\chi(G)=0$ if and only if $G$ is abelian.
\end{thm}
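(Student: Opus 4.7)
The plan is to argue by induction on the weight $n$, strengthening the second assertion to the statement that a non-abelian $G\in\mathcal{L}$ satisfies $\chi(G)\leq -1$. For the base case $n=0$, the group $G$ is a free pro-$p$ group of finite rank $d(G)\geq 1$, so $\chi(G)=1-d(G)$; this vanishes exactly when $d(G)=1$, i.e., when $G\cong\Z_p$ is abelian, and is $\leq -1$ otherwise.

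For the inductive step, assume the sharpened claim for all weights less than $n$ and let $G\in\mathcal{L}$ have weight $n\geq 1$. If $G$ is abelian, it is a non-trivial finitely generated torsion-free abelian pro-$p$ group, hence isomorphic to $\Z_p^k$ for some $k\geq 1$, and $\chi(G)=0$. Suppose therefore that $G$ is non-abelian and apply Theorem \ref{StructureThm} to obtain a decomposition $G=\Pi_1(\G,\Gamma)$ with procyclic or trivial edge groups, finitely generated vertex groups, at least one non-abelian vertex group, and every non-abelian vertex group lying in $\mathcal{L}$ of weight $\leq n-1$. After the collapse of fictitious edges used in the proof of Theorem A, we may further assume that every edge group is properly contained in each of its incident vertex groups; in particular no vertex group is trivial (an isolated trivial vertex would contribute only a trivial free factor and can be discarded).

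Applying the additivity formula for a graph of pro-$p$ groups,
\begin{equation*}
\chi(G)=\sum_{v\in V(\Gamma)}\chi(\G(v))-\sum_{e\in E(\Gamma)}\chi(\G(e)),
\end{equation*}
we note that each edge group contributes $\chi(\G(e))\in\{0,1\}$ (according as $\G(e)\cong\Z_p$ or $\G(e)=1$); each non-trivial abelian vertex group is some $\Z_p^k$ and contributes $0$; and by the inductive hypothesis each non-abelian vertex group contributes at most $-1$. Writing $V_N$ for the set of non-abelian vertices and $E_0$ for the set of edges with trivial stabilizer, these observations yield
\begin{equation*}
\chi(G)\leq -|V_N|-|E_0|\leq -1,
\end{equation*}
since Theorem A supplies at least one non-abelian vertex group, so $|V_N|\geq 1$. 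This closes the induction and in particular shows that $\chi(G)=0$ forces $G$ to be abelian.

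The only point where care is needed is the reduction to a decomposition in which every vertex group is non-trivial; this is the collapse of fictitious edges already carried out in the proof of Theorem A and it can be imported verbatim. Everything else reduces to additivity of $\chi$ together with direct evaluation of $\chi$ on $\Z_p^k$, on procyclic groups, and on the trivial group.
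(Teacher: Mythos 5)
Your proposal is correct and follows essentially the same route as the paper: induction on the weight, with the base case handled by $\chi(F)=1-d(F)$ for free pro-$p$ groups and the inductive step by combining Theorem A with the additivity formula for $\chi$ over the graph of groups. You are in fact slightly more careful than the paper's own proof, which writes every edge contribution as $0$ even though a trivial edge group has $\chi=1$ (this only strengthens the inequality, so the argument is unaffected), and you explicitly dispose of trivial vertex groups, a point the paper leaves implicit.
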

\begin{proof}
Clearly $\chi(G)=0$ if $G$ is abelian. Thus it suffices to show
that $\chi(G)<0$ whenever $G$ is non-abelian. We will prove this
using induction on the weight $n$ of the group $G$. Suppose that
$G$ is non-abelian. If $n=0$, then $G$ is a non-abelian free
pro-$p$ group and so we have $\chi(G)=1-d(G)<0$. Now let $n\geq 1$
and suppose that every non-abelian pro-$p$ group from the class
$\mathcal{L}$ which has weight less than $n$ has a negative Euler-Poincar\'e
characteristic. By Theorem \ref{StructureTheorem}, the group $G$
is the fundamental pro-$p$ group of a finite graph of pro-$p$ groups $(\G, \Gamma)$ with
infinite procyclic or trivial edge groups and whose vertex groups are either finitely generated free abelian pro-$p$ groups or non-abelian
pro-$p$ groups from the class $\mathcal{L}$ of weight $\leq n-1$. Moreover, there is at least one non-abelian vertex group, say $\G(v)$. Thus, by the induction hypothesis, we have $\chi(\G(v))<0$. Now by the Euler-Poincar\'e characteristic formula we have
\begin{displaymath}
\chi(G)=(\sum_{x\in V(\Gamma)}\chi(\G(x)))-(\sum_{e\in E(\Gamma)}\chi(\G(e)))=(\sum_{x\in V(\Gamma)}\chi(\G(x)))-(\sum_{e\in E(\Gamma)} 0) \leq \chi(\G(v))<0.
\end{displaymath}
\end{proof}

Let $r(G)$ denote the minimal number of relations of $G$, i.e,
\begin{displaymath}
r(G):=\rm{inf} \{ |R| ~ | ~ G \textrm{ has a presentation } \langle X ~ | ~ R \rangle \textrm{ with } |X|=d(G) \}.
\end{displaymath}
It is a well known fact that $d(G)=\textrm{dim}_{\mathbb{F}_p}H^1(G, \mathbb{F}_p)$, and if $G$ is finitely generated, then $r(G)=\textrm{dim}_{\mathbb{F}_p}H^2(G, \mathbb{F}_p)$ (see \cite{Serre1}). Recall that if $G$ is a finitely presented pro-$p$ group, then the \emph{deficiency} of $G$ is defined by
\begin{displaymath}
\textrm{def}(G):=d(G)-r(G)=\textrm{dim}_{\mathbb{F}_p}H^1(G, \mathbb{F}_p)-\textrm{dim}_{\mathbb{F}_p}H^2(G, \mathbb{F}_p).
\end{displaymath}

\begin{lem}\label{def}
Let $G$ be a finitely generated pro-$p$ group with $d(G)\geq 2$.
\begin{itemize}
\item [(a)] If $G=A\amalg_{C}B$ where $C$ is procyclic, then $\textrm{def}(G)\geq \textrm{def}(A)+\textrm{def}(B)-2$.
\item[(b)] If $G=\textrm{HNN}(H, A,t)$ where $A$ is procyclic, then $\textrm{def}(G) \geq \textrm{def}(H)$.
\end{itemize}
\end{lem}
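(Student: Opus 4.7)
The plan is to deduce both inequalities from the Mayer--Vietoris long exact sequence in continuous cohomology with trivial $\mathbb{F}_p$ coefficients, applied to the given decomposition of $G$. Because $C$ in (a) and $A$ in (b) are procyclic, the amalgamated product and HNN extension in question are both proper (Theorem~3.2 of~\cite{Ribes3} and its HNN analogue), so the Mayer--Vietoris sequence is available and, equivalently, arises from the cohomology of the action of $G$ on its standard pro-$p$ tree (cf.~\cite{Ribes2}). The key auxiliary fact is that a procyclic pro-$p$ group $P$ has cohomological dimension at most one, whence $H^n(P,\mathbb{F}_p)=0$ for $n\ge 2$ and $\dim_{\mathbb{F}_p} H^1(P,\mathbb{F}_p)\le 1$.

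For part (a), the Mayer--Vietoris sequence for $G=A\amalg_C B$, after handling the $H^0$ portion in the usual way, truncates to
\begin{equation*}
0\to H^1(G)\to H^1(A)\oplus H^1(B)\to H^1(C)\to H^2(G)\to H^2(A)\oplus H^2(B)\to 0.
\end{equation*}
Taking alternating $\mathbb{F}_p$-dimensions and using $d(\cdot)=\dim H^1(\cdot,\mathbb{F}_p)$ and $r(\cdot)=\dim H^2(\cdot,\mathbb{F}_p)$ yields
\begin{equation*}
\textrm{def}(G)=\textrm{def}(A)+\textrm{def}(B)-\dim_{\mathbb{F}_p}H^1(C,\mathbb{F}_p)\geq \textrm{def}(A)+\textrm{def}(B)-1,
\end{equation*}
which is in fact sharper than the claimed bound.

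For part (b), I use the analogous HNN sequence $\cdots \to H^{n-1}(A)\to H^n(G)\to H^n(H)\to H^n(A)\to\cdots$. The map $H^0(H)\to H^0(A)$ is the difference of the two embeddings acting on constant coefficients and hence vanishes, so an extra copy of $\mathbb{F}_p$ (corresponding to the stable letter $t$) injects into $H^1(G)$. Truncating at $H^2(A)=0$ gives
\begin{equation*}
0\to \mathbb{F}_p\to H^1(G)\to H^1(H)\to H^1(A)\to H^2(G)\to H^2(H)\to 0,
\end{equation*}
and alternating dimensions produce $\textrm{def}(G)=\textrm{def}(H)+1-\dim_{\mathbb{F}_p}H^1(A,\mathbb{F}_p)\geq \textrm{def}(H)$. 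The most delicate points will be ensuring the Mayer--Vietoris sequences are set up correctly in the pro-$p$ category (standard once properness is in hand, via the action on the standard pro-$p$ tree) and confirming that $G$ is itself finitely presented so that $\textrm{def}(G)$ makes sense; the latter is immediate from finite presentability of the vertex groups together with finite generation of the procyclic edge group. The hypothesis $d(G)\geq 2$ is not needed for this cohomological derivation, but serves to rule out the degenerate case where the statement becomes vacuous or negative.
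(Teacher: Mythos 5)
Your route is genuinely different from the paper's. For (a) the paper combines the Frattini-quotient bound of Lemma 2.1(a) (namely $d(G)\ge d(A)+d(B)-1$ when $C$ is procyclic and $d(G)\ge 2$) with the trivial estimate $r(A\amalg_C B)\le r(A)+r(B)+1$; for (b) it writes an explicit presentation of $G$ on $d(H)+1$ generators and $r(H)+1$ relations and invokes Lemma 1.1 of \cite{Lubotzky} to convert it into a presentation on $d(G)$ generators of the same deficiency. Your Mayer--Vietoris computation for (a) is correct: properness is guaranteed by Theorem 3.2 of \cite{Ribes3}, the sequence you write is the standard one coming from the action on the standard pro-$p$ tree, and the alternating-sum count even yields the sharper bound $\textrm{def}(G)=\textrm{def}(A)+\textrm{def}(B)-\dim_{\mathbb{F}_p}H^1(C,\mathbb{F}_p)\ge \textrm{def}(A)+\textrm{def}(B)-1$ (when $A$ and $B$ are finitely presented; otherwise the stated inequality is vacuous). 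So (a) is fine and buys more than the paper's elementary count, at the price of invoking properness and cohomology.

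The gap is in (b): there is no ``HNN analogue'' of Ribes' properness theorem, and pro-$p$ HNN extensions with procyclic associated subgroup need not be proper. Take $H=\langle a\rangle\cong\mathbb{Z}_p$, $A=H$ and $f(a)=a^p$: in every finite $p$-quotient $a$ is conjugate to $a^p$, hence trivial, so $H$ does not embed in $\textrm{HNN}(H,A,t)$ (padding $H$ by a direct factor $\mathbb{Z}_p$ gives an example that also satisfies $d(G)\ge 2$). When $H$ does not embed, the vertex stabilizers of the standard pro-$p$ tree are proper quotients of $H$, so the Mayer--Vietoris sequence with terms $H^n(H,\mathbb{F}_p)$ and $H^n(A,\mathbb{F}_p)$ is simply not available, and your alternating-sum derivation of $\textrm{def}(G)\ge\textrm{def}(H)$ collapses. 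This is precisely what the paper's argument is designed to avoid: counting generators and relations of an explicit presentation and applying Lubotzky's lemma requires no embedding of $H$ into $G$. To repair your proof of (b) you must either add a properness hypothesis (which the lemma does not have, and which fails in the generality in which the lemma is stated) or fall back on the presentation-theoretic count.
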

\begin{proof}
Part (a) follows from Lemma \ref{decreasing} (a) and the obvious fact that $r(A\amalg_{C}B)\leq r(A)+r(B)+1$. For part (b) first suppose that $H=\langle X ~ | ~ R \rangle$, where $|X|=d(G)$ and $|R|=r(G)$. From the definition of HNN-extensions we have
\begin{displaymath}
G=\textrm{HNN}(H, A,t)= \langle H, t ~ | ~ tat^{-1}=f(a), \langle a \rangle =A \rangle=\langle X, t ~ | ~ R, tat^{-1}=f(a) \rangle,
\end{displaymath}
where $f: A \to G$ is a monomorphism. By Lemma 1.1 in \cite{Lubotzky}, there exists a presentation $\langle Y ~ | ~ S \rangle$ of $G$ such that $|Y|=d(G)$ and $|S|=|R|+1-(|X|+1-|Y|)$. Hence
\begin{displaymath}
\textrm{def}(G)=d(G)-r(G)\geq |Y|-|S|=|X|-|R|=\textrm{def}(H).
\end{displaymath}
\end{proof}
Now we are ready to answer positively question 9.1 in \cite{Kochloukova2}.

\begin{thm}\label{deficiency}
Let $G$ be a pro-$p$ group from the class $\mathcal{L}$. If every abelian pro-$p$ subgroup of $G$ is procyclic and $G$ itself is not procyclic, then $\textrm{def}(G)\geq 2$.
\end{thm}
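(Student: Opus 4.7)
The plan is to proceed by induction on the weight $n$ of $G$. For the base case $n=0$, the group $G$ is a finitely generated free pro-$p$ group and, since $G$ is not procyclic, $d(G)\geq 2$ and $r(G)=0$, giving $\textrm{def}(G)=d(G)\geq 2$. For the inductive step $n\geq 1$, I would apply Theorem A to write $G=\Pi_1(\mathcal{G},\Gamma)$ as the fundamental pro-$p$ group of a finite graph of pro-$p$ groups with procyclic or trivial edge groups, finitely generated vertex groups, and at least one non-abelian vertex group. Every vertex group is a subgroup of the torsion-free group $G$, so it inherits the hypothesis that its abelian pro-$p$ subgroups are procyclic; hence each vertex group is either isomorphic to $\mathbb{Z}_p$ or is a non-abelian member of $\mathcal{L}$ of weight at most $n-1$, to which the inductive hypothesis applies and yields $\textrm{def}(\mathcal{G}(v))\geq 2$.

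A direct iteration of Lemma \ref{def} along a spanning tree of $\Gamma$ only yields a bound of the form $\textrm{def}(G)\geq 2-k_a$, where $k_a$ is the number of procyclic vertex groups, which is insufficient as soon as $k_a\geq 1$. To get around this I would pass to the Euler-Poincar\'e characteristic, strengthening the induction to also carry the assertion $\textrm{cd}(G)\leq 2$. In the inductive step this follows from the standard long exact sequence in pro-$p$ cohomology attached to the graph of pro-$p$ groups $(\mathcal{G},\Gamma)$: vertex groups have $\textrm{cd}\leq 2$ (procyclic ones trivially, non-abelian ones by the strengthened inductive hypothesis) and edge groups have $\textrm{cd}\leq 1$, so $H^k(G,\mathbb{F}_p)=0$ for all $k\geq 3$.

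With $\textrm{cd}(G)\leq 2$ in hand, $\chi(G)=1-d(G)+r(G)=1-\textrm{def}(G)$ and in particular $\chi(G)\in\mathbb{Z}$. Applying the formula $\chi(G)=\sum_v \chi(\mathcal{G}(v))-\sum_e \chi(\mathcal{G}(e))$ recorded just before Theorem \ref{Euler}, the contributions tally as follows: procyclic vertex and edge groups contribute $0$; trivial edge groups contribute $-1$ (since $\chi$ of a trivial group equals $1$ and edge terms are subtracted); and each non-abelian vertex group $\mathcal{G}(v)$ contributes $\chi(\mathcal{G}(v))=1-\textrm{def}(\mathcal{G}(v))\leq -1$ by the strengthened induction. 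Since Theorem A guarantees at least one non-abelian vertex group, $\chi(G)\leq -1$, and therefore $\textrm{def}(G)=1-\chi(G)\geq 2$. The main obstacle is maintaining $\textrm{cd}(G)\leq 2$ through the induction in parallel with the deficiency bound; once this is set up, the argument is essentially a sharpening of Theorem \ref{Euler} that upgrades $\chi(G)<0$ to $\chi(G)\leq -1$ by using the integrality of $\chi$.
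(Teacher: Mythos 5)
Your argument is correct in substance but takes a genuinely different route from the paper. The paper confronts the procyclic vertex groups head-on: it shows that if some $\G(u_j)\cong\mathbb{Z}_p$ occurs in the tree part of the decomposition, then the incident edge group must equal $\G(u_j)$ (otherwise one contradicts the centralizer structure of groups in $\mathcal{L}$, Theorem 5.1 of \cite{Kochloukova2}), so the corresponding amalgam collapses; after this reduction every vertex group satisfies the hypotheses of the theorem, the inductive bound $\textrm{def}(\G(u_i))\geq 2$ applies to all of them, and iterating Lemma \ref{def} along the spanning tree and then over the HNN extensions gives $\textrm{def}(G)\geq 2$ directly. You instead leave the procyclic vertices in place and route through the Euler--Poincar\'e characteristic, carrying $\textrm{cd}(G)\leq 2$ through the induction so that $\textrm{def}(G)=1-\chi(G)$, and then upgrading $\chi(G)<0$ to $\chi(G)\leq -1$ by integrality. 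This is a legitimate sharpening of the proof of Theorem \ref{Euler}; it costs you the Mayer--Vietoris sequence in degrees $\geq 2$ (the paper only uses its Euler-characteristic shadow) but buys the extra conclusion $\textrm{cd}(G)\leq 2$ for these groups, which is of independent interest. Two small points to tighten: first, your tally assigns contribution $0$ to ``procyclic'' vertex groups, but a trivial vertex group has $\chi=1$ and would push the sum the wrong way; you should first pass to a reduced graph of groups (collapsing edges with $\G(e)=\G(v)$, exactly as the paper does at the start of the proof of Theorem \ref{StructureTheorem}), after which no trivial vertex groups with fewer than two incident trivial edges can occur and the estimate goes through. Second, the identity $\textrm{def}(G)=1-\chi(G)$ also needs $\dim H^2(G,\mathbb{F}_p)<\infty$, i.e.\ finite presentability of $G$; this holds for groups in $\mathcal{L}$ and is used elsewhere in the paper, but it should be said.
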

\begin{proof}
Suppose that every abelian pro-$p$ subgroup of $G$ is procyclic and $G$ itself is not procyclic. Again, as in the proof of Theorem \ref{Euler}, we will use
induction on the weight $n$ of the group $G$. If $n=0$, then it is
clear that $\textrm{def}(G)\geq 2$. Let $n\geq 1$ and suppose that
any non-procyclic pro-$p$ group from the class $\mathcal{L}$ which
has weight $\leq n-1$ and in which every abelian pro-$p$ subgroup
is procyclic has deficiency $\geq 2$. By Theorem
\ref{StructureTheorem}, the group $G$ is the fundamental pro-$p$
group $\Pi_1(\G, \Gamma)$ of a finite graph of pro-$p$ groups with
infinite procyclic or trivial edge groups and finitely generated vertex groups. Moreover, each non-abelian vertex group is a
pro-$p$ group from the class $\mathcal{L}$ of weight $\leq n-1$.
Let $T_\Gamma$ be the maximal subtree of $\Gamma$, $k:=|\Gamma|$ and
$l:=|T_\Gamma|$. We can obtain $G$ by successively forming amalgamated
free products and HNN-extensions. Indeed
\begin{displaymath}
G=A_k \textrm{ where } A_l:=\G(u_1)\amalg_{\G(e_1)}\G(u_2)\amalg_{\G(e_2)}\cdots \G(u_l)\amalg_{\G(e_l)}\G(u_{l+1}),
\end{displaymath}
\begin{displaymath}
A_{l+1}:=\textrm{HNN}(A_l, \G(e_{l+1}), t_{l+1}) \textrm{ and } A_j:=\textrm{HNN}(A_{j-1}, \G(e_j), t_j) \textrm{ for } j=l+2,...,k.
\end{displaymath}
We want to show that $\textrm{def}(A_i)\geq 2$ for each $i$. Clearly, we can assume that $\G(e_i)$'s are non-trivial. Moreover, we can assume that none of the $\G(u_i)$'s is procyclic. Indeed, if $\G(u_j)\cong \mathbb{Z}_p$, then since $\G(u_j)\amalg_{\G(e_j)}\G(u_{j+1})$ is a pro-$p$ group from the class $\mathcal{L}$, we must have $\G(e_j)=\G(u_j)$ and thus $\G(u_j)\amalg_{\G(e_j)}\G(u_{j+1})=\G(u_{j+1})$. Hence, we can assume that the vertex groups $\G(u_i)$ satisfy the hypothesis of the theorem. Thus $\textrm{def}(\G(u_i))\geq 2$ for each $i$. Therefore, by Lemma \ref{def} (a), we have $\textrm{def}(A_l)\geq 2$. Moreover, Lemma \ref{def} (b) gives
\begin{displaymath}
2\leq \textrm{def}(A_l) \leq \textrm{def}(A_{l+1}) \leq \cdots \leq \textrm{def}(A_k)=\textrm{def}(G).
\end{displaymath}
\end{proof}
\smallskip

For a finitely generated pro-$p$ group $G$, denote by $s_n(G)$ the number of open subgroups of $G$ of index at most $n$. A pro-$p$ group $G$ is said to have \emph{exponential subgroup growth} if
$$ \limsup_n \frac{\textrm{log}s_n(G)}{n}>0. $$
Lackenby proved that a finitely generated pro-$p$ group $G$ has exponential subgroup growth if and only if there is a strictly descending chain $\{ G_n \}$ of open normal subgroups of $G$ such that $\displaystyle \inf_n \frac{d(G_n)-1}{|G:G_n|} > 0 $ (see \cite{Lackenby}, Theorem 8.1).

Let $G$ be a pro-$p$ group from the class $\mathcal{L}$ such that every abelian pro-$p$ subgroup of $G$ is procyclic and $G$ itself is not procyclic, and let $\{G_n \}$ be a strictly descending chain of open normal subgroups of $G$. Since $G$ is finitely presented, we have that $\chi_2(G)$ and $\chi_2(G_n)$ are well defined, where $\chi_2(G):=\sum_{i=0}^2{(-1)^i\textrm{dim}_{\mathbb{F}_p}H^i(G, \mathbb{F}_p})$ is the second partial Euler-Poincar\'e characteristic of $G$. By Lemma 3.3.15 in \cite{Wingberg} we have
$\chi_2(G_n)\leq |G:G_n|\chi_2(G)$, which implies that $\textrm{def}(G_n)-1 \geq |G:G_n| (\textrm{def}(G)-1).$ Now from the result of Lackenby mentioned above and Theorem \ref{deficiency} we have the following.
\begin{thm}
Let $G$ be a pro-$p$ group from the class $\mathcal{L}$. If every abelian pro-$p$ subgroup of $G$ is procyclic and $G$ itself is not procyclic, then $G$ has exponential subgroup growth.
\end{thm}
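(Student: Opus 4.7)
The plan is to verify Lackenby's criterion recalled just above: it suffices to produce a strictly descending chain $\{G_n\}$ of open normal subgroups of $G$ with
\begin{displaymath}
\inf_n \frac{d(G_n)-1}{|G:G_n|} > 0.
\end{displaymath}
Such a chain exists trivially because $G$ is an infinite (torsion-free, non-procyclic) pro-$p$ group, so the substantive task is to obtain a uniform linear lower bound for $d(G_n)-1$ in terms of $|G:G_n|$.

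My approach is to propagate a deficiency bound from $G$ down to each open normal subgroup via the second partial Euler--Poincar\'e characteristic. Theorem \ref{deficiency} supplies $\textrm{def}(G)\geq 2$; in particular $G$ is finitely presented, and for every open subgroup $H\leq G$ the characteristic
\begin{displaymath}
\chi_2(H) = 1 - d(H) + r(H) = 1 - \textrm{def}(H)
\end{displaymath}
is well-defined. I would then invoke Lemma 3.3.15 of \cite{Wingberg}, which gives the multiplicativity $\chi_2(G_n)\leq |G:G_n|\,\chi_2(G)$; rewriting in terms of deficiencies yields
\begin{displaymath}
\textrm{def}(G_n) - 1 \geq |G:G_n|(\textrm{def}(G)-1) \geq |G:G_n|.
\end{displaymath}
Combining this with the trivial bound $d(G_n)\geq \textrm{def}(G_n)$ gives $(d(G_n)-1)/|G:G_n|\geq 1$ uniformly in $n$, and Lackenby's theorem then delivers exponential subgroup growth.

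No serious obstacle is expected: the theorem is a direct combination of Theorem \ref{deficiency} (linear deficiency lower bound for $G$), Wingberg's multiplicativity of $\chi_2$ on open subgroups, and Lackenby's characterization of exponential subgroup growth. The only point that requires any care is the sign bookkeeping when translating between $\chi_2$, deficiency, and Lackenby's ratio $(d(G_n)-1)/|G:G_n|$.
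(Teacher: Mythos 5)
Your proposal is correct and follows essentially the same route as the paper: Theorem \ref{deficiency} gives $\textrm{def}(G)\geq 2$, Lemma 3.3.15 of \cite{Wingberg} propagates this to $\textrm{def}(G_n)-1\geq |G:G_n|(\textrm{def}(G)-1)$ for open normal subgroups, and Lackenby's criterion then yields exponential subgroup growth. The sign bookkeeping via $\chi_2(H)=1-\textrm{def}(H)$ and the bound $d(G_n)\geq\textrm{def}(G_n)$ are exactly as in the paper's argument.
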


\section{Subgroup properties of pro-$p$ groups from the class $\mathcal{L}$}

In this section we prove parts $(5)$, $(6)$ and $(7)$ of Theorem C, stated in the introduction. We will need the following simple lemma.

\begin{lem}\label{nearnormal}
Let $G$ be a pro-$p$ group, and let $H$ and $K$ be finitely generated
subgroups of $G$. Let $A$ be a subgroup of $G$ that is contained
in both $H$ and $K$. If $A$ has finite index in both $H$ and $K$, then $A$ has a finite index subgroup that is normal
in $\langle H, K \rangle$.
\end{lem}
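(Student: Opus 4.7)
The plan is to exploit the pro-$p$ topology on $G$ to produce a single open normal subgroup $N$ of $G$ satisfying both $N\cap H\leq A$ and $N\cap K\leq A$; the required finite-index subgroup of $A$ will then be $B:=A\cap N$.

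Since $A$ is closed and of finite index in the closed subgroup $H$, it is open in $H$ in the subspace topology, so some open neighborhood of the identity in $G$ meets $H$ only inside $A$. Using that $G$ is pro-$p$ and therefore has a basis of open normal subgroups at the identity, this neighborhood can be refined to an open normal subgroup $U_H$ of $G$ with $U_H\cap H\leq A$. The analogous construction with $K$ in place of $H$ yields an open normal subgroup $U_K$ of $G$ with $U_K\cap K\leq A$, and then $N:=U_H\cap U_K$ is open normal in $G$ with both desired containments.

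I would then verify that $B:=A\cap N$ is normalized by each of $H$ and $K$, hence by $\langle H,K\rangle$. For any $h\in H$ and any $b\in B\leq A\leq H$, conjugation gives $hbh^{-1}\in H$; since $N$ is normal in $G$, we also have $hbh^{-1}\in N$, so $hbh^{-1}\in H\cap N\leq A$ and consequently $hbh^{-1}\in A\cap N=B$. The same argument with $K$ in place of $H$ completes the normality check.

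Finiteness of $[A:B]$ follows by compactness: $B=A\cap N$ is open in the profinite group $A$ (because $N$ is open in $G$ and $A$ is closed in $G$), and open subgroups of compact Hausdorff groups have finite index. The only mildly subtle point is recognizing that the two naive ``normal core'' constructions (the core of $A$ in $H$ and the core of $A$ in $K$) cannot simply be intersected to yield the lemma — such an intersection is in general not normalized by the other factor — so the argument must take place in the ambient $G$, whose pro-$p$ topology supplies a single subgroup normalized simultaneously by everything needed.
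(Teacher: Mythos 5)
Your proof is correct, but it takes a genuinely different route from the paper's. The paper forms the amalgamated free pro-$p$ product $G'=H\amalg_A K$, observes that it is proper because the natural epimorphism $G'\to\langle H,K\rangle$ restricts to injections on $H$ and $K$, and then invokes the Ribes--Zalesskii properness criterion (Theorem 9.2.4 of their book) to obtain filtered families $\{U_i\trianglelefteq_o H\}$ and $\{V_i\trianglelefteq_o K\}$ with trivial intersections and $U_i\cap A=V_i\cap A$; choosing $i$ with $U_i, V_i\le A$ forces $U_i=V_i$, yielding an open subgroup of $A$ that is normal in both $H$ and $K$ and hence in $G'$ and in its image $\langle H,K\rangle$. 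You instead work entirely inside the ambient group: since $A$ is closed of finite index, hence open, in each of $H$ and $K$, and since the open normal subgroups of the pro-$p$ group $G$ form a neighbourhood base at the identity, you produce a single $N\trianglelefteq_o G$ with $N\cap H\le A$ and $N\cap K\le A$, and verify directly that $B=A\cap N$ is normalized by $H$ and by $K$; the passage from this to normality in the \emph{closed} subgroup $\langle H,K\rangle$ uses that the normalizer of a closed subgroup is closed, a point worth stating explicitly but entirely routine. Your argument is more elementary and slightly more general --- it avoids the amalgamated-product machinery altogether, never uses finite generation of $H$ and $K$, and works verbatim over any profinite ambient group --- whereas the paper's argument stays within the amalgam-theoretic framework it uses throughout and produces a subgroup that is already normal in $H\amalg_A K$ itself. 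Your closing remark about why intersecting the two normal cores fails is also apt: that intersection need not be normalized by the other factor, which is exactly why some external mechanism (your ambient open normal subgroup, or the paper's compatible filtrations) is required.
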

\begin{proof}
 Since the restrictions of the natural epimorphism $\psi: H\amalg_{A}K \to \langle H, K \rangle$ to $H$ and $K$ are injections, the amalgamated free pro-$p$ product
 $G'=H\amalg_{A}K$ is proper, i.e., $H$, $K$ and $A$ are subgroups of $G'$. If $A$ is one of $H$ or $K$, then the result is clear. Therefore we can assume that
 $A$ is different from $H$ and $K$. Note that if $U$ is an open subgroup of $A$ normal in $G'$, then $\psi(U)$ is an open subgroup of $A$ normal in $\langle H, K \rangle$. Hence in order to prove the lemma it suffices to show that $A$ has an open subgroup which is normal in $G'$.

  Since $G'=H\amalg_{A}K$ is proper, by Theorem 9.2.4 in \cite{Ribes1}, there is an indexing set $I$ and families
\begin{displaymath}
\{ U_i ~ | ~ U_i \trianglelefteq _o H \}_{i\in I} \textrm{ and }
\{ V_i ~ | ~ V_i \trianglelefteq _o K \}_{i\in I}
\end{displaymath}
with the property
\begin{displaymath}
\bigcap_{i\in I}U_i=1=\bigcap_{i\in I}V_i \textrm{ and } U_i\cap A
= V_i \cap A \textrm{ for each } i\in I.
\end{displaymath}
We can assume that these families are filtered from below. Since
$A$ is of finite index in both $H$ and $K$, it follows that there
is some $k\in I$ such that $U_k \leq A$ and $V_k\leq A$. Thus
\begin{displaymath}
U_k=U_k\cap A= V_k\cap A= V_k
\end{displaymath}
and consequently $U_k$ is an open normal subgroup of both $H$ and
$K$. Hence $U_k$ is an open subgroup of $A$ which is normal in $G$. This finishes the proof.
 \end{proof}

Let $G$ be a (profinite) group and let $H$ be a (closed) subgroup
of $G$. The \emph{commensurator} of $H$ in $G$, denoted by
$\textrm{Comm}_G(H)$, is the set
\begin{displaymath}
\{g\in G ~ | ~ H\cap gHg^{-1} \textrm{ has finite index in both }
H \textrm{ and } gHg^{-1} \}.
\end{displaymath}
It is not hard to check that $\textrm{Comm}_G(H)$ is a subgroup of
$G$ (possibly not closed if $G$ is profinite) that contains $N_G(H)$.

The following result is well known; for completeness we give its proof.
\begin{pro}\label{commensurator}
Let $G$ be a group, and let $H$ and $K$ be subgroups of $G$ such
that $K\leq H$. If $K$ has finite index in $H$, then
$\textrm{Comm}_G(K)=\textrm{Comm}_G(H)$.
\end{pro}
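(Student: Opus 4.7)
The plan is to prove both inclusions $\textrm{Comm}_G(H) \subseteq \textrm{Comm}_G(K)$ and $\textrm{Comm}_G(K) \subseteq \textrm{Comm}_G(H)$ by elementary index chasing, using repeatedly the two standard facts: (i) if $A \leq B \leq C$ then $[C:A] = [C:B]\cdot[B:A]$, so any subgroup sandwiched between two subgroups of finite relative index also has finite relative index in each; and (ii) if $[C : B] < \infty$ then for any subgroup $A \leq C$ one has $[A : A \cap B] \leq [C : B] < \infty$.

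For the inclusion $\textrm{Comm}_G(H) \subseteq \textrm{Comm}_G(K)$, I would take $g \in \textrm{Comm}_G(H)$, so that $[H : H \cap gHg^{-1}]$ and $[gHg^{-1} : H \cap gHg^{-1}]$ are finite; I must show the analogous statement with $K$ in place of $H$. I would write $K \cap gKg^{-1} \leq K \cap gHg^{-1} \leq K$, bound $[K : K \cap gHg^{-1}]$ by $[H : H \cap gHg^{-1}]$ using fact (ii) (and the observation that $K \cap gHg^{-1} = K \cap (H \cap gHg^{-1})$ since $K \leq H$), and bound $[K \cap gHg^{-1} : K \cap gKg^{-1}]$ by $[gHg^{-1} : gKg^{-1}] = [H : K]$, again by fact (ii). Multiplying these bounds gives $[K : K \cap gKg^{-1}] < \infty$, and a symmetric argument (swapping the roles of $K$ and $gKg^{-1}$) yields $[gKg^{-1} : K \cap gKg^{-1}] < \infty$.

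For the reverse inclusion $\textrm{Comm}_G(K) \subseteq \textrm{Comm}_G(H)$, I would take $g \in \textrm{Comm}_G(K)$ and observe that $K \cap gKg^{-1} \leq H \cap gHg^{-1} \leq H$. Then
\begin{displaymath}
[H : H \cap gHg^{-1}] \leq [H : K \cap gKg^{-1}] = [H : K]\cdot[K : K \cap gKg^{-1}] < \infty,
\end{displaymath}
and analogously $[gHg^{-1} : H \cap gHg^{-1}] < \infty$ using $[gHg^{-1} : gKg^{-1}] = [H : K]$. This shows $g \in \textrm{Comm}_G(H)$.

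There is no serious obstacle here: the proof is purely combinatorial on indices of subgroups, with no topology or group-theoretic structure needed beyond the fact that finiteness of index is multiplicative in towers and is preserved by intersecting with an ambient subgroup. The only point to keep in mind is that $\textrm{Comm}_G(H)$ and $\textrm{Comm}_G(K)$ are sets defined by conditions on $g$, so the proof proceeds element by element, and the identity $K \cap gHg^{-1} = K \cap (H \cap gHg^{-1})$ (valid because $K \leq H$) is what lets the two-sided finite index on $H$ descend to finite index on $K$.
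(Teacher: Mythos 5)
Your proof is correct and follows essentially the same route as the paper: both inclusions are obtained by elementary index chasing, using that finite index is multiplicative in towers and is preserved under intersection with a subgroup, together with the containments $K\cap gKg^{-1}\leq K\cap gHg^{-1}\leq K$ and $K\cap gKg^{-1}\leq H\cap gHg^{-1}\leq H$. The paper's proof is just a slightly more terse version of the same argument.
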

\begin{proof}
Let $g\in \textrm{Comm}_G(K) $. Then $K\cap gKg^{-1}$ has finite
index in $K$, and hence in $H$. Since $K\cap gKg^{-1} \subseteq
H\cap gHg^{-1}$, we have that $H\cap gHg^{-1}$ has finite index in
H. Similarly $K\cap gKg^{-1}$ has finite index in $gKg^{-1}$, and
hence $H\cap gHg^{-1}$ has finite index in $gHg^{-1}$. Thus
$\textrm{Comm}_G(K) \subseteq \textrm{Comm}_G(H)$.

Conversely, let $g\in \textrm{Comm}_G(H)$. Then $H\cap gHg^{-1}$ has finite index in $H$. Thus $K\cap H\cap gHg^{-1}=K\cap gHg^{-1}$ has finite index in $K\cap H=K$. Similarly $K\cap gKg^{-1}$ has finite index in $K\cap gHg^{-1}$. Hence $K\cap gKg^{-1}$ has finite index in $K$. In a similar way we can show that $K\cap gKg^{-1}$ has finite index in $gKg^{-1}$. Thus $\textrm{Comm}_G(H) \subseteq \textrm{Comm}_G(K)$.
\end{proof}

\begin{definition}
Let $G$ be a (pro-$p$) group and let $H$ be a
finitely generated subgroup of $G$. A \emph{root} of $H$ in $G$,
denoted by $\textrm{root}_G(H)$, is a subgroup $H'$ of $G$ that contains $H$
with $|H':H|$ finite and which contains every subgroup $K$
of $G$ that contains $H$ with $|K:H|$ finite.
\end{definition}
Note that if $H$ is a finitely generated subgroup of finite index
in $G$, then it is obvious that $\textrm{root}_G(H)=G$.

\begin{thm}\label{main}
Let $G$ be a pro-$p$ group from the class $\mathcal{L}$. Then
\begin{itemize}
\item[(1)] [Greenberg-Stallings Property] If $H$ and $K$ are finitely generated subgroups of $G$
with the property that $H\cap K$ has finite index in both $H$ and
$K$, then $H\cap K$ has finite index in $\langle H, K \rangle$; \item[(2)] If $H$ is a finitely generated subgroup of
$G$, then $H$ has a root in $G$; \item[(3)] If $H$ is a finitely
generated non-abelian subgroup of $G$, then
$|\textrm{Comm}_G(H):H|<\infty$.
\end{itemize}
\end{thm}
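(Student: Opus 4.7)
The plan is to deduce all three parts from a common auxiliary statement, which I will call the \emph{Normal Subgroup Lemma} (NSL): every finitely generated nontrivial normal subgroup of a non-abelian pro-$p$ group $L$ in the class $\mathcal{L}$ has finite index in $L$. I expect establishing NSL to be the main obstacle, and I will prove it first by induction on the weight $n$ of $L$. The base case $n=0$, when $L$ is a finitely generated non-abelian free pro-$p$ group, is classical. For the inductive step, the key tools are the $2$-acylindricity of $L$'s action on the standard pro-$p$ tree $T_{G_n}$ of its ambient $G_n$ (Lemma~\ref{2-acylindrical}) and the CSA-type property of groups in $\mathcal{L}$ (Theorem~5.1 of \cite{Kochloukova2}). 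If $N$ fixes a vertex of $T_{G_n}$, then $N$ is contained either in a conjugate of the abelian factor $A$, in which case the CSA argument combined with the self-normalizing property of maximal abelian subgroups forces $L$ itself to lie in the same conjugate of $A$ and hence be abelian, or in a conjugate of $G_{n-1}$, in which case the minimality of the weight of $L$ and $2$-acylindricity reduce matters to $N$ being procyclic, whence Theorem~5.1 and Corollary~5.5 of \cite{Kochloukova2} again force $L$ to be solvable and hence abelian---a contradiction in both cases. If $N$ fixes no vertex of $T_{G_n}$, I will apply Theorem~A to $L$ itself to write $L=\Pi_1(\G,\Gamma)$ with procyclic edge groups, observe that by normality the minimal $N$-invariant subtree coincides with $S_L$, and analyze the intersections $N\cap\G(v)\trianglelefteq\G(v)$ inside each non-abelian vertex group $\G(v)$ of weight $<n$ via the inductive hypothesis; combined with the finiteness of $\Gamma$ and procyclicity of the edge groups, this should yield $[L:N]<\infty$.

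Granting NSL, Part~(5) follows quickly. Set $L=\langle H,K\rangle$ and apply Lemma~\ref{nearnormal} to produce $N\trianglelefteq L$ of finite index in $H\cap K$. If $N=1$ then $H$ and $K$ are finite and hence trivial by torsion-freeness of $G$; otherwise either $L$ is non-abelian and NSL gives $[L:N]<\infty$, or $L$ is abelian (so $L\cong\mathbb{Z}_p^k$) and a direct rank computation in $\mathbb{Z}_p^k$ completes the proof.

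For Part~(6), let $\mathcal{F}=\{K : H\le K\le G,\ [K:H]<\infty\}$. By Part~(5) combined with Lemma~\ref{nearnormal}, the family $\mathcal{F}$ is closed under joins, hence directed. For non-abelian $H$, Theorem~\ref{Euler} gives $\chi(H)\in\mathbb{Z}_{<0}$, and each $K\in\mathcal{F}$ is also non-abelian in $\mathcal{L}$ with $\chi(K)\in\mathbb{Z}_{<0}$; multiplicativity of the Euler characteristic under finite-index inclusion then yields $[K:H]=|\chi(H)|/|\chi(K)|\le|\chi(H)|$, uniformly in $K$, so $\mathcal{F}$ attains its supremum and the supremum is the root. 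For abelian $H$, any $K\in\mathcal{F}$ has $\chi(K)=0$ and hence is abelian by Theorem~\ref{Euler}; the CSA-type structure of $\mathcal{L}$ then places $K$ in the unique maximal abelian subgroup $A\cong\mathbb{Z}_p^k$ of $G$ containing $H$, and the isolator of $H$ in $A$ is the root.

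Part~(7) combines (5), (6) and NSL. Let $H$ be finitely generated non-abelian with root $H^*$; by Proposition~\ref{commensurator}, $\textrm{Comm}_G(H)=\textrm{Comm}_G(H^*)$. For $g\in\textrm{Comm}_G(H^*)$, Part~(5) applied to $H^*$ and $gH^*g^{-1}$ shows $\langle H^*,gH^*g^{-1}\rangle$ contains $H^*$ with finite index; the root property of $H^*$ then forces $\langle H^*,gH^*g^{-1}\rangle=H^*$, so $gH^*g^{-1}=H^*$, giving $\textrm{Comm}_G(H^*)=N_G(H^*)$. Finally, for $g\in N_G(H^*)$ the finitely generated subgroup $\langle g,H^*\rangle$ contains $H^*$ normally; NSL applied to $\langle g,H^*\rangle$ yields $[\langle g,H^*\rangle:H^*]<\infty$, and the root property again forces $\langle g,H^*\rangle=H^*$, whence $g\in H^*$. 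Thus $\textrm{Comm}_G(H)=H^*$, giving $|\textrm{Comm}_G(H):H|=[H^*:H]<\infty$.
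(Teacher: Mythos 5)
Your deductions of parts (1)--(3) from the ``Normal Subgroup Lemma'' are essentially the paper's own argument: part (1) is Lemma~\ref{nearnormal} plus the finite-index property of finitely generated normal subgroups; part (2) is the Euler-characteristic bound from Theorem~\ref{Euler} together with the Greenberg--Stallings property in the non-abelian case and the maximal-abelian/isolator argument in the abelian case; part (3) is the reduction to the root via Proposition~\ref{commensurator}. The one structural difference is that the paper does not prove your NSL at all: it is exactly Theorem~6.5 of \cite{Kochloukova2} (and the normalizer statement you derive from it in part (3) is Theorem~6.7 there), both of which the paper simply cites. So the entire burden you place on yourself in the first paragraph is avoidable.

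That matters because your sketch of NSL is where the genuine gap lies. In the inductive step where $N$ fixes no vertex, you propose to apply the inductive hypothesis to $N\cap\G(v)\trianglelefteq\G(v)$ for the non-abelian vertex groups $\G(v)$ of lower weight. But the inductive hypothesis requires the normal subgroup to be finitely generated and nontrivial, and neither is available: a closed normal subgroup of a finitely generated pro-$p$ group need not be finitely generated (so $N\cap\G(v)$ being an intersection of the finitely generated $N$ with $\G(v)$ gives you nothing), and you have no argument that $N\cap\G(v)\neq 1$. Even granting finite index of $N\cap\G(v)$ in every vertex group, the passage to $[L:N]<\infty$ is asserted (``this should yield'') rather than proved --- one must also control the image of $N$ in the free pro-$p$ quotient $\pi_1(\Gamma)$ and the edge identifications. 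The vertex-fixing case is likewise only a plausible outline (the claim that $2$-acylindricity ``reduces matters to $N$ being procyclic'' needs the $L$-orbit of the fixed vertex to have diameter greater than $2$, which you do not establish). The fix is simply to replace your NSL by the citation of Theorem~6.5 in \cite{Kochloukova2}; with that substitution the rest of your argument goes through and coincides with the paper's proof.
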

\begin{proof}
 (1)
 Let $H$ and $K$ be finitely generated subgroups of $G$ with the property that $H\cap K$ has finite index in both $H$ and $K$. Note that if $\langle H, K \rangle$ is abelian then the result
follows from the structure theorem of the torsion free finitely
generated abelian pro-$p$ groups (see the proof of part (2)). Thus we can assume that $\langle H, K \rangle$ is not abelian. By Lemma \ref{nearnormal}, there exists a  finitely
generated open subgroup $U$ of $H\cap K$ that is normal in $\langle H, K
\rangle$. Hence by Theorem 6.5 in \cite{Kochloukova2}, we have $|\langle H, K \rangle :
U|< \infty$. This implies that $|\langle H, K \rangle :
H\cap K|<\infty$.

\smallskip

(2) Let $H$ be an abelian finitely generated subgroup of $G$. Note
that if $H\leq A \leq G$ and $|A:H|<\infty$, then by Corollary 5.4
in \cite{Kochloukova2} it follows that $A$ is abelian. Consider
the set
\begin{displaymath}
\mathcal{S}(H)=\{ A ~ | ~ H\leq A \leq G, A \textrm{ is finitely
generated and abelian} \}.
\end{displaymath}
Let
$A_1\leq A_2\leq \cdots$ be an ascending chain of elements in
$\mathcal{S}(H)$. Then $A=\langle \cup_{i\geq 1}A_i
\rangle$ is abelian. Using Corollary 5.5 in \cite{Kochloukova2} and obvious induction, it is not hard to see that $A$ is finitely generated.  Thus
every ascending chain in $\mathcal{S}(H)$ has an upper bound. By
Zorn's lemma it follows that $\mathcal{S}(H)$ has a maximal
element; denote this element by $S$. From the structure theorem of
finitely generated free modules over principal ideal domains it
follows that there exists a basis $y_1, y_2,..., y_n$ of $S$ so
that $p^{a_1}y_1, p^{a_2}y_2,...,p^{a_m}y_m$ is a basis of $H$
where $m\leq n$ and $a_1, a_2,...,a_m$ are non-zero integers with
the relation $a_1\leq a_2 \leq\cdots \leq a_m$. Set $N=\langle
y_1,...,y_m \rangle$; it is easy to see that
$N=\textrm{root}_G(H)$.

Now let $H$ be a non-abelian finitely generated subgroup of $G$. By Theorem \ref{Euler} we have that $\chi(H)<0$. If $H\leq K$ and $|K:H|< \infty$, then from the multiplicativity of the Euler-Poincar\'e characteristic it follows that $\chi(H)\leq \chi(K)=\frac{\chi(H)}{|K:H|}<0$. Choose $K$ such that $H\leq K$, the index $|K:H|< \infty$ and $\chi(K)$ is as large as possible. We claim that $K$ is a root of $G$. Indeed, suppose that there is some $M\leq G$ such that $H\leq M$, the index $|M:H|<\infty$ and $K$ does not contain $M$. Then by Greenberg-Stallings property, we have that $H$ is also of finite index in $A=\langle K, M \rangle$. But then $\chi(A)=\frac{\chi(K)}{|A:K|}>\chi(K)$, which is a contradiction. Thus we must have $K=\textrm{root}_G(H)$.
\smallskip

(3) Let $H$ be a finitely generated non-abelian subgroup of $G$. By (2), $H$ has a root in $G$. By Proposition
\ref{commensurator} we have
\begin{displaymath}
\textrm{Comm}_G(H)=\textrm{Comm}_G(\textrm{root}_G(H)).
\end{displaymath}
Since $\textrm{root}_G(\textrm{root}_G(H))=\textrm{root}_G(H)$, it
suffices to prove that if $H=\textrm{root}_G(H)$, then
$H=N_G(H)=\textrm{Comm}_G(H)$.

Suppose that $H=\textrm{root}_G(H)$. By Theorem 6.7 in \cite{Kochloukova2},  $H$ has finite index in
$N_G(H)$. Hence we have
\begin{displaymath}
H\leq N_G(H)\leq \textrm{root}_G(H)=H.
\end{displaymath}
Thus $H=N_G(H)$. Also, it is clear that $N_G(H)\leq
\textrm{Comm}_G(H)$. It remains to show that $\textrm{Comm}_G(H)
\leq N_G(H)$. Let $g\in \textrm{Comm}_G(H)$. This means that
$H\cap gHg^{-1}$ has finite index in both $H$ and $gHg^{-1}$, and
as a consequence we have
\begin{displaymath}
\textrm{root}_G( H\cap
gHg^{-1})=\textrm{root}_G(gHg^{-1})=\textrm{root}_G(H)=H.
\end{displaymath}
It follows that
\begin{displaymath}
\langle gHg^{-1}, H \rangle = H.
\end{displaymath}
Suppose that $g\in \textrm{Comm}_G(H)\backslash N_G(H)$. Then
$gHg^{-1}\neq H$, and hence $H$ is properly contained in $\langle
gHg^{-1}, H \rangle = H$, a contradiction. Thus we must have
$\textrm{Comm}_G(H)\backslash N_G(H)=\emptyset$, i.e.,
$N_G(H)=\textrm{Comm}_G(H)$, as desired. This finishes the proof.
\end{proof}

\begin{definition}
For a given subgroup $H$ of $G$, the \emph{normalizer tower} of $H$ in $G$ is defined as
\begin{displaymath}
N^0_G(H)=H, ~ N^{\alpha + 1}_G(H)=N_G(N^{\alpha}_G(H))
\end{displaymath}
and if $\alpha$ is a limit ordinal, then
\begin{displaymath}
N^{\alpha}_G(H) = \bigcup_{\beta < \alpha} N^{\beta}_G(H).
\end{displaymath}
\end{definition}

By part (2) of the above theorem and Theorem 6.7 in \cite{Kochloukova2} we have the following.
\begin{cor}
Let $G$ be a pro-$p$ group from the class $\mathcal{L}$. If $H$ is a finitely generated non-abelian subgroup of $G$, then the normalizer tower of $H$ in $G$ stabilizes after finitely many steps, i.e., it has finite length.
\end{cor}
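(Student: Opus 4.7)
The plan is to iterate Theorem 6.7 of \cite{Kochloukova2} inside the root provided by part (2) of Theorem 4.4, using the fact that only finitely many subgroups sit between a subgroup of finite index.

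Let $R := \mathrm{root}_G(H)$, which exists by part (2) of Theorem 4.4. Since $H$ is non-abelian and finitely generated and $|R:H|<\infty$, the group $R$ is itself finitely generated and non-abelian. I would first show by induction on $n$ that every member $N^n_G(H)$ of the normalizer tower lies inside $R$. The base case $n=0$ is trivial. For the inductive step, assume $H\le N^n_G(H)\le R$; then $N^n_G(H)$ is finitely generated (it is sandwiched between $H$ and $R$, and $|R:H|<\infty$) and non-abelian (it contains $H$). Applying Theorem 6.7 of \cite{Kochloukova2} to $N^n_G(H)$ gives $|N^{n+1}_G(H):N^n_G(H)|<\infty$, so
\[
|N^{n+1}_G(H):H|\;\le\;|N^{n+1}_G(H):N^n_G(H)|\cdot|N^n_G(H):H|\;<\;\infty,
\]
and the defining property of the root forces $N^{n+1}_G(H)\le R$.

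Once this is established, the tower $H=N^0_G(H)\le N^1_G(H)\le N^2_G(H)\le\cdots$ is an ascending chain of subgroups all sitting between $H$ and $R$. Because $|R:H|$ is finite, there are only finitely many subgroups of $G$ lying between $H$ and $R$, so this chain must stabilize after finitely many steps. In particular, no limit ordinals are ever reached, and the normalizer tower has finite length.

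There is no real obstacle here: the argument is essentially the observation that the root is a common ceiling for the whole tower, and Theorem 6.7 of \cite{Kochloukova2} guarantees each single step of the tower is a finite-index enlargement. The only mild point to verify is that at each stage $N^n_G(H)$ continues to satisfy the hypotheses of Theorem 6.7 (finitely generated and non-abelian), which follows automatically from being trapped between $H$ and $R$.
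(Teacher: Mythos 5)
Your proof is correct and is precisely the argument the paper intends: the paper's entire justification for this corollary is the one-line remark that it follows from part (2) of Theorem \ref{main} and Theorem 6.7 in \cite{Kochloukova2}, i.e., the root of $H$ serves as a ceiling for the whole normalizer tower while Theorem 6.7 makes each step a finite-index extension. Your write-up merely supplies the routine details (the induction keeping each $N^n_G(H)$ inside the root, and the finiteness of the chain of intermediate subgroups) that the paper leaves implicit.
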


\begin{pro}\label{comindex}
Let $G$ be a pro-$p$ group from the class $\mathcal{L}$ and let $H$ be a non-abelian finitely generated subgroup of $G$.
Then $\textrm{Comm}_G(H)=\textrm{root}_G(H)$. In particular, the group $H$ has finite index in $\textrm{Comm}_G(H)$.
\end{pro}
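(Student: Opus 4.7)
The plan is to reduce the problem to the ``self-root'' case already handled inside the proof of Theorem \ref{main}(3). Set $R:=\textrm{root}_G(H)$, which exists by Theorem \ref{main}(2). Since $H\le R$ with $|R:H|<\infty$ and $H$ is finitely generated and non-abelian, so is $R$.

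My first step is to verify that $R$ is its own root, i.e., $\textrm{root}_G(R)=R$. Indeed, if $R\le K\le G$ with $|K:R|<\infty$, then $H\le K$ with $|K:H|=|K:R|\cdot|R:H|<\infty$, so the universal property of $\textrm{root}_G(H)$ forces $K\le R$, hence $K=R$.

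My second step is to appeal to Proposition \ref{commensurator} for the finite-index inclusion $H\le R$, which gives $\textrm{Comm}_G(H)=\textrm{Comm}_G(R)$. Combined with the self-root portion of the proof of Theorem \ref{main}(3) applied to $R$ (the part of that argument establishing $\textrm{Comm}_G(L)=L$ whenever $L$ is a finitely generated non-abelian subgroup satisfying $L=\textrm{root}_G(L)$), this yields
\[
\textrm{Comm}_G(H)=\textrm{Comm}_G(R)=R=\textrm{root}_G(H).
\]
The ``in particular'' clause is then immediate, since $|R:H|<\infty$ by the very definition of the root.

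No substantial obstacle is expected: the proposition is essentially a bookkeeping corollary of Theorem \ref{main} and Proposition \ref{commensurator}, the only non-trivial point being the idempotence $\textrm{root}_G(\textrm{root}_G(H))=\textrm{root}_G(H)$, which follows directly from the universal property of the root.
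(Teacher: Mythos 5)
Your proposal is correct and is essentially identical to the paper's own argument: the paper simply points back to the proof of Theorem \ref{main}(3), which runs exactly as you describe — existence of the root, idempotence of the root, $\textrm{Comm}_G(H)=\textrm{Comm}_G(\textrm{root}_G(H))$ via Proposition \ref{commensurator}, and the self-rooted case giving $L=N_G(L)=\textrm{Comm}_G(L)$. Nothing is missing.
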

\begin{proof} Was performed in the proof of part (3) of Theorem \ref{main}.

\end{proof}

The following result generalizes Corollary 6.6 in \cite{Kochloukova2}.
\begin{cor}\label{cor1}
Let $G$ be a pro-$p$ group from the class $\mathcal{L}$. If $F$ is a finitely generated free pro-$p$ subgroup of $G$ with $d(F)$ not congruent to $1$ modulo $p$, then
 \begin{displaymath}
 F=N_G(F)=\textrm{root}_G(F)=\textrm{Comm}_G(F).
 \end{displaymath}
\end{cor}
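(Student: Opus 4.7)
The plan is to reduce the statement, via Theorem \ref{main} and Proposition \ref{comindex}, to the assertion that $F=\mathrm{root}_G(F)$, and then to exploit the hypothesis $d(F)\not\equiv 1\pmod p$ through the Schreier formula for free pro-$p$ groups. Note first that $d(F)\not\equiv 1\pmod p$ rules out $d(F)=1$, so $F$ is either trivial (a degenerate case) or non-abelian; I assume $d(F)\geq 2$. Then Theorem \ref{main}(3) applies and, combined with Proposition \ref{comindex}, yields
$$
N_G(F)\;\leq\;\mathrm{Comm}_G(F)\;=\;\mathrm{root}_G(F),
$$
with $F$ of finite index in $\mathrm{root}_G(F)$. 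Moreover, the proof of Theorem \ref{main}(3) shows that if one can establish $F=\mathrm{root}_G(F)$, then automatically $F=N_G(F)=\mathrm{Comm}_G(F)$, so the entire corollary reduces to the identity $F=\mathrm{root}_G(F)$.

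Set $K:=\mathrm{root}_G(F)$, a finitely generated subgroup of $G$ containing $F$ as an open subgroup of finite index. Since $K$ is a finitely generated subgroup of some group in $\mathcal{G}_n$, it belongs to $\mathcal{L}$, and in particular is torsion-free: by induction on $n$, each $G_n$ is a proper free amalgamated pro-$p$ product of a torsion-free group with a free abelian group over a procyclic subgroup, hence torsion-free, and its subgroups inherit this property. Thus $K$ is a torsion-free pro-$p$ group containing the open free pro-$p$ subgroup $F$, so $\mathrm{cd}_p(K)\leq 1$, and by the Tate--Serre theorem characterizing free pro-$p$ groups, $K$ is itself free pro-$p$.

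Finally I would apply the Schreier formula to the inclusion $F\leq K$ of finitely generated free pro-$p$ groups with $|K:F|=p^k$, yielding
$$
d(F)-1\;=\;p^k\bigl(d(K)-1\bigr).
$$
If $k\geq 1$ then $p\mid d(F)-1$, contradicting the hypothesis $d(F)\not\equiv 1\pmod p$. Hence $k=0$ and $K=F$, completing the reduction.

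The only non-routine step is the claim that $K$ is free pro-$p$, which relies on the two inputs that pro-$p$ groups in $\mathcal{L}$ are torsion-free and that torsion-free pro-$p$ groups of cohomological dimension at most one are free pro-$p$. Both are standard in the theory but require explicit citation; once granted, Schreier's formula closes the argument immediately.
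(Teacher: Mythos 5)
Your proposal is correct and follows essentially the same route as the paper: both arguments show that any finite overgroup of $F$ (you use $\mathrm{root}_G(F)$ directly, the paper an index-$p$ intermediate subgroup) is torsion-free and hence free pro-$p$ by Serre's theorem, derive $d(F)\equiv 1 \pmod p$ from the Nielsen--Schreier formula to force $F=\mathrm{root}_G(F)$, and then conclude via the finiteness of $|N_G(F):F|$ and Proposition \ref{comindex}.
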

\begin{proof}
Let $F$ be a finitely generated subgroup of $G$ with $d(F)$ not
congruent to $1$ modulo $p$. By part (2) of Theorem \ref{main} we know
that $F$ has a root. Suppose that $\textrm{root}_G(F)\neq F$. Then
there is a subgroup $H$ of $G$ that contains $F$ and such that
$|H:F|=p$. Since $H$ is torsion free, by Serre's result
\cite{Serre} we have that $H$ is a free pro-$p$ group. From
Nielsen-Schreier formula we have $d(F)-1=p(d(H)-1)$. Thus
$d(F)\equiv 1 (\textrm{mod }p)$, which is a contradiction. Thus we
must have $F=\textrm{root}_G(F)$. By Theorem 6.7 in \cite{Kochloukova2},  $F$ has finite index in
$N_G(F)$. Hence $F\leq N_G(F)\leq \textrm{root}_G(F)=F $. By the
previous proposition we have $F=N_G(F)=\textrm{root}_G(F)=\textrm{Comm}_G(F)$.
\end{proof}
A finitely generated subgroup $H$ of a group $G$ is said to be \emph{self-rooted} if it  has a root in $G$ and $\textrm{root}_G(H)=H$. From the above corollary it follows that if $G$ is a non-abelian pro-$p$ group from the class $\mathcal{L}$, then for any $n\in \mathbb{N}$ there is a self-rooted finitely generated subgroup $F$ of $G$ with $d(F)>n$.

\smallskip

Let $G$ be a pro-$p$ group from the class $\mathcal{L}$. To every
finitely generated self-rooted subgroup $H$ of $G$ we associate
the set
\begin{displaymath}
H^*=\{ U ~ | ~ U\leq H \textrm{ and } |H:U| < \infty   \}.
\end{displaymath}
Consider the sets
\begin{displaymath}
\mathcal{M}(G)= \{ H ~ | ~ H \textrm{ is a finitely generated subgroup of } G \},
\end{displaymath}
\begin{displaymath}
 \overline{G}= \{ H ~ | ~ H\leq G \},
 \end{displaymath}
 \begin{displaymath}
\mathcal{L}(G)=\{H^* ~ | ~ H \textrm{ is a finitely generated self-rooted subgroup of } G  \}
\end{displaymath}
 and recall that we can consider $\overline{G}$ as a lattice with the standard meet and joint operations for groups. One can easily prove the following result.
 \begin{pro} \label{lattice}
  Let $G$ be a pro-$p$ group from the class $\mathcal{L}$.
\begin{itemize}
\item[a)] If $H$ is a self-rooted subgroup of $G$, then $H^*$ is a convex sublattice of $\overline{G}$ with greatest element $H$ and without a least element.
\item[b)] The set $\mathcal{L}(G)$ forms a partition of $\mathcal{M}(G)$, i.e., any two distinct elements in $\mathcal{L}(G)$ are disjoint and $\mathcal{M}(G)$ is equal to the union of all the elements in $\mathcal{L}(G)$.
\end{itemize}
\end{pro}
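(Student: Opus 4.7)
The plan is to derive both statements from the key properties of the root operation established in part (2) of Theorem \ref{main}, together with its idempotence.

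\textbf{Part (a).} First I would verify the sublattice axioms directly. The element $H$ lies in $H^*$ trivially and dominates every other member, so it is the greatest element. For $U,V\in H^*$, the intersection $U\cap V$ has index at most $|H:U|\cdot|H:V|$ in $H$, and $\langle U,V\rangle$ already contains $U$, so both are in $H^*$; convexity is then immediate, since $U\leq K\leq V$ with $U,V\in H^*$ forces $|H:K|<\infty$. One bookkeeping point worth highlighting is that the intermediate $K$ produced by convexity must be a closed subgroup, but this is automatic: every finite-index subgroup of the topologically finitely generated pro-$p$ group $H$ is open. For the absence of a least element (tacitly assuming $H\neq 1$, as is the case for every interesting self-rooted subgroup of the torsion-free ambient group $G$), I would observe that any $U\in H^*$ is a non-trivial finitely generated pro-$p$ group, so its Frattini subgroup $\Phi(U)$ is a proper open subgroup of $U$, providing a strictly smaller element of $H^*$.

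\textbf{Part (b).} The central observation I would isolate is: if $H$ is self-rooted and $K\in H^*$, then $\textrm{root}_G(K)=H$. Indeed $H$ contains $K$ with finite index, so $H\leq\textrm{root}_G(K)$; conversely $\textrm{root}_G(K)$ contains $H$ with finite index, so by the self-rootedness of $H$ one gets $\textrm{root}_G(K)\leq\textrm{root}_G(H)=H$. Disjointness then follows at once: any $K\in H_1^*\cap H_2^*$ with $H_1,H_2$ self-rooted satisfies $H_1=\textrm{root}_G(K)=H_2$, and hence $H_1^*=H_2^*$. For the covering statement, I would take an arbitrary $K\in\mathcal{M}(G)$; by part (2) of Theorem \ref{main} the root $H:=\textrm{root}_G(K)$ exists in $G$, is finitely generated (it contains $K$ with finite index), and is itself self-rooted, because any $L$ of finite index over $H$ is also of finite index over $K$ and is therefore contained in $H$ by definition of the root. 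Thus $K\in H^*\in\mathcal{L}(G)$.

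The step requiring the most care is the idempotence identity $\textrm{root}_G(\textrm{root}_G(K))=\textrm{root}_G(K)$, on which the whole partition structure pivots; once Theorem \ref{main}(2) is available it reduces to a two-line verification, and the rest of the proof is routine manipulation with finite indices and the closedness convention for pro-$p$ subgroups.
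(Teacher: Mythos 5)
Your proof is correct and supplies exactly the routine verification the paper omits (the proposition is stated with only the remark that one can easily prove it), resting as intended on Theorem \ref{main}(2) together with the idempotence of $\textrm{root}_G$ and standard finite-index bookkeeping. Your two side observations --- that the intermediate subgroup in the convexity check is automatically closed of finite index, and that the ``without a least element'' clause tacitly requires $H\neq 1$ --- are both apt and worth recording.
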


\section{Demushkin groups}

\begin{definition}
Let $G$ be a pro-$p$ group. We say $G$ is an \emph{$IF$-group} if all finitely generated infinite index subgroups of $G$ are free pro-$p$ groups.
\end{definition}
Free pro-$p$ groups are obvious examples of $IF$-groups. Infinite Demushkin groups, whose definition we recall below, form another family of $IF$-groups.
\begin{definition} A pro-$p$ group $G$ is called a $Demushkin$ group if it satisfies the following conditions:
\begin{itemize}
\item[(i)] $\textrm{dim} _{\mathbb{F}_p} H^1(G,\mathbb{F}_p) < \infty$,
\item[(ii)] $\textrm{dim} _{\mathbb{F}_p} H^2(G,\mathbb{F}_p)=1$, \textrm{ and }
\item[(iii)] the cup-product $H^1(G,\mathbb{F}_p)\times H^1(G,\mathbb{F}_p) \to H^2(G,\mathbb{F}_p) \cong \mathbb{F}_p$ is a non-degenerate bilinear form.
\end{itemize}
\end{definition}
Infinite Demushkin groups are precisely the Poincar\'e duality groups of dimension $2$. It is well known that if $G$ is an infinite Demushkin group, then all finite index subgroups of $G$ are Demushkin and all infinite index subgroups of $G$ are free pro-$p$ (see \cite{Serre1}).

Next we discuss some other examples of $IF$-groups, which similarly as Demushkin groups, appear in number theory. Let $K$ be a discrete valuation field with perfect residue field $k$ of characteristic $p>0$, and let $K_{\textrm{sep}}$ be a separable closure of $K$. Denote by $K(p)$ the maximal $p$-extension of $K$ in $K_{\textrm{sep}}$ and let $\Gamma(p)=Gal(K(p)/K)$. When $v_0>-1$ and $\Gamma(p)^{(v_0)}$ is the ramification subgroup of $\Gamma(p)$ in upper numbering (cf. \cite{Serre2}, Ch. III), Abrashkin proved that any closed but not open finitely generated subgroup of the quotient $\Gamma(p)/\Gamma(p)^{(v_0)}$ is a free pro-$p$ group \cite{Abrashkin}. Hence, according to our definition, it is an $IF$-group. If $-1<v_0\leq 1$, then $\Gamma(p)/\Gamma(p)^{(v_0)}$ coincides with the Galois group of the maximal $p$-extension of the residue field $k$, and thus it is a free pro-$p$ group. If $v_0>1$ then $\Gamma(p)/\Gamma(p)^{(v_0)}$ is far from being a free pro-$p$ group. If $k$ is infinite, then it is not finitely generated, and if $k$ is finite, then it is finitely generated but the number of its relations is infinite (cf. \cite{Gordeev}). Thus if $v_0>1$, then $\Gamma(p)/\Gamma(p)^{(v_0)}$ is an $IF$-group which is neither free pro-$p$ nor Demushkin.

\begin{thm}\label{Demushkinnormalizer}
Let $G$ be a finitely presented $IF$-group with an open subgroup of deficiency greater than 1. If $H$ is a finitely generated subgroup of $G$ that contains a non-trivial normal subgroup of $G$, then $H$ has finite index in $G$.
\end{thm}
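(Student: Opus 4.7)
The plan is to argue by contradiction: after reducing to the case where $G$ itself has deficiency greater than $1$, combine the $IF$-hypothesis with a cohomological Bieri-type estimate to rule out a non-trivial normal subgroup living in a finitely generated subgroup of infinite index.

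I would begin by showing that $G$ is torsion-free: $G$ is infinite, since its open subgroup of deficiency greater than $1$ has minimal number of generators at least $2$, so any non-trivial torsion element of $G$ would generate a finite finitely generated subgroup of infinite index, which by the $IF$-hypothesis would have to be free pro-$p$ and hence trivial, a contradiction. Next, letting $U$ denote the given open subgroup of $G$ with $\textrm{def}(U) > 1$, I would reduce to the case $\textrm{def}(G) > 1$ by replacing $G$, $H$, $N$ with $U$, $H \cap U$, $N \cap U$. The group $U$ is again an $IF$-group (any finitely generated subgroup of $U$ of infinite index in $U$ has infinite index in $G$), the intersection $N \cap U$ is a non-trivial normal subgroup of $U$ (having finite index in the infinite group $N$), and the identity
\begin{displaymath}
[G:H] = [G:HU]\cdot[U:H\cap U],\qquad [G:HU]\leq[G:U]<\infty,
\end{displaymath}
shows that $[G:H]<\infty$ if and only if $[U:H\cap U]<\infty$. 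From now on we assume $\textrm{def}(G)>1$.

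Suppose, for contradiction, that $[G:H]=\infty$. Then the $IF$-hypothesis forces $H$ to be free pro-$p$, and the closed subgroup $N$ of $H$ is also free pro-$p$. Since $N$ is normal in the free pro-$p$ group $H$, the pro-$p$ analogue of the Marshall Hall theorem yields a dichotomy: either $N$ is finitely generated and of finite index in $H$, or $N$ has infinite index in $H$ and is not finitely generated. In the first subcase, $N$ is a non-trivial finitely generated normal subgroup of $G$ with $[G:N] \geq [G:H] = \infty$, and I would derive a contradiction through a pro-$p$ Bieri-type argument. Namely, I would apply the Lyndon--Hochschild--Serre spectral sequence to $1 \to N \to G \to G/N \to 1$ with $\mathbb{F}_p$-coefficients, exploit that $H^q(N,\mathbb{F}_p)=0$ for $q\geq 2$ (since $N$ is free pro-$p$), and use the finite-dimensionality of $H^1(N,\mathbb{F}_p)$ together with the infinitude of the pro-$p$ quotient $G/N$ to bound $\textrm{def}(G)$ above by $1$, contradicting $\textrm{def}(G)>1$.

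In the remaining subcase, $N$ is not finitely generated and has infinite index in $H$. Here I would first try to reduce to the previous subcase by extracting a non-trivial finitely generated closed subgroup $M$ of $N$ that is normal in $G$. A natural candidate is the $G$-normal closure of a carefully chosen element $n \in N$: all its $G$-conjugates automatically lie inside the finitely generated free pro-$p$ group $H$, and one hopes to show that for a suitable $n$ this closure is itself finitely generated, perhaps by first analysing the $H$-normal closure of $n$ inside $H$ via Marshall Hall. I expect this step to be the main obstacle: a priori it is not clear that any non-trivial $G$-normal subgroup of $G$ contained in $N$ is finitely generated. If no such subgroup exists, the alternative is to run the spectral-sequence deficiency argument of the previous paragraph directly for the infinitely generated $N$, carefully analysing the $\mathbb{F}_p[[G/N]]$-module structure of $H^1(N,\mathbb{F}_p)$ and the transgression $d_2 \colon H^1(G/N, H^1(N,\mathbb{F}_p)) \to H^3(G/N, \mathbb{F}_p)$ to again bound $\textrm{def}(G)$ above by $1$ and reach the desired contradiction.
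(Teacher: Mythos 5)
Your reduction to the case $\textrm{def}(G)>1$ and your observation that $H$ must be free pro-$p$ match the paper, and your first subcase (the normal subgroup $N$ finitely generated, hence of finite index in $H$) can indeed be closed by the Hillman--Schmidt spectral-sequence bound, which the paper itself cites. But the second subcase is a genuine gap, and you have correctly identified it as such: when $N$ is infinitely generated there is no reason for any non-trivial $G$-normal subgroup contained in $N$ to be finitely generated (the $G$-normal closure of an element $n\in N$ is again a normal subgroup of the free pro-$p$ group $H$, so if it has infinite index in $H$ it is itself infinitely generated and you are back where you started), and the Lyndon--Hochschild--Serre argument collapses once $H^1(N,\mathbb{F}_p)$ is infinite-dimensional. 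Neither of your proposed fallbacks can be made to work as stated, so the proof is incomplete precisely where you feared.

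The paper avoids analysing $N$ altogether. The normal subgroup is used only through Greenberg's theorem for free pro-$p$ groups (Proposition 3.3 in \cite{LubotzkyA1}): for \emph{every} finitely generated $M$ with $H\leq M\leq G$ and $|G:M|=\infty$, the group $M$ is free pro-$p$ and contains the non-trivial normal subgroup $K$ of $G$ inside its finitely generated subgroup $H$, whence $|M:H|<\infty$; this requires no hypothesis on $K$ being finitely generated. Multiplicativity of $\chi$ then gives the uniform bound $|M:H|\leq -\chi(H)$, so Zorn's lemma produces a maximal such $M=N_0$. Writing $N_0=\bigcap_i W_i$ with $W_i=\langle N_0,w_i\rangle$ for suitable $w_i$ outside $N_0$, maximality forces each $W_i$ to have finite index, and the partial Euler characteristic inequality $\textrm{def}(W_i)-1\geq |G:W_i|(\textrm{def}(G)-1)$ bounds $|G:W_i|$ by $(d(N_0)+1)/(\textrm{def}(G)-1)$ independently of $i$; hence only finitely many $W_i$ occur and $N_0$ itself has finite index, a contradiction. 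If you want to salvage your write-up, replace your case division on $N$ by this use of Greenberg's theorem on intermediate subgroups; your spectral-sequence computation is then only needed (in the guise of the deficiency inequality from Lemma 3.3.15 in \cite{Wingberg}) for the finite-index subgroups $W_i$, where it is unproblematic.
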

\begin{proof}
Let $H$ be a finitely generated subgroup of $G$ that contains a non-trivial normal subgroup $K$ of $G$. Let $G_1$ be an open subgroup of $G$ such that $\textrm{def}(G_1)\geq 2$. Then $H\cap G_1$ is a finitely generated subgroup of $G_1$ and $K\cap G_1$ is a non-trivial normal subgroup of $G_1$ that is contained in $H\cap G_1$. Note that  $H\cap G_1$ has finite index in $G_1$ if and only if $H$ has finite index in $G$. Thus, without loss of generality, we can assume that $\textrm{def}(G)\geq 2$.

Now we use an idea of D. Kochloukova in \cite{Kochloukova3}, where the theorem is proved for Demushkin groups with $\chi(G)\neq 0$.
Suppose that $|G:H|=\infty$. Firstly we show that $H$ cannot be procyclic. Indeed, if $H\cong \mathbb{Z}_p$, then we must have $K\cong \mathbb{Z}_p$, and by Theorem 3 in \cite{Hillman}, it follows that $\textrm{def}(G)\leq 1$. Thus $H$ is a non-abelian free pro-$p$ group.
Note that $\chi(H)=1-d(H)\leq {-1}$ and consider the set
\begin{displaymath}
\mathcal{T}(H)=\{U ~ | ~ H\leq U \leq G, U \textrm{ is finitely generated } \textrm{ and } |G:U|=\infty \}.
\end{displaymath}
Let $M \in \mathcal{T}(H)$. Then $M$ is a finitely generated non-abelian free pro-$p$ group and $H$ is a finitely generated subgroup of $M$ that contains the normal subgroup $K$ of $G$. By Proposition 3.3 in \cite{LubotzkyA1}, it follows that $|M:H| < \infty$. Thus $M$ is finitely generated and $\chi(M)=1-d(M)\leq {-1}$. From the multiplicativity of the Euler-Poincar\'e characteristic on finite index subgroups we have
\begin{displaymath}
\chi(H)=|M:H|\chi(M).
\end{displaymath}
Since $-\chi(M)\geq 1$, it follows that
\begin{displaymath}
|M:H|=\frac{\chi(H)}{\chi(M)}\leq -\chi(H).
\end{displaymath}
Thus there is an upper bound for $|M:H|$. This implies that every ascending chain of elements in $\mathcal{T}(H)$ has an upper bound. By Zorn's lemma it follows that $\mathcal{T}(H)$ has a maximal element.

 Let $N$ be a maximal element of  $\mathcal{T}(H)$. Since $N$ is a closed subgroup of $G$, we have
\begin{displaymath}
N=\cap\{V ~ | ~ N \leq V \leq_o G   \}.
\end{displaymath}
Moreover, since $N$ has infinite index in $G$, there is a sequence $V_1\geq V_2 \geq\cdots V_1\geq V_{i+1} \geq \cdots $ of open subgroups in $G$ such that $N=\cap_{i\geq 1} V_i$.

For each $i\geq 1$, choose $w_i \in V_i \backslash N$ and set $W_i=\langle N, w_i \rangle$. Then we have
$N=\cap_{i\geq 1} W_i$.
Note that there is no $i\geq 1$ with $|G: W_i|=\infty$, because otherwise it would contradict the maximality of $N$ in $\mathcal{T}(H)$. Hence $|G: W_i|<\infty$ for all $i\geq 1$. Since $G$ is finitely presented, we have that $\chi_2(G)$ and $\chi_2(W_i)$ are well defined, where $\chi_2(G)$ is the second partial Euler-Poincar\'e characteristic of $G$. By Lemma 3.3.15 in \cite{Wingberg} we have $\chi_2(W_i)\leq |G:W_i|\chi_2(G)$, which implies that $\textrm{def}(W_i)-1 \geq |G:W_i| (\textrm{def}(G)-1).$
Thus we have
\begin{displaymath}
|G:W_i|\leq \frac{\textrm{def}(W_i)-1}{\textrm{def}(G)-1}\leq \frac{d(W_i)}{\textrm{def}(G)-1}\leq \frac{d(N)+1}{\textrm{def}(G)-1}.
\end{displaymath}
Hence the index $|G:W_i|$ has an upper bound that does not depend on $i$. This implies that there are only finitely many possibilities for $W_i$. Hence, $N=\cap_{i\geq 1} W_i $ has finite index in $G$, a contradiction. This finishes the proof.
\end{proof}
\begin{rem*}
Note that the result in the above theorem in general is not valid if we do not assume that $G$ has an open subgroup of deficiency greater than 1. For instance, if $G$ is an infinite solvable Demushkin group, then every open subgroup of $G$ has deficiency $1$ and $G$ has a normal subgroup of infinite index isomorphic to $\mathbb{Z}_p$.
\end{rem*}
As an immediate consequence of Theorem \ref{Demushkinnormalizer} we get the following.
\begin{cor}
Let $G$ be a finitely presented $IF$-group with an open subgroup of deficiency greater than 1. If $H$ is a non-trivial finitely generated normal subgroup of $G$, then $H$ has finite index in $G$.
\end{cor}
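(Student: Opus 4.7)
The plan is extremely short, because the corollary is a direct specialization of Theorem \ref{Demushkinnormalizer}. In that theorem the hypothesis on $H$ is that $H$ is a finitely generated subgroup of $G$ which \emph{contains} some non-trivial normal subgroup of $G$. In the corollary we are assuming the stronger hypothesis that $H$ is itself a non-trivial finitely generated normal subgroup of $G$. So I would simply take the normal subgroup witnessing the hypothesis of the theorem to be $H$ itself: $H$ is finitely generated, $H\leq H$, and $H$ is a non-trivial normal subgroup of $G$.

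Applying Theorem \ref{Demushkinnormalizer} with this choice yields $|G:H|<\infty$, which is exactly the conclusion of the corollary. Since $G$ is finitely presented as an $IF$-group and possesses an open subgroup of deficiency greater than $1$, all hypotheses of Theorem \ref{Demushkinnormalizer} are in force, and no further argument is needed.

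There is no real obstacle here; the only minor point worth noting is that the corollary genuinely requires the deficiency hypothesis. As the remark preceding the corollary points out, an infinite solvable Demushkin group $G$ has all open subgroups of deficiency $1$ and admits a non-trivial infinite-index normal subgroup isomorphic to $\mathbb{Z}_p$, showing that the corollary would fail without this assumption and so the reduction to Theorem \ref{Demushkinnormalizer} is using that hypothesis in an essential way.
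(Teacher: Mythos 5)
Your proposal is correct and matches the paper exactly: the paper presents this corollary as an immediate consequence of Theorem \ref{Demushkinnormalizer}, obtained precisely by taking the normal subgroup contained in $H$ to be $H$ itself. Nothing further is needed.
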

\begin{rem*}
The above corollary is just a special case of Theorem 3 in \cite{Hillman}. Moreover, note that the above result is true for any free pro-$p$ group, not only for finitely generated free pro-$p$ groups. Indeed, if $G$ is a free pro-$p$ group and $H$ is a finitely generated subgroup of $G$ of infinite index, then one can easily find a finitely generated subgroup $G'$ of $G$ such that $H\leq G'$ and $H$ has infinite index in $G'$; this is impossible by the above corollary.
\end{rem*}
\begin{cor}\label{DemNorm}
Let $G$ be a finitely presented pro-$p$ group with an open subgroup of deficiency greater than 1. Suppose that all infinite index subgroups of $G$ are free pro-$p$ groups. Then any non-trivial finitely generated subgroup $H$ of $G$ has finite index in its normalizer in $G$.
\end{cor}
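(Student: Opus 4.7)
The plan is a two-case analysis on the index of $N=N_G(H)$ in $G$. Note first that, since $H$ is closed in the profinite group $G$, the normalizer $N$ is automatically closed, so $|G:N|$ makes sense as a (super)natural number. We will reduce each case to a statement already established in this section.

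First I would verify that the hypotheses on $G$ pass to every open subgroup $M\leq G$. Any such $M$ is finitely presented, and if $K\leq M$ has infinite index in $M$ then $|G:K|=|G:M|\cdot|M:K|=\infty$, so $K$ is free pro-$p$ by hypothesis; hence $M$ is an $IF$-group. Moreover, if $U\leq G$ is open with $\textrm{def}(U)\geq 2$, the inequality $\chi_2(M\cap U)\leq |U:M\cap U|\,\chi_2(U)$ (Lemma 3.3.15 in \cite{Wingberg}) used in the proof of Theorem \ref{Demushkinnormalizer} yields $\textrm{def}(M\cap U)-1\geq |U:M\cap U|(\textrm{def}(U)-1)>0$, so $M\cap U$ is an open subgroup of $M$ of deficiency $>1$. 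Thus $M$ inherits all three hypotheses of Theorem \ref{Demushkinnormalizer}.

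\textbf{Case 1:} $|G:N|<\infty$. Then $N$ is open in $G$, and by the previous paragraph satisfies the hypotheses of Theorem \ref{Demushkinnormalizer}. Since $H\trianglelefteq N$ is a non-trivial finitely generated (in particular normal) subgroup of $N$, applying that theorem with $N$ in place of $G$ and $H$ in place of $H$ (using the Corollary immediately following Theorem \ref{Demushkinnormalizer}, which is the normal-subgroup version we need) gives $|N:H|<\infty$, as required.

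\textbf{Case 2:} $|G:N|=\infty$. By hypothesis $N$ is then a free pro-$p$ group, and $H$ is a non-trivial finitely generated normal subgroup of $N$. By the remark following the Corollary after Theorem \ref{Demushkinnormalizer} (the extension of Hillman's theorem to arbitrary, not necessarily finitely generated, free pro-$p$ groups), such an $H$ must have finite index in $N$, so $|N:H|<\infty$ again.

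No real obstacle is expected: the heavy lifting has all been done in Theorem \ref{Demushkinnormalizer} and the remark on free pro-$p$ groups, and the only slightly delicate point is checking that openness of $N$ preserves the deficiency and $IF$ conditions, which is routine given the inequalities already invoked in this section.
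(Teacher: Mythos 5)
Your proof is correct and follows essentially the same route as the paper: the same two-case split on $|G:N_G(H)|$, with the infinite-index case handled by the remark on finitely generated normal subgroups of free pro-$p$ groups and the finite-index case by applying the corollary to Theorem \ref{Demushkinnormalizer} to the open subgroup $N_G(H)$. The only difference is that you spell out the (routine) verification that the $IF$ and deficiency hypotheses pass to open subgroups, which the paper leaves implicit.
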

\begin{proof}
Let $H$ be a finitely generated subgroup of $G$. If $N_G(H)$ is of infinite index in $G$, then it is a free pro-$p$ group. Since $H$ is a finitely generated normal subgroup of the free pro-$p$ group $N_G(H)$, by the above remark, it must be of finite index in $N_G(H)$. Now suppose that $N_G(H)$ has finite index in $G$. Let $K$ be an open subgroup of $G$ such that $\textrm{def}(K)\geq 2$. Then $K\cap N_G(H)$ is an open subgroup of $K$ and
\begin{displaymath}
\textrm{def}(K\cap N_G(H))-1 \geq |K:K\cap N_G(H)|(\textrm{def}(K)-1)\geq 1.
\end{displaymath}
Since $K\cap N_G(H)$ has finite index in $N_G(H)$, one can apply the above corollary and obtain that $H$ has finite index in $N_G(H)$.
\end{proof}
Note that Corollary \ref{DemNorm} is an analogue of Theorem 6.7 in \cite{Kochloukova2}.

\begin{thm}\label{mainDem}
Let $G$ be a finitely presented $IF$-group with an open subgroup of deficiency greater than 1. Then
\begin{itemize}
\item[(1)] [Greenberg-Stallings Property] If $H$ and $K$ are finitely generated subgroups of $G$
with the property that $H\cap K$ has finite index in both $H$ and
$K$, then $H\cap K$ has finite index in $\langle H, K \rangle$; \item[(2)] If $H$ is a finitely generated subgroup of
$G$, then $H$ has a root in $G$; \item[(3)] Suppose in addition that all infinite index subgroups of $G$ are free pro-$p$ groups. Then $|\textrm{Comm}_G(H):H|<\infty$ for any non-trivial finitely generated subgroup $H$ of $G$.
\end{itemize}
\end{thm}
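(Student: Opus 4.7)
\emph{Proof proposal.} I would treat the three parts in order, using each to power the next. A useful preliminary observation is that $G$ is torsion-free: any finite subgroup is a finitely generated subgroup of infinite index (as $G$ is infinite), hence free pro-$p$ by the $IF$-hypothesis, hence trivial.

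For (1), if $H\cap K = 1$ then both $H$ and $K$ are finite, hence trivial, and the conclusion is vacuous. Otherwise Lemma~\ref{nearnormal} yields an open, non-trivial, finitely generated subgroup $U$ of $H\cap K$ that is normal in $L := \langle H,K\rangle$. If $[G:L] = \infty$ then $L$ itself is free pro-$p$, and the remark following Corollary~\ref{DemNorm} forces the finitely generated normal subgroup $U \trianglelefteq L$ to have finite index in $L$, so $H\cap K$ does too. If $[G:L] < \infty$, then $L$ is an open subgroup of $G$ satisfying all the standing hypotheses---finitely presented, $IF$, and (via Lemma~3.3.15 of \cite{Wingberg}, exactly as in the proof of Theorem~\ref{Demushkinnormalizer}) possessing an open subgroup of deficiency $\geq 2$---so applying Theorem~\ref{Demushkinnormalizer} to $L$ with $U$ as the normal subgroup and $H\cap K$ as the ambient finitely generated subgroup finishes the case.

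For (2), the case $[G:H] < \infty$ is trivial with $\textrm{root}_G(H) = G$, so I may assume $H$ has infinite index and is therefore free pro-$p$. Let $\mathcal{S}(H)$ be the set of finitely generated overgroups $K$ of $H$ in $G$ with $[K:H] < \infty$; each such $K$ is again free pro-$p$, and Nielsen-Schreier gives $d(H) - 1 = [K:H](d(K) - 1)$. When $d(H) \geq 2$ this bounds $[K:H] \leq d(H) - 1$, so a $K$ with $[K:H]$ maximal works: for any $K' \in \mathcal{S}(H)$, part (1) yields $\langle K,K'\rangle \in \mathcal{S}(H)$, and maximality forces $K' \leq K$. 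When $d(H) = 0$, torsion-freeness gives $\textrm{root}_G(H) = H = 1$. The case $d(H) = 1$ is the main obstacle, since Nielsen-Schreier no longer bounds $[K:H]$: every $K \in \mathcal{S}(H)$ is procyclic, and a priori infinitely many could occur. Here I would use (1) to show $\langle K,K' \rangle \in \mathcal{S}(H)$, hence procyclic, so $K$ and $K'$ are comparable and $\mathcal{S}(H)$ is totally ordered. If $\mathcal{S}(H)$ were infinite it would contain a strictly ascending chain $K_1 < K_2 < \cdots$ of procyclic subgroups in which every element of $K_i$ is a $p$-th power in $K_{i+1}$. Letting $L := \overline{\bigcup_i K_i}$, the map $x \mapsto px$ is continuous on compact $L$, so $pL$ is closed; since $pL$ contains the dense subgroup $\bigcup_i K_i$ we get $L = pL$, whence the Frattini quotient $L/\Phi(L) = L/pL$ vanishes, forcing $L = 1$. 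This contradicts $H \leq L$, so $\mathcal{S}(H)$ is finite and its maximum is the root.

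For (3), I would set $R := \textrm{root}_G(H)$; the case $[G:H]<\infty$ is trivial, so assume $R$ has infinite index in $G$. Under the stronger hypothesis $R$ is free pro-$p$, and an immediate check shows $\textrm{root}_G(R) = R$ (any finite-index extension of $R$ is a finite-index extension of $H$, hence already in $R$). Corollary~\ref{DemNorm} then gives $[N_G(R):R] < \infty$, so $N_G(R) \leq \textrm{root}_G(R) = R$, whence $N_G(R) = R$. For $g \in \textrm{Comm}_G(R)$, $R \cap gRg^{-1}$ has finite index in both $R$ and $gRg^{-1}$, and invariance of $\textrm{root}_G$ under finite-index extensions gives $R = \textrm{root}_G(R \cap gRg^{-1}) = gRg^{-1}$, i.e., $g \in N_G(R) = R$. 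Proposition~\ref{commensurator} finally yields $\textrm{Comm}_G(H) = \textrm{Comm}_G(R) = R$, and $[R:H] < \infty$ finishes the proof.
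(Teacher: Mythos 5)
Your proof is correct and follows essentially the same route as the paper: Lemma~\ref{nearnormal} plus Theorem~\ref{Demushkinnormalizer} for (1), a rank/Euler-characteristic bound on $|K:H|$ combined with part (1) for (2), and the root-plus-normalizer argument of Theorem~\ref{main}(3) together with Corollary~\ref{DemNorm} for (3). Your only departures are minor but welcome: you verify explicitly that $\langle H,K\rangle$ satisfies the hypotheses of Theorem~\ref{Demushkinnormalizer} (splitting into the infinite-index case, where the free pro-$p$ remark applies, and the finite-index case, where the hypotheses are inherited -- the paper invokes the theorem without comment), and you settle the procyclic case of (2) by a divisibility/Frattini-quotient contradiction where the paper instead observes that the closed abelian subgroup generated by an ascending chain is procyclic, so the chain is bounded and Zorn's lemma applies.
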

\begin{proof}
(1)
 Let $H$ and $K$ be finitely generated subgroups of $G$ with the property that $H\cap K$ has finite index in both $H$ and $K$. Note that if $\langle H, K \rangle$ is abelian then the result
follows from the structure theorem of the torsion free finitely
generated abelian pro-$p$ groups. Thus we can assume that $\langle H, K \rangle$ is not abelian. By Lemma \ref{nearnormal}, there exists a  finitely
generated open subgroup $U$ of $H\cap K$ that is normal in $\langle H, K
\rangle$. Hence, by Theorem \ref{Demushkinnormalizer}, we have $|\langle H, K \rangle :
U|< \infty$. This implies that $|\langle H, K \rangle :
H\cap K|<\infty$.

\smallskip

(2)
Let $H$ be a finitely generated subgroup of $G$. If $H$ has finite index in $G$, then $\textrm{root}_G(H)=G$. Therefore, we may assume that $H$ has infinite index in $G$. Consider the set
\begin{displaymath}
\mathcal{R}(H)=\{ K ~ | ~ H \leq K \leq G \textrm{ and } |K:H|< \infty \}.
\end{displaymath}
It is easy to see that $H$ has a root in $G$ if and only if the greatest element of $\mathcal{R}(H)$ exists. Thus it suffices to show the existence of the greatest element of $\mathcal{R}(H)$.

Firstly we consider the case when $H$ is not procyclic. Since $H$ is a finitely generated non-abelian free pro-$p$ group, we have $\chi(H)=1-d(H)\leq {-1}$. Let $K\in \mathcal{R}(H)$. Since $|K:H| < \infty$, it follows that $K$ is also a finitely generated non-abelian free pro-$p$ group, and $\chi(K)=1-d(K)\leq {-1}$. From the multiplicativity of the Euler-Poincar\'e characteristic on finite index subgroups we have
\begin{displaymath}
\chi(H)=|K:H|\chi(K).
\end{displaymath}
Since $-\chi(K)\geq 1$, it follows that
\begin{displaymath}
|K:H|=\frac{\chi(H)}{\chi(K)}\leq -\chi(H).
\end{displaymath}
Thus there is an upper bound for $|K:H|$. This implies that every ascending chain of elements in $\mathcal{R}(H)$ has an upper bound. By Zorn's lemma it follows that $\mathcal{R}(H)$ has a maximal element.

Next, suppose that $H\cong \mathbb{Z}_p$ and let $H_1\leq H_2 \leq \cdots$ be an ascending chain of elements in $\mathcal{R}(H)$. Let $L=\langle \cup_{i\geq 1}H_i \rangle$. Then $L$ is a closed abelian subgroup of $G$, so we must have $L\cong \mathbb{Z}_p$ (because $G$ is a finitely presented $IF$-group with an open subgroup of deficiency greater than 1). Since the only closed subgroup of infinite index in $\mathbb{Z}_p$ is the trivial one, we have $|L:H|<\infty$.
 Thus every ascending chain in $\mathcal{R}(H)$ has an upper bound. By Zorn's lemma it follows that $\mathcal{R}(H)$ has a maximal element.

 Now let $N$ be a maximal element of $\mathcal{R}(H)$. We claim that $N$ is the greatest element of $\mathcal{R}(H)$. Suppose this is not true. Then there is $A \in \mathcal{R}(H)$ such that $A\nleq N$, and so, by the Greenberg-Stallings property we have $|\langle N, A \rangle : H | < \infty$. Thus $\langle N, A \rangle$ is an element of $\mathcal{R}(H)$ which properly contains $N$. Hence $N$ is not a maximal element of $\mathcal{R}(H)$, which is a contradiction.

 \smallskip

 (3) Let $H$ be a finitely generated subgroup of $G$. Then $|N_G(H):H| < \infty$, by Corollary \ref{DemNorm}. Now if we proceed as in the proof of part (3) of Theorem \ref{main}, we get $|\textrm{Comm}_G(H):H|<\infty$.
 \end{proof}

 By part (2) of the above theorem and Corollary \ref{DemNorm} we have the following.
\begin{cor}
Let $G$ be a finitely presented pro-$p$ group with an open subgroup of deficiency greater than 1. Suppose that all infinite index subgroups of $G$ are free pro-$p$ groups.  Then the normalizer tower in $G$ of any non-trivial finitely generated subgroup  $H$ of $G$ stabilizes after finitely many steps, i.e., it has finite length.
\end{cor}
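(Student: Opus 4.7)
The plan is to combine the existence of roots (Theorem \ref{mainDem}(2)) with the fact that any finitely generated subgroup has finite index in its normalizer (Corollary \ref{DemNorm}), and then use a simple cardinality argument.

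First I would show by induction on $\alpha$ that every term $N^\alpha_G(H)$ of the normalizer tower is finitely generated and contained in $\mathrm{root}_G(H)$. The base case $N^0_G(H)=H$ is clear, and $H\subseteq \mathrm{root}_G(H)$ by definition. For the successor step, if $N^\alpha_G(H)$ is finitely generated and contains $H$ with finite index, then by Corollary \ref{DemNorm}, $N^\alpha_G(H)$ has finite index in $N^{\alpha+1}_G(H):=N_G(N^\alpha_G(H))$; hence $N^{\alpha+1}_G(H)$ is also finitely generated, and $|N^{\alpha+1}_G(H):H|<\infty$, so $N^{\alpha+1}_G(H)\leq \mathrm{root}_G(H)$ by the defining property of the root.

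Next I would observe that $\mathrm{root}_G(H)$ itself contains $H$ with finite index, and therefore the interval $\{K : H\leq K \leq \mathrm{root}_G(H)\}$ is finite; indeed, it corresponds bijectively to a set of subgroups in the finite quotient $\mathrm{root}_G(H)/H_0$, where $H_0$ is the (finite-index) normal core of $H$ in $\mathrm{root}_G(H)$. Since the tower $H=N^0_G(H)\leq N^1_G(H)\leq N^2_G(H)\leq\cdots$ is an ascending chain inside this finite interval, it must stabilize after finitely many steps. In particular no limit ordinals are ever reached, so there is no issue with limit stages.

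No step here is really an obstacle: the heavy lifting has already been done in Theorem \ref{mainDem}(2) (to produce $\mathrm{root}_G(H)$) and in Corollary \ref{DemNorm} (to guarantee $|N_G(K):K|<\infty$ for every finitely generated $K$). The only thing to be careful about is to maintain the inductive hypothesis that each $N^\alpha_G(H)$ is finitely generated, which is needed in order to reapply Corollary \ref{DemNorm} at the next step; but this is automatic once we know each $N^\alpha_G(H)$ has finite index in the finitely generated group $\mathrm{root}_G(H)$.
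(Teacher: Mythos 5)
Your argument is correct and follows essentially the same route the paper intends: the paper derives this corollary directly from Theorem \ref{mainDem}(2) (existence of $\mathrm{root}_G(H)$) together with Corollary \ref{DemNorm} ($|N_G(K):K|<\infty$ for finitely generated $K$), which is exactly the combination you use. Your write-up merely makes explicit the routine details (each tower term is finitely generated, of finite index over $H$, hence trapped in the finite interval between $H$ and its root), all of which check out.
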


\begin{pro}
Let $G$ be a finitely presented pro-$p$ group with an open subgroup of deficiency greater than 1. Suppose that all infinite index subgroups of $G$ are free pro-$p$ groups. Then for a non-trivial finitely generated subgroup $H$ of $G$ we have $\textrm{Comm}_G(H)=\textrm{root}_G(H)$. In particular, the group $H$ has finite index in $\textrm{Comm}_G(H)$.
\end{pro}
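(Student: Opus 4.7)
The plan is to mimic the proof of Proposition~\ref{comindex}, which reduced the analogous statement for $\mathcal{L}$-groups to Theorem~\ref{main}(3). All the ingredients required for the reduction have already been assembled in this section, namely: the existence of roots (Theorem~\ref{mainDem}(2)), the Greenberg-Stallings property (Theorem~\ref{mainDem}(1)), and finite index of $H$ in $N_G(H)$ (Corollary~\ref{DemNorm}). The containment $H\le N_G(H)\le \mathrm{Comm}_G(H)$ is automatic, so the entire content is showing $\mathrm{Comm}_G(H)\subseteq \mathrm{root}_G(H)$.

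First I would reduce to the self-rooted case. Theorem~\ref{mainDem}(2) guarantees that $R:=\mathrm{root}_G(H)$ exists, and by construction $R$ is itself self-rooted (any finite-index supergroup of $R$ is also a finite-index supergroup of $H$, hence sits inside $R$). Since $|R:H|<\infty$, Proposition~\ref{commensurator} yields $\mathrm{Comm}_G(H)=\mathrm{Comm}_G(R)$. Therefore it suffices to prove that whenever $H$ satisfies $H=\mathrm{root}_G(H)$, one has $\mathrm{Comm}_G(H)=H$.

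Assuming now $H=\mathrm{root}_G(H)$, Corollary~\ref{DemNorm} gives $|N_G(H):H|<\infty$; by the root property $N_G(H)\subseteq \mathrm{root}_G(H)=H$, so $N_G(H)=H$. Fix $g\in \mathrm{Comm}_G(H)$. Then $H\cap gHg^{-1}$ has finite index in both $H$ and $gHg^{-1}$, so by Theorem~\ref{mainDem}(1) it has finite index in $\langle H,gHg^{-1}\rangle$. In particular $\langle H,gHg^{-1}\rangle$ is a finite-index supergroup of $H$, and the definition of root forces $\langle H,gHg^{-1}\rangle\subseteq \mathrm{root}_G(H)=H$. Hence $gHg^{-1}\subseteq H$. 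Applying the identical argument to $g^{-1}\in\mathrm{Comm}_G(H)$ yields $g^{-1}Hg\subseteq H$, so $gHg^{-1}=H$ and $g\in N_G(H)=H$. This proves $\mathrm{Comm}_G(H)\subseteq H$, completing the equality $\mathrm{Comm}_G(H)=\mathrm{root}_G(H)$, from which the second assertion follows by Theorem~\ref{mainDem}(2).

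I do not anticipate any serious obstacle: every step is an almost verbatim transfer of the argument used for $\mathcal{L}$-groups, with Theorem~6.5 and Theorem~6.7 of \cite{Kochloukova2} replaced by Theorem~\ref{Demushkinnormalizer} and Corollary~\ref{DemNorm} respectively. The only mild point requiring care is the verification that the root of a self-rooted subgroup is itself, and that conjugation preserves the root (used implicitly when applying the argument to $g^{-1}$); both are immediate from the definition.
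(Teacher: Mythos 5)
Your proposal is correct and follows essentially the same route as the paper, which proves this proposition by citing the argument of Theorem \ref{main}(3) with Theorem \ref{mainDem} and Corollary \ref{DemNorm} supplying the ingredients in place of Theorems 6.5 and 6.7 of \cite{Kochloukova2}. Your handling of the final step (applying the argument to both $g$ and $g^{-1}$ to get $gHg^{-1}=H$) is a minor, and if anything cleaner, variant of the paper's concluding contradiction.
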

\begin{proof}
Same as the proof of Proposition \ref{comindex}.
\end{proof}
The following result is an analogue of Corollary \ref{cor1}.
\begin{cor}
Let $G$ be a finitely presented $IF$-group with an open subgroup of deficiency greater than 1. Then for any non-trivial finitely generated subgroup $H$ of $G$ with $d(H)$ not congruent to $1$ modulo $p$ we have
 \begin{displaymath}
 H=\textrm{root}_G(H).
 \end{displaymath}
If in addition we suppose that all infinite index subgroups of $G$ are free pro-$p$ groups, then
 \begin{displaymath}
 H=N_G(H)=\textrm{root}_G(H)=\textrm{Comm}_G(H).
 \end{displaymath}
\end{cor}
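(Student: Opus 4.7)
The plan is to mimic the argument of Corollary \ref{cor1}, with the $IF$-hypothesis replacing the use of Serre's theorem on torsion-free virtually free pro-$p$ groups. Write $R=\textrm{root}_G(H)$, which exists by Theorem \ref{mainDem}(2), and suppose for contradiction that $H\subsetneq R$. Since $R$ is a pro-$p$ group and $H$ is a proper subgroup of finite index in $R$, the quotient $R/\textrm{Core}_R(H)$ is a finite $p$-group in which the image of $H$ is proper; standard $p$-group theory then produces a subgroup of index exactly $p$ strictly containing this image, and pulling back yields $K$ with $H\leq K\leq R$ and $|K:H|=p$. The case $H=G$ is immediate, so assume $H$ has infinite index in $G$; then $K$ also has infinite index in $G$, and both $H$ and $K$ are free pro-$p$ by the $IF$-hypothesis. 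The Schreier formula gives $d(H)-1=p(d(K)-1)$, forcing $d(H)\equiv 1\pmod{p}$ and contradicting the hypothesis. Hence $R=H$.

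For the second statement, assume additionally that every infinite index subgroup of $G$ is free pro-$p$. Then Corollary \ref{DemNorm} applies to $H$ and gives $|N_G(H):H|<\infty$; since $N_G(H)$ contains $H$ with finite index it is contained in $\textrm{root}_G(H)=H$, so $N_G(H)=H$. The inclusion $N_G(H)\leq \textrm{Comm}_G(H)$ is automatic, so it suffices to show the reverse. Let $g\in \textrm{Comm}_G(H)$; then $H\cap gHg^{-1}$ has finite index in both $H$ and $gHg^{-1}$, and directly from the definition of the root one has $\textrm{root}_G(H\cap gHg^{-1})=\textrm{root}_G(H)=\textrm{root}_G(gHg^{-1})$, each equal to $H$. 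Hence $gHg^{-1}\leq H$; applying the same reasoning to $g^{-1}$ gives $g^{-1}Hg\leq H$, equivalently $H\leq gHg^{-1}$, and combining yields $gHg^{-1}=H$. Therefore $g\in N_G(H)=H$, proving $\textrm{Comm}_G(H)=H$.

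The main obstacle is the production of the index-$p$ step $K$ strictly above $H$ inside $R$; beyond this the argument is essentially formal, as the Schreier formula for closed subgroups of free pro-$p$ groups closes out the first part in one line, and the second part amounts to Corollary \ref{DemNorm} together with the commensurator manipulation lifted from the proof of Theorem \ref{main}(3). The hypothesis $d(H)\not\equiv 1\pmod{p}$ is precisely what is needed to rule out an honest index-$p$ extension of $H$ inside $R$ via Schreier, so the whole argument pivots on that one numerical condition.
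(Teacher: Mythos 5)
Your argument is, in substance, exactly the paper's: the printed proof of this corollary is literally ``Same as the proof of Corollary \ref{cor1}'', and you reconstruct that proof faithfully --- existence of the root from Theorem \ref{mainDem}(2), an intermediate subgroup $K$ with $|K:H|=p$ inside the root, the $IF$-hypothesis supplying freeness of $H$ and $K$ where Corollary \ref{cor1} invoked Serre's theorem, the Schreier formula to contradict $d(H)\not\equiv 1\pmod p$, and then Corollary \ref{DemNorm} together with the commensurator manipulation from Theorem \ref{main}(3) for the second display. The second half of your proof is complete and correct.

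The one genuine gap is the sentence ``The case $H=G$ is immediate, so assume $H$ has infinite index in $G$.'' This silently discards the case $1<|G:H|<\infty$, and that case cannot be absorbed: there $\textrm{root}_G(H)=G\neq H$ by the remark following the definition of the root, and neither $H$ nor the intermediate $K$ need be free pro-$p$, since the $IF$-hypothesis says nothing about finite-index subgroups, so the Schreier step is unavailable. The case is not vacuous either: for a non-solvable Demushkin group $G$ and $H$ open of index $p$, multiplicativity of the Euler--Poincar\'e characteristic gives $d(H)=2+p(d(G)-2)\equiv 2\not\equiv 1\pmod p$, so such an $H$ satisfies all the stated hypotheses yet $\textrm{root}_G(H)=G$. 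In other words, the first displayed equality really requires the additional hypothesis that $H$ has infinite index in $G$ (or that $H$ is free pro-$p$, which is exactly what makes the argument of Corollary \ref{cor1} close). To be fair, the paper's one-line proof inherits the same defect, so your write-up is no worse than the source; but since you explicitly performed the case reduction, the missing intermediate case should be flagged rather than assumed away.
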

\begin{proof}
Same as the proof of Corollary \ref{cor1}.
\end{proof}
Recall that infinite Demushkin groups have positive deficiency. Moreover, if $G$ is an infinite Demushkin group then it is solvable if and only if $\textrm{def}(G)=1$. Thus non-solvable Demushkin groups have deficiency greater than $1$; hence all the results stated in this section hold for non-solvable Demushkin groups.

\smallskip

Let $\overline{G}$, $\mathcal{M}(G)$, $H^*$ and $\mathcal{L}(G)$ be defined as in Section 4. We have the following.
\begin{pro}\label{lattice1}
Let $G$ be a finitely presented $IF$-group with an open subgroup of deficiency greater than 1.
\begin{itemize}
\item[a)] If $H$ is a self-rooted subgroup of $G$, then $H^*$ is a convex sublattice of $\overline{G}$ with greatest element $H$ and without a least element.
\item[b)] The set $\mathcal{L}(G)$ forms a partition of $\mathcal{M}(G)$, i.e., any two distinct elements in $\mathcal{L}(G)$ are disjoint and $\mathcal{M}(G)$ is equal to the union of all the elements in $\mathcal{L}(G)$.
\end{itemize}
\end{pro}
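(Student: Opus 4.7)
The plan is to mirror the proof of Proposition \ref{lattice}, with Theorem \ref{mainDem} playing the role of the corresponding results about pro-$p$ groups in the class $\mathcal{L}$; concretely, Theorem \ref{mainDem}(1)--(2) furnishes the Greenberg-Stallings property and the existence of roots that the argument needs. Throughout I would assume $H \neq 1$ (the case $H = 1$ gives the degenerate singleton $H^* = \{1\}$).

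\smallskip

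For part (a), I would fix a non-trivial self-rooted $H$ and verify the three assertions in turn. Closure of $H^*$ under the meet and join of $\overline{G}$ is immediate: for $U, V \in H^*$, both $U \cap V$ and $\langle U, V \rangle$ lie in $H$, with $|H:U \cap V| \leq |H:U|\cdot |H:V|$ and $\langle U, V \rangle$ containing the finite-index subgroup $U$. Convexity follows because any $W$ with $U \leq W \leq V$ for $U, V \in H^*$ sits in $H$ and contains $U$, hence has finite index in $H$. The greatest element is clearly $H$ itself. For the absence of a least element, I would use that every $U \in H^*$ is an infinite finitely generated pro-$p$ group and hence admits a proper open subgroup $U' \subsetneq U$ which remains in $H^*$.

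\smallskip

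For part (b) my plan has two steps. First, to show that every $H \in \mathcal{M}(G)$ lies in some member of $\mathcal{L}(G)$, I would set $R := \textrm{root}_G(H)$ (which exists by Theorem \ref{mainDem}(2)) and check that $R$ is itself self-rooted: any finite-index overgroup $M$ of $R$ is also a finite-index overgroup of $H$, hence $M \leq R$ by the maximality clause defining the root; moreover $R$ is finitely generated since $|R:H|<\infty$, so $R^* \in \mathcal{L}(G)$ and $H \in R^*$. For the disjointness half, I would suppose $U \in H^* \cap K^*$ for self-rooted $H$ and $K$, so that $U$ is finitely generated and of finite index in each; then $H$ and $K$ are both finite-index overgroups of $U$, hence both sit inside $\textrm{root}_G(U)$, and since $\textrm{root}_G(U)$ has finite index over $H$ (respectively $K$) while $H$ (respectively $K$) is self-rooted, one obtains $\textrm{root}_G(U) = H = K$, and so $H^* = K^*$.

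\smallskip

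Given Theorem \ref{mainDem}, the argument is essentially formal and I do not expect any serious obstacle; the genuinely substantive content is already packaged into the existence of roots (Theorem \ref{mainDem}(2)) and the Greenberg-Stallings property (Theorem \ref{mainDem}(1)), so the proposition is really just a lattice-theoretic reformulation of those two results.
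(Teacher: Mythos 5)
Your proof is correct and is exactly the routine verification the paper has in mind: Proposition \ref{lattice1} (like its analogue, Proposition \ref{lattice}) is stated in the paper without proof, the entire content being packaged into the existence of roots and the Greenberg--Stallings property from Theorem \ref{mainDem}, which is precisely how you deploy them (root existence giving that $\mathrm{root}_G(H)$ is self-rooted and that $\mathrm{root}_G(U)$ collapses onto any self-rooted finite-index overgroup of $U$). Your remark that the trivial subgroup must be set aside in part (a) --- since $G$ is torsion-free, $1$ is self-rooted and $1^*=\{1\}$ does have a least element --- is a legitimate, if minor, correction to the literal statement.
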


\section{Abstract limit groups}
In \cite{Rosset}, as we mentioned in the introduction, Rosset proved that every finitely generated subgroup of a free group has a root. The following theorem generalizes this result to the class of abstract limit groups.

\begin{thm}\label{last}
Let $G$ be an abstract limit group. If $H$ is a finitely generated subgroup of $G$, then $H$ has a root in $G$.
\end{thm}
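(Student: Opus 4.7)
The strategy is to mimic the argument used in the proof of part (2) of Theorem \ref{main}, replacing the pro-$p$ ingredients by their abstract counterparts: Greenberg--Stallings for abstract limit groups is due to Nikolaev and Serbin \cite{Serbin}; the non-positivity of $\chi$ with equality if and only if the group is abelian is due to Kochloukova \cite{Kochloukova1}; and the commutative transitivity plus torsion-freeness of limit groups guarantees that every maximal abelian subgroup is finitely generated free abelian and isolated. Split according to whether $H$ is abelian.

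If $H$ is abelian, let $M$ be the unique maximal abelian subgroup of $G$ containing $H$; it is finitely generated free abelian, say $M\cong \mathbb{Z}^n$. Any subgroup $K\leq G$ with $H\leq K$ and $|K:H|<\infty$ must lie in $M$: for each $k\in K$ some power $k^\ell$ lies in $H\leq M$, and since $M$ is isolated in $G$ this forces $k\in M$. Hence the isolator $\textrm{Iso}_M(H)=\{m\in M\mid m^\ell\in H\text{ for some }\ell\geq 1\}$, which is a direct summand of $M$ of the same rank as $H$ and therefore contains $H$ with finite index, also contains every such $K$, so it is $\textrm{root}_G(H)$.

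If $H$ is non-abelian, then $H$ is itself a non-abelian limit group, hence by \cite{Kochloukova1} one has $\chi(H)\leq -1$. For any $K$ with $H\leq K\leq G$ and $|K:H|<\infty$ the group $K$ is likewise a non-abelian limit group, so $\chi(K)\leq -1$; multiplicativity of the Euler--Poincar\'e characteristic then gives $\chi(H)=|K:H|\chi(K)$, whence $|K:H|\leq -\chi(H)$. Among all such $K$, choose $R$ maximizing $\chi(K)$ (equivalently, minimizing $|K:H|$). If some $K$ with $H\leq K$ and $|K:H|<\infty$ were not contained in $R$, then $H$ would be of finite index in both $R$ and $K$, hence in $R\cap K$; by the Greenberg--Stallings property for abstract limit groups \cite{Serbin}, $H$ would have finite index in $\langle R,K\rangle$, producing a group in the same family strictly containing $R$ and with strictly larger Euler characteristic, contradicting the choice of $R$. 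Therefore $R=\textrm{root}_G(H)$.

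The main obstacle I anticipate is the abelian case: one must ensure that a finite-index overgroup of $H$ cannot escape the maximal abelian subgroup containing $H$, which is precisely where isolation of maximal abelian subgroups (a consequence of commutative transitivity and torsion-freeness in limit groups) is crucial. Once this is in place, the non-abelian case is a direct translation of the pro-$p$ argument; the only real inputs are Kochloukova's Euler characteristic dichotomy \cite{Kochloukova1} and the Nikolaev--Serbin theorem \cite{Serbin}, both of which are available.
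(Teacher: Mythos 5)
Your proposal matches the paper's proof, which likewise reduces to the argument of part (2) of Theorem \ref{main}: the non-abelian case via negativity and multiplicativity of $\chi$ (Kochloukova) together with the Nikolaev--Serbin Greenberg--Stallings property, and the abelian case via the structure of finitely generated abelian overgroups (you phrase this through isolators in the maximal abelian subgroup, the paper through Zorn's lemma and elementary divisors, but the content is the same). One small slip: since $\chi(K)=\chi(H)/|K:H|$ with $\chi(H)<0$, maximizing $\chi(K)$ corresponds to \emph{maximizing} $|K:H|$, not minimizing it; this does not affect your argument, which only uses maximality of $\chi(R)$.
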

\begin{proof}
 We only need to mention that by Theorem 6 in \cite{Serbin}, abstract limit groups satisfy the Greenberg-Stallings property and by Lemma 5 in \cite{Kochloukova1}, non-abelian abstract limit groups have negative Euler characteristic. The rest of the proof is similar to the proof of part (2) of Theorem \ref{main}.
\end{proof}

By the above theorem  and Theorem 1 in \cite{Bridson1} we have the following.

\begin{cor}
Let $G$ be an abstract limit group. If $H$ is a finitely generated non-abelian subgroup of $G$, then the normalizer tower of $H$ in $G$ stabilizes after finitely many steps, i.e., it has finite length.
\end{cor}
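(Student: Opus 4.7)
The plan is to combine the existence of roots (Theorem \ref{last}) with the Bridson-type bound on indices of normalizers (Theorem 1 in \cite{Bridson1}), and then observe that a single finite lattice traps the entire tower. First I would set $R=\textrm{root}_G(H)$, which exists by Theorem \ref{last}, and record that $|R:H|<\infty$ by definition of a root. I would also invoke Theorem 1 in \cite{Bridson1} in the following form: for every finitely generated non-abelian subgroup $K$ of an abstract limit group, $|N_G(K):K|<\infty$.

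Next I would show by transfinite induction that $N^\alpha_G(H)\leq R$ for every ordinal $\alpha$. The base case $N^0_G(H)=H\leq R$ is immediate. For a successor step, assume $N^\alpha_G(H)\leq R$. Since $H\leq N^\alpha_G(H)\leq R$ and $|R:H|<\infty$, the group $N^\alpha_G(H)$ is finitely generated and has finite index over $H$; in particular, it contains the non-abelian group $H$ and is therefore itself non-abelian. Theorem 1 in \cite{Bridson1} then yields $|N^{\alpha+1}_G(H):N^\alpha_G(H)|<\infty$, hence $|N^{\alpha+1}_G(H):H|<\infty$, and by the defining property of the root this forces $N^{\alpha+1}_G(H)\leq R$. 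For a limit ordinal $\alpha$ the union $N^\alpha_G(H)=\bigcup_{\beta<\alpha}N^\beta_G(H)$ is contained in $R$ as well.

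Finally, since $|R:H|<\infty$, there are only finitely many subgroups of $G$ lying between $H$ and $R$. The normalizer tower is ascending and entirely contained in this finite lattice, so it must stabilize after finitely many steps. There is no real obstacle in this argument; the only point requiring a moment's care is verifying that the non-abelianness propagates up the tower, which is automatic because each $N^\alpha_G(H)$ contains the non-abelian subgroup $H$, keeping the hypotheses of Theorem 1 in \cite{Bridson1} in force at every successor stage.
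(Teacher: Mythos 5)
Your argument is correct and is precisely the one the paper intends: the corollary is stated there as an immediate consequence of Theorem \ref{last} (existence of roots) and Theorem 1 of \cite{Bridson1}, and your transfinite induction showing the whole tower is trapped in $\textrm{root}_G(H)$, followed by the finiteness of the index $|\textrm{root}_G(H):H|$, is the natural way to fill in the details. The only point worth double-checking — that non-abelianness and finite generation propagate up the tower so that the Bridson--Howie theorem applies at each successor stage — is handled correctly in your write-up.
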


Finally, let us note that the result of Proposition \ref{lattice} also holds for abstract limit groups.

\bigskip

\ackn This work was carried out while the first author was holding a CNPq Postdoctoral Fellowship at the University of Bras\'{\i}lia. He would like to thank CNPq for the financial support and the Department of Mathematics at the University of Bras\'{\i}lia for its warm hospitality and the excellent research environment.

\bibliographystyle{plain}

\end{document}